\def\o{{\omega}}
\def\O{{\mathcal{O}}}
\def\ol{\overline}
\theoremstyle{plain}
\newtheorem{thm}{Theorem}[section] 
\newtheorem{prop}[thm]{Proposition}
\newtheorem{lem}[thm]{Lemma}
\theoremstyle{definition} 
\newtheorem{defn}[thm]{Definition}
\newtheorem{notation}[thm]{Notation}
\theoremstyle{remark}
\newtheorem{eg}[thm]{Example} 
\newtheorem{rem}[thm]{Remark}
\newtheorem{obs}[thm]{Observation}
\newtheorem{cl}[thm]{Claim}
\newtheorem*{acknowledgement}{Acknowledgments}
\title{When is the Albanese morphism an algebraic \\fiber space in positive characteristic?}
\author{Sho Ejiri}
\address{Department of Mathematics, Graduate School of Science, Osaka University, Toyonaka, Osaka 560-0043, Japan}
\email{s-ejiri@cr.math.sci.osaka-u.ac.jp}
\begin{document}
\tolerance = 9999
\maketitle
\markboth{SHO EJIRI}{} 

\begin{abstract}
In this paper, we study the Albanese morphisms in positive characteristic.
We prove that the Albanese morphism of a variety with nef anti-canonical divisor is an algebraic fiber space, 
under the assumption that the general fiber is $F$-pure. 
Furthermore, we consider a notion of $F$-splitting for morphisms, and investigate it in the case of Albanese morphisms.
We show that an $F$-split variety has $F$-split Albanese morphism, and that the $F$-split Albanese morphism is an algebraic fiber space. 
As an application, we provide a new characterization of abelian varieties.
\end{abstract}
\section{Introduction} \label{section:intro} 
The Albanese morphism is an important tool 
in the study of a variety with non-positive Kodaira dimension. 
In characteristic zero, Kawamata proved that 
the Albanese morphism of a smooth projective variety with Kodaira dimension zero 
is an algebraic fiber space~\cite[Theorem~1]{Kaw81}. 
Zhang showed that the same holds in the case when the anti-canonical divisor is nef~\cite[Corollary~2]{Zha05}. 
In the same case, Lu, Tu, Zhang and Zheng proved that 
the Albanese morphism is a flat morphism with reduced fibers~\cite{LTZZ}, 
and Cao showed recently that it is actually locally isotrivial~\cite{Cao16}. 
In positive characteristic, Hacon and Patakfalvi proved that the Albanese morphism of a smooth projective variety $X$ 
is surjective if the $S$-Kodaira dimension $\kappa_S(X)$ of $X$ is zero~\cite[Theorem 1.1.1]{HP16a} (cf.~\cite{HPZ19}).
Here, the $S$-Kodaira dimension is a positive characteristic analogue of the usual Kodaira dimension. 
Wang showed that the Albanese morphism of a threefold with semi-ample anti-canonical divisor 
is surjective if the general fiber is $F$-pure~\cite{Wan16}. 

In this paper, we generalize his result to varieties of arbitrary dimension, 
which can be viewed as a positive characteristic counterpart of the above result of Zhang. 
\begin{thm}\label{thm:nef_intro}
Let $X$ be a normal projective variety over an algebraically closed field of characteristic $p>0$, 
and $\Delta$ an effective $\mathbb{Q}$-Weil divisor on $X$ such that 
$-m(K_X+\Delta)$ is a nef Cartier divisor for an integer $m>0$ not divisible by $p$. 
Let $a:X\to A$ denote the Albanese morphism of $X$, and $X_{\ol\eta}$ the geometric generic fiber over the image of $a$.
If $(X_{\ol\eta},\Delta|_{X_{\ol\eta}})$ is $F$-pure, then $a$ is an algebraic fiber space.
\end{thm}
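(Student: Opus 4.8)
The plan is to analyze the Albanese morphism $a:X\to A$ by factoring it through its Stein factorization and studying the behavior of the pair $(X,\Delta)$ along the fibers. Write $a=g\circ f$ where $f:X\to Y$ has connected fibers and $g:Y\to A$ is finite. Since $A$ is an abelian variety, $Y$ is normal, and $g$ is finite onto the image $Z=a(X)\subseteq A$, the first goal is to show $g$ is an isomorphism onto $Z$ (equivalently, that $a$ already has connected fibers), and separately that $a$ is separable. The key tool is the theory of trace maps of Frobenius and the $F$-positivity of $-m(K_X+\Delta)$: because $-m(K_X+\Delta)$ is nef and $p\nmid m$, one expects that for suitable $e\gg 0$ the sheaf $F^e_*\mathcal{O}_X(\lceil(p^e-1)\Delta\rceil)$ carries enough sections mapping onto $\mathcal{O}_X$ via the relative trace to control the pushforwards $f_*$ of these sheaves.

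The main steps, in order: (1) Reduce to studying $f:X\to Y$ by showing separability and connectedness of $a$ can be checked on $f$ together with étaleness of $g$ — use that $A$ has no nontrivial finite connected étale or wildly ramified covers in the relevant sense, and that the universal property of the Albanese forces $Y\to A$ to factor any map to an abelian variety. (2) Apply $f_*$ to the $e$-th Frobenius trace map $F^e_*\mathcal{O}_X(\lceil(1-p^e)(K_X+\Delta)\rceil)\to\mathcal{O}_X$; using that $(X_{\ol\eta},\Delta|_{X_{\ol\eta}})$ is $F$-pure, this trace is surjective on the geometric generic fiber, hence $f_*$ of it is generically surjective on $Y$. (3) Combine this with the nefness of $-m(K_X+\Delta)$ to show that $f_*F^e_*\mathcal{O}_X(\cdots)$, pushed further down to $A$ via $g$, produces a sheaf on $A$ admitting $\mathcal{O}_A$ as a quotient after twisting by numerically trivial line bundles, which by the structure of sheaves on abelian varieties (e.g. a semipositivity/GV-type argument, or the results invoked from \cite{HP16a}) forces $g_*\mathcal{O}_Y$ to contain $\mathcal{O}_A$ as a direct summand compatibly — yielding that $\deg g=1$, i.e.\ connectedness. (4) For separability: if $a$ were inseparable, it would factor through a relative Frobenius, contradicting the generic surjectivity of the trace map obtained from $F$-purity of the geometric generic fiber, since an inseparable $a$ forces $\Omega_{X/A}$ to have positive generic rank in a way incompatible with the fiber being $F$-pure of the expected dimension.

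The hard part will be Step (3): controlling $g_*\bigl(f_*F^e_*\mathcal{O}_X(\lceil(1-p^e)(K_X+\Delta)\rceil)\bigr)$ on the abelian variety $A$ and extracting from nefness of $-m(K_X+\Delta)$ the positivity needed to split off $\mathcal{O}_A$. In characteristic zero this is where Zhang uses the Fujita–Kawamata semipositivity of $f_*\omega_{X/Y}^{\otimes}$-type sheaves together with the fact that semipositive sheaves on abelian varieties that are numerically trivial on fibers are nef; in positive characteristic the analogous semipositivity is delicate and is precisely where the $F$-purity hypothesis enters, via an argument producing enough Frobenius-stable sections. I expect the proof to proceed by a careful limit argument over $e$, showing that the trace maps become "eventually surjective" along $f$ in a numerical sense, and then invoking the structure theory of (weakly positive or GV) sheaves on abelian varieties to conclude that the only way this is compatible with $\deg g>1$ is a contradiction. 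Handling the $\Q$-divisor $\Delta$ and the rounding $\lceil(1-p^e)(K_X+\Delta)\rceil$ uniformly in $e$ will require the standard but somewhat technical $F$-pure perturbation estimates.
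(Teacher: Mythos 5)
Your plan is in the same spirit as the paper (pushforwards of Frobenius trace maps plus positivity on the base), but as written it has a genuine gap: the two steps that carry all the content are only gestured at, not proved. First, your Step (3) — extracting from nefness of $-m(K_X+\Delta)$ and $F$-purity of the geometric generic fiber enough positivity downstairs to force the base to have trivial canonical sheaf — is exactly the weak positivity theorem of \cite{Eji16} (Theorem \ref{thm:wp} in the paper): for $f:X\to Z$ with $-(K_X+\Delta)$ nef and $(X_{\ol\eta},\Delta|_{X_{\ol\eta}})$ $F$-pure, $\O_Z(-nK_Z)$ is weakly positive; combined with the generic surjection $\Omega_A^1|_Z\to\Omega_Z^1$ (which gives a nonzero section of $\o_Z$) and the elementary Lemma \ref{lem:wp}, this yields $\o_Z\cong\O_Z$. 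Your ``careful limit argument over $e$'' is essentially a promise to reprove that theorem; without it (or a citation to it) the argument does not close. Second, you never actually address surjectivity of $a$ onto $A$, which is a separate and genuinely hard point in characteristic $p$: your splitting of $\O_A$ off $g_*\O_Y$ only makes sense once the image is already all of $A$, and if $Z=\mathrm{Im}(a)\subsetneq A$ then $g_*\O_Y$ is supported on $Z$ and no such splitting can exist, so the argument is circular. The paper gets surjectivity from Hacon--Patakfalvi's theorem (\cite[Theorem 0.2]{HP16b}, quoted as Theorem \ref{thm:HP}), applied after $\o_Z\cong\O_Z$ gives $\kappa(Z,K_Z)=0$; the ``GV-type structure on abelian varieties'' you allude to is precisely the content of that nontrivial theorem, not something that follows formally from your setup.

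Two smaller points. Your Step (1) claim that ``$A$ has no nontrivial finite connected \'etale covers in the relevant sense'' is false (isogenies abound); the correct mechanism, used in the paper, is: after showing the finite part $h:Y\to A$ of the Stein factorization is \'etale (ramification divisor vanishes since $\o_Y\cong\O_Y\cong h^*\o_A$, then Zariski--Nagata purity), one invokes \cite[Section 18, Theorem]{Mum70} to see $Y$ is an abelian variety, and the universal property of the Albanese forces $h$ to be an isomorphism. And your Step (4) on separability is more complicated than needed and not quite an argument: since the geometric generic fiber is $F$-pure it is reduced, and reducedness of the geometric generic fiber already implies $a$ (hence $h$) is separable; no discussion of $\Omega_{X/A}$ is required. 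So the overall route matches the paper's, but the proposal as it stands defers the two decisive inputs (weak positivity of $f_*$-type sheaves under $F$-purity, and the positive-characteristic Albanese surjectivity theorem) rather than supplying them.
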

We also study the relationship between the Albanese morphism of $X$ and Frobenius splittings on $X$. 
The notion of a Frobenius split variety was introduced by Mehta and Ramanathan~\cite{MR85}, 
which is considered to be closely related to a variety of Calabi--Yau type~\cite{GOST,GT16,Oka17,SS10}.  
We consider a generalization of this notion to pairs $(f,\Gamma)$ 
consisting of a morphism $f:V\to W$ between varieties 
and an effective $\mathbb{Q}$-Weil divisor $\Gamma$ on $V$ (Definition~\ref{defn:relFsp}). 
In this paper, we focus on a Frobenius splitting of the Albanese morphism.
Let $X$ be a normal projective variety over an algebraically closed field $k$ of characteristic $p>0$, 
let $\Delta$ be an effective $\mathbb{Q}$-Weil divisor on $X$, 
and let $a:X\to A$ be the Albanese morphism of $X$. 
Then there is the following relationship between Frobenius splittings of $(X,\Delta)$ and that of $(a,\Delta)$:
\begin{thm}\label{thm:ch_Fsp_intro}
The pair $(X,\Delta)$ is $F$-split if and only if $(a,\Delta)$ is $F$-split and $A$ is ordinary.
\end{thm}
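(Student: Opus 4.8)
The strategy rests on factoring the iterated absolute Frobenius of $X$ through the Albanese morphism. For $e>0$ set $X_e:=X\times_{A,F^e_A}A$, let $W_e\colon X_e\to X$ and $b_e\colon X_e\to A$ be the two projections — so that $W_e$ is the base change of $F^e_A$ along $a$, and is finite and flat — and let $F_e\colon X\to X_e$ be the $e$-th relative Frobenius, which satisfies $a\circ W_e=F^e_A\circ b_e$ and $F^e_X=W_e\circ F_e$. Writing $D_e:=\lceil(p^e-1)\Delta\rceil$, the last identity gives $F^e_{X*}\O_X(D_e)=W_{e*}\bigl(F_{e*}\O_X(D_e)\bigr)$, so splitting the natural map $\O_X\to F^e_{X*}\O_X(D_e)$ decomposes into (i) splitting the relative map $\O_{X_e}\to F_{e*}\O_X(D_e)$, which (after unwinding Definition~\ref{defn:relFsp}) is exactly the assertion that $(a,\Delta)$ is $F$-split, and (ii) splitting $\O_X\to W_{e*}\O_{X_e}$. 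By flat base change along the finite flat $F^e_A$ one has $W_{e*}\O_{X_e}=a^{*}F^e_{A*}\O_A$, with $\O_X\to W_{e*}\O_{X_e}$ the pullback along $a$ of $\O_A\to F^e_{A*}\O_A$; since an abelian variety is ordinary precisely when $\O_A\to F^e_{A*}\O_A$ splits for some $e\ge1$, step (ii) is where ordinarity of $A$ enters. Throughout I reduce to the regular locus of $X$, whose complement has codimension $\ge2$, so that formation of the reflexive sheaf $\O_X(D_e)$ commutes with the pushforwards in sight.

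The ``if'' direction is then a direct composition. Suppose $(a,\Delta)$ is $F$-split, so for a suitable $e$ there is a retraction $\psi\colon F_{e*}\O_X(D_e)\to\O_{X_e}$ of the natural inclusion, and suppose $A$ is ordinary, so (enlarging $e$ if needed) $\O_A\to F^e_{A*}\O_A$ admits a retraction $\tau$. Pulling $\tau$ back along $a$ gives a retraction $a^{*}\tau\colon W_{e*}\O_{X_e}\to\O_X$ of the natural map, and $a^{*}\tau\circ W_{e*}\psi$ is then a retraction of $\O_X\to W_{e*}F_{e*}\O_X(D_e)=F^e_{X*}\O_X(D_e)$; hence $(X,\Delta)$ is $F$-split.

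For the ``only if'' direction, suppose $(X,\Delta)$ is $F$-split, with a retraction $\phi\colon F^e_{X*}\O_X(D_e)\to\O_X$ of the natural inclusion. Composing $\phi$ with the natural map $W_{e*}\O_{X_e}\to F^e_{X*}\O_X(D_e)$ produces a map $W_{e*}\O_{X_e}\to\O_X$ restricting to the identity on $\O_X$, i.e.\ a retraction of $\O_X\to W_{e*}\O_{X_e}=a^{*}F^e_{A*}\O_A$. Rather than trying to descend this along $a$, I deduce ordinarity of $A$ cohomologically: since $X$ is $F$-split, the ($p$-linear) Frobenius acts injectively, hence bijectively, on the finite-dimensional $k$-space $H^1(X,\O_X)$; and by the construction of the Albanese, $a$ induces a Frobenius-equivariant injection $H^1(A,\O_A)\hookrightarrow H^1(X,\O_X)$ (concretely $\Pic^0_A$ is identified with $(\Pic^0_X)_{\mathrm{red}}$, so on Lie algebras $H^1(A,\O_A)=\mathrm{Lie}\,\Pic^0_A\to\mathrm{Lie}\,\Pic^0_X=H^1(X,\O_X)$ is injective); hence Frobenius is injective, hence bijective, on $H^1(A,\O_A)$, i.e.\ $A$ is ordinary. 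It remains to see that $(a,\Delta)$ is $F$-split, i.e.\ to produce a retraction $\psi\colon F_{e*}\O_X(D_e)\to\O_{X_e}$ of the natural inclusion out of $\phi$ and the now-available splitting of $W_e$; this reconstruction is the delicate point.

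Indeed, I expect this last step to be the main obstacle. It is not formal: a priori one knows only that the small sheaf $\O_X$, and not the larger $\O_{X_e}$, is a direct summand of $F^e_{X*}\O_X(D_e)$, so one must genuinely use that $W_e$ is split (equivalently, that $A$ is ordinary) before one can descend $\phi$ to a relative splitting along $F_e$; carrying this out cleanly — e.g.\ using the triviality of $\omega_{F^e_A}$, hence of $\omega_{W_e}$, to identify the relevant summand — is where the care is needed, and it is also the point at which the precise formulation of Definition~\ref{defn:relFsp} matters most. A secondary, purely bookkeeping issue is tracking $\O_X(D_e)$ along the non-smooth morphisms $F_e$ and $W_e$ without ever replacing $D_e$ by a divisor on $X_e$ (which need not exist); this is handled on the regular locus of $X$.
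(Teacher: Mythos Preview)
Your ``if'' direction and your argument that $A$ is ordinary are correct and match the paper. The gap is exactly where you flag it: producing a relative splitting $F_{e*}\O_X(D_e)\to\O_{X_e}$ from the absolute one is not a formal consequence of the ordinarity of $A$, and you have not completed this step.

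The paper does not attempt to descend the splitting directly. Instead it first proves that $a_*\O_X\cong\O_A$, and only then invokes the factorization you describe (this is Proposition~\ref{prop:fgh}~(3), whose hypothesis $K_A\sim_{\Z_{(p)}}0$ is automatic). The point is that once $a_*\O_X\cong\O_A$, flat base change gives $H^0(X_e,\O_{X_e})\cong H^0(A,\O_A)=k$; then the trace map $\phi^{(e)}_{(X/A,\Delta)}$, through which the surjective $H^0(X,\phi^{(e)}_{(X,\Delta)})$ factors, is nonzero on global sections, hence surjective onto the one-dimensional target, hence split. Without connected fibres this fails: $H^0(X_e,\O_{X_e})$ can have dimension $>1$, and hitting a nonzero element is not enough to hit $1$.

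Establishing $a_*\O_X\cong\O_A$ is where the real work lies, and it requires input you have not used. The paper takes the Stein factorization $X\to Z\to A$ and shows $Z\cong A$ as follows: $Z$ is $F$-split (Proposition~\ref{prop:fgh}~(1)); the injection $\Pic^0(A)[p^e]\hookrightarrow\Pic^0(Z)[p^e]$ together with the Krull--Schmidt count of Lemma~\ref{lem:inv_lb} applied to $F^e_{Z*}\O_Z$ forces $\dim Z=\dim A$ and $F^e_{Z*}\O_Z\cong\bigoplus_{\L\in\Pic(Z)[p^e]}\L$; hence $Z$ is smooth with $\omega_Z$ torsion, and the Sannai--Tanaka characterization (Theorem~\ref{thm:ST}) identifies $Z$ as an ordinary abelian variety, so $Z\to A$ is an isomorphism. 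None of this is visible from your sheaf-theoretic factorization alone; the line-bundle count and Theorem~\ref{thm:ST} are the missing ingredients.
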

We study the Albanese morphism $a$ under the assumption that $(a,\Delta)$ is locally $F$-split (Definition~\ref{defn:relFsp}), 
which is weaker than the assumption that the pair is $F$-split.
For instance, a flat morphism with normal $F$-split fibers is locally $F$-split, but not necessarily $F$-split. 
The next theorem shows that, if $(a,\Delta)$ is locally $F$-split, 
then $a$ is an algebraic fiber space whose fibers satisfy certain geometric properties.
\begin{thm}\label{thm:albf_intro}
Assume that $(a,\Delta)$ is locally $F$-split  
and that $m\Delta$ is Cartier for an integer $m>0$ not divisible by $p$. 
Then the following hold:
\begin{itemize}
\item[$(1)$] The morphism $a$ is an algebraic fiber space. 
\item[$(2)$] The support of $\Delta$ does not contain any irreducible component of any fiber. 
\item[$(3)$] For every scheme-theoretic point $z\in A$, the pair $(X_{\ol z},\Delta_{\ol z})$ is $F$-split, 
where $X_{\ol z}$ is the geometric fiber over $z$. In particular, $X_{\ol z}$ is reduced.
\item[$(4)$] The morphism $a$ is smooth in codimension one. 
In other words, there exists an open subset $U$ of $X$ such that $\mathrm{codim}(X\setminus U)\ge 2$ and $a|_U:U\to A$ is a smooth morphism. 
In particular, the general fiber of $a$ is normal.  
\end{itemize}
\end{thm}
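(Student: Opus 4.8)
The plan is to extract all four assertions from the splitting of the relative Frobenius that the hypothesis supplies. By Definition~\ref{defn:relFsp}, after passing to a member of a suitable affine open cover of $A$ we may assume there is an $\O_{X^{(1)}}$-linear map $\phi\colon F_{X/A\,*}\O_X(\lceil(p-1)\Delta\rceil)\to\O_{X^{(1)}}$ splitting the canonical inclusion, where $F_{X/A}\colon X\to X^{(1)}:=X\times_{A,F_A}A$ is the relative Frobenius; since $A$ is smooth, $F_A$ and $\pi\colon X^{(1)}\to X$ are finite flat, so $X^{(1)}$, being flat over the $S_2$ scheme $X$ with Artinian fibres, is $S_2$. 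I will use two consequences of $\phi$. \emph{(i)} Restricting $\phi$ to $F_{X/A\,*}\O_X\subseteq F_{X/A\,*}\O_X(\lceil(p-1)\Delta\rceil)$ and applying $(X^{(1)}\to A)_*$ together with $(X^{(1)}\to A)\circ F_{X/A}=a$, one gets an $\O_A$-linear splitting of $F_A^{*}a_*\O_X\to a_*\O_X$; applying $F_A^{*}$ repeatedly shows $F_A^{e*}a_*\O_X$ is, locally on $A$, a direct summand of $a_*\O_X$ for every $e\ge1$. \emph{(ii)} For $z\in A$, base change along $\Spec\kappa(z)\to A$ identifies the fibre of $F_{X/A}$ with the relative Frobenius $F_{X_z/\kappa(z)}$ and carries $\phi$ to an $\O_{X_z^{(1)}}$-linear splitting of it, provided the ceiling restricts correctly.

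\emph{Separable, surjective, connected.} For surjectivity: if $a$ were not dominant, $a_*\O_X$ would be a nonzero torsion sheaf supported on $B:=\ol{a(X)}$, so at the generic point $\eta_B$ of $B$ the stalk $(a_*\O_X)_{\eta_B}$ has finite positive length, whereas a length computation shows $(F_A^{e*}a_*\O_X)_{\eta_B}$ has length growing unboundedly in $e$; this contradicts \emph{(i)}. Hence $a$ is dominant, and, being proper, surjective. For separability: \emph{(ii)} at $z=\eta$ makes $X_\eta$ $F$-split over $\kappa(\eta)$, hence $X_{\ol\eta}$ is $F$-split, hence reduced, so $X_\eta$ is geometrically reduced and $a$ is separable. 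For connectedness: in the Stein factorization $X\to A'\xrightarrow{g}A$, the variety $A'$ is normal, finite over the abelian variety $A$, with $\kappa(A')/\kappa(A)$ separable; exactly as in the arguments of Kawamata \cite{Kaw81} and Zhang \cite{Zha05}, $g$ is then finite \'etale, hence an isomorphism by the universal property of $\Alb(X)$.

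\emph{The divisorial statements} (now $m\Delta$ Cartier, $p\nmid m$). The common engine is: \emph{the local splitting forces $X^{(1)}$ to be normal}. At a codimension-one point $\xi$ of $X$, $R:=\O_{X,\xi}$ is a DVR and $F_{X/A}^{-1}(F_{X/A}(\xi))=\{\xi\}$, so, with $S:=\O_{X^{(1)},F_{X/A}(\xi)}$, the stalk of $F_{X/A\,*}\O_X(\lceil(p-1)\Delta\rceil)$ at $F_{X/A}(\xi)$ is $S$-module isomorphic to $R$ (using that $\O_X(\lceil(p-1)\Delta\rceil)$ is locally free near $\xi$), compatibly with the inclusion of $S$, and $\phi$ makes $S$ a direct $S$-summand of $R$; since the normalization $\tilde S$ of $S$ satisfies $S\subseteq\tilde S\subseteq R$, $S$ is a direct summand of $\tilde S$, and as $S,\tilde S$ are domains with the same fraction field and $\tilde S$ is module-finite over $S$, this forces $S=\tilde S$. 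Every codimension-one point of $X^{(1)}$ arises this way, so $X^{(1)}$ is $R_1$, hence normal. Now: for \emph{(1)}, if a fibre component $Z$ lay in $\operatorname{Supp}\Delta$, say $\Delta=cZ+\Delta'$ near the generic point $\xi$ of $Z$ with $c>0$, then $\lceil(p-1)\Delta\rceil$ contributes valuation $\ge p-1$ to the source of $\phi$ along $Z$ while a local parameter of $a(\xi)$ pulls back with positive valuation along $Z$, and re-running the ``$S$ is a summand of its normalization'' estimate with these contributions yields a numerical inequality with no solution unless $c=0$. For \emph{(2)}, granting (1), $\Delta_z:=\Delta|_{X_z}$ is a well-defined effective divisor with $m\Delta_z$ Cartier and the ceiling restricts to $\lceil(p-1)\Delta_z\rceil$, so \emph{(ii)} makes $(X_z,\Delta_z)$ $F$-split over $\kappa(z)$; base change to $\ol z$ preserves this, and an $F$-split pair is in particular an $F$-split, hence reduced, scheme, so $X_{\ol z}$ is reduced.

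For \emph{(3)}, suppose the non-smooth locus of $a$ has a codimension-one component $D$, with generic point $\xi_D$ and $z=a(\xi_D)$. If $\codim_A\ol{\{z\}}=0$, normality of $X^{(1)}$ makes $\O_{X,\xi_D}\otimes_{\kappa(\eta),F}\kappa(\eta)$ regular, so by Grothendieck's criterion for geometric regularity $X_\eta$ is smooth at $\xi_D$; if $\codim_A\ol{\{z\}}=1$, then $a$ is flat at $\xi_D$ (miracle flatness, the fibre being Artinian there), and reducedness of $X_{\ol z}$ at $\xi_D$ from (2) already forces the fibre to be smooth there; if $\codim_A\ol{\{z\}}\ge2$, then $D$ lies in an excess-dimensional fibre and $\O_{A,z}\to\O_{X,\xi_D}$ drops dimension, so the Frobenius twist $S=\O_{X,\xi_D}\otimes_{\O_{A,z},F}\O_{A,z}$ cannot be normal, contradicting the engine. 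Hence $a$ is smooth in codimension one, and the general geometric fibre, being $R_1$ and $S_2$, is normal. The main obstacle is the numerical bookkeeping in \emph{(1)} and the $\codim\ge2$ case of \emph{(3)}: one must turn ``the relative Frobenius splits locally'' into a sharp statement at the generic point of an offending vertical divisor. The route I would take is to localize there, where $\O_{X,\xi}$ is a DVR and the relevant local ring of $X^{(1)}$ is an explicit $p$-twisted quotient of a two-variable ring (the toy model being the failure of $(x,y)\mapsto(x,xy)\colon\mathbb{A}^2\to\mathbb{A}^2$ to be locally $F$-split), and to prove by a Nakayama-type argument that such a ring is never a module direct summand of its normalization, tracking carefully the contribution of $\lceil(p-1)\Delta\rceil$ and of the ramification of $a$ along $Z$; establishing that the Frobenius twist of a dimension-dropping local homomorphism is non-normal is the analogous task on the vertical side.
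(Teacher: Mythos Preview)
Your surjectivity, separability, and ``$X_{A^e}$ is normal'' arguments are correct and parallel the paper's (Lemma~\ref{lem:surj}, Proposition~\ref{prop:fiber}). There is, however, a genuine gap in the connectedness step. Citing Kawamata and Zhang does not suffice: those are characteristic-zero arguments, and in characteristic $p$ a normal variety $A'$ with a finite separable morphism to an abelian variety need not be an \'etale cover (take any ramified separable cover of an elliptic curve). The missing ingredient is that the finite part $g\colon A'\to A$ of the Stein factorisation is itself an $F$-split morphism. To see this, push the split inclusion $\O_{X_{A^e}}\hookrightarrow {F_{X/A}^{(e)}}_*\O_{X^e}$ forward along the map $X_{A^e}\to A'_{A^e}:=A'\times_A A^e$ induced by $f$ (here $a=g\circ f$ with $f_*\O_X\cong\O_{A'}$), and use flat base change along $F_A^e$ to identify the result with a splitting of $\O_{A'_{A^e}}\hookrightarrow {F_{A'/A}^{(e)}}_*\O_{A'^e}$. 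Now $g$ is a finite $F$-split morphism, so ${F_{A'/A}^{(e)}}_*\O_{A'^e}$ is torsion-free of rank one on the variety $A'_{A^e}$ and contains $\O_{A'_{A^e}}$ as a direct summand; the inclusion is therefore an isomorphism, every geometric fibre of $g$ is reduced and zero-dimensional, $g_*\O_{A'}$ is locally free, and $g$ is \'etale, hence an isomorphism by the universal property of the Albanese. This is exactly the paper's route (Propositions~\ref{prop:fgh}(1) and~\ref{prop:fin}).

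For (1) and (3) you correctly flag the obstacles, but the paper sidesteps them. For (1), no valuation bookkeeping is needed: the split inclusion restricts to an injection $\O_{X_{\ol z}}\hookrightarrow {F_{X_{\ol z}/\ol z}^{(e)}}_*\bigl(\O_X((p^e-1)\Delta)|_{X_{\ol z}}\bigr)$, and since the image of $1$ vanishes along $\mathrm{Supp}\,\Delta\cap X_{\ol z}$, no component of $X_{\ol z}$ can lie in $\mathrm{Supp}\,\Delta$. For (3), once $X_{A^e}$ is normal, ${F_{X/A}^{(e)}}_*\O_{X^e}$ is torsion-free on it, hence locally free on an open $U_{A^e}$ with $\codim(X_{A^e}\setminus U_{A^e})\ge 2$; restricting to any fibre makes ${F_{U_{\ol z}/\ol z}^{(e)}}_*\O_{U_{\ol z}}$ locally free, and Kunz's theorem then gives regularity of $U_{\ol z}$, hence smoothness of $a|_U$. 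This single step replaces your three-case analysis and in particular resolves the $\codim\ge 2$ case you left open.
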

One can recover the result of Hacon and Patakfalvi when $K_X$ is numerically trivial, 
because in this case, the condition $\kappa_S(X)=0$ is equivalent to saying that $X$ is $F$-split. 
As a corollary of Theorem~\ref{thm:albf_intro}, we provide a new characterization of abelian varieties. 
\begin{thm}\label{thm:ch_ab_intro}
Assume that $(a,\Delta)$ is locally $F$-split $($resp. $(X,\Delta)$ is $F$-split$)$. Then $\dim A\le\dim X$. 
Furthermore, the equality holds if and only if $X$ is an abelian variety $($resp. ordinary abelian variety$)$ and $\Delta=0$.
\end{thm}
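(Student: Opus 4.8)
The plan is to run everything through Theorem~\ref{thm:albf_intro}, applied not to $(a,\Delta)$ but to $(a,0)$. First note that if $(a,\Delta)$ is locally $F$-split then so is $(a,0)$: a splitting $\phi$ of the defining map $\mathcal{O}_{X^{(e)}}\to F^e_{X/A*}\mathcal{O}_X(\lceil(p^e-1)\Delta\rceil)$ restricts, along the natural inclusion $F^e_{X/A*}\mathcal{O}_X\hookrightarrow F^e_{X/A*}\mathcal{O}_X(\lceil(p^e-1)\Delta\rceil)$ (an inclusion since $\Delta\ge 0$), to a splitting of $\mathcal{O}_{X^{(e)}}\to F^e_{X/A*}\mathcal{O}_X$; and in the $F$-split case, $(X,\Delta)$ being $F$-split forces $(a,\Delta)$ to be $F$-split by Theorem~\ref{thm:ch_Fsp_intro}, hence locally $F$-split. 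Since $b_1(X)=2\dim A$ and, by the first assertion of Theorem~\ref{thm:albf_intro} applied to $(a,0)$, the morphism $a$ is surjective, we get $\dim A\le\dim X$, that is, $b_1(X)\le 2\dim X$. The ``if'' direction is then immediate: if $X$ is an abelian variety then $a$ is an isomorphism and $b_1(X)=2\dim A=2\dim X$, and if moreover we are in the $F$-split case with $X$ an \emph{ordinary} abelian variety then $(X,0)$ is $F$-split (Frobenius acting bijectively on $H^1(X,\mathcal{O}_X)$).

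For the ``only if'' direction, assume $b_1(X)=2\dim X$, i.e.\ $\dim A=\dim X$. By Theorem~\ref{thm:albf_intro} applied to $(a,0)$, the map $a$ is separable, surjective, and has connected fibres; being generically finite and separable, its general fibre is a finite reduced scheme, which connectedness forces to consist of a single point, so $a$ is birational. Next, Theorem~\ref{thm:albf_intro}(3) (again for $(a,0)$, with $0$ Cartier) provides an open $U\subseteq X$ with $\codim(X\setminus U)\ge 2$ such that $a|_U$ is smooth; as $a$ is birational and $U$ is connected, $a|_U$ is smooth of relative dimension zero, hence \'etale, and an \'etale birational morphism is an open immersion (a routine consequence of Zariski's main theorem: $a|_U$ is separable of degree one with connected source, so $U\times_A U$ is \'etale over $A$ and has the diagonal as its unique irreducible component, whence $a|_U$ is a monomorphism, hence an open immersion). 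Consequently $a|_U$ is an isomorphism onto an open subset of $A$, so the exceptional locus of $a$ is contained in $X\setminus U$ and therefore has codimension $\ge 2$ in $X$.

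Now I would invoke the standard fact (Koll\'ar--Mori) that for a proper birational morphism from a normal variety to a $\Q$-factorial variety, every irreducible component of the exceptional locus has codimension one. Since $A$, being an abelian variety, is smooth and hence $\Q$-factorial, the exceptional locus of $a$ must be empty; thus $a$ is \'etale, proper, and of degree one, i.e.\ an isomorphism, and so $X$ is an abelian variety. It remains to show $\Delta=0$. Since $a$ is an isomorphism, the relative Frobenius $F^e_{X/A}$ is an isomorphism, so Definition~\ref{defn:relFsp} applied to $(a,\Delta)$ unwinds to the requirement that the canonical inclusion $\mathcal{O}_A\hookrightarrow\mathcal{O}_A(\lceil(p^e-1)\Delta\rceil)$ split Zariski-locally on $A$; as $\Delta$ is effective this is impossible unless $\lceil(p^e-1)\Delta\rceil=0$, i.e.\ unless $\Delta=0$. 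In the $F$-split case, Theorem~\ref{thm:ch_Fsp_intro} additionally yields that $A$ is ordinary, which completes the characterization.

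The step I expect to be the main obstacle is the passage from ``$a$ is birational'' to ``$a$ is an isomorphism'': one must combine the codimension-one smoothness furnished by Theorem~\ref{thm:albf_intro}(3) with the factoriality of the abelian variety $A$ to see that $a$ contracts nothing (a priori a birational morphism from a normal variety onto an abelian variety could still be, e.g., a blow-up). The remaining ingredients---the reduction to the pair $(a,0)$, the extraction of birationality from separable connected fibres of a generically finite morphism, and reading off $\Delta=0$ from the definition of $F$-splitting for an isomorphism---should be essentially formal.
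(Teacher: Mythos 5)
Your proof is correct, but it takes a genuinely different route from the paper at the decisive step. The paper disposes of the equality case in one line by invoking its Proposition \ref{prop:fin}: since $\dim X=\dim A$ and $(a,\Delta)$ is locally $F$-split, that proposition (whose proof is a short internal argument -- the relative Frobenius $F^{(e)}_{X/A}$ has degree one, so ${F^{(e)}_{X/A}}_*\O_{X^e}(\lceil(p^e-1)\Delta\rceil)$ is torsion-free of rank one and the split injection from $\O_{X_{A^e}}$ must be an isomorphism) yields directly that $a$ is \'etale and $\Delta=0$; combined with $a_*\O_X\cong\O_A$ from Theorem \ref{thm:albf_intro} this forces $a$ to be an isomorphism. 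You instead never use the splitting itself beyond what Theorem \ref{thm:albf_intro} records: you extract birationality from separability plus connected fibres, upgrade the codimension-one smoothness of part (3) to ``$a$ is an isomorphism outside a closed set of codimension $\ge 2$'' via Zariski's main theorem, and then kill the exceptional locus using the external purity statement that a proper birational morphism from a normal variety onto a $\Q$-factorial (here smooth) variety has exceptional locus of pure codimension one; finally you recover $\Delta=0$ by unwinding Definition \ref{defn:relFsp} once $a$ (hence $F^{(e)}_{X/A}$) is an isomorphism, and you handle the $F$-split case through Theorem \ref{thm:ch_Fsp_intro} exactly as the paper implicitly does. Your preliminary reduction to $(a,0)$ (Remark \ref{rem:relFsp}(3)) is a sensible way to meet the Cartier hypothesis of Theorem \ref{thm:albf_intro}(3). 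The trade-off: the paper's argument is shorter, stays entirely inside its own toolkit, and gets \'etaleness (hence finiteness of $a$) without ever needing to rule out a divisorial contraction; your argument imports Koll\'ar--Mori purity and ZMT but is more geometric and only consumes the stated conclusions of Theorem \ref{thm:albf_intro}, which makes it robust if one only remembers that theorem and not Proposition \ref{prop:fin}. Both are complete; just make sure, when citing the purity fact, that the notion of exceptional locus used there (points where $a$ fails to be a local isomorphism) matches your containment $\mathrm{Exc}(a)\subseteq X\setminus U$, which it does since $a|_U$ is an open immersion.
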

Applying this theorem, we also give a necessary and sufficient condition 
for a normal projective variety to have $F$-split Albanese morphism (Theorem~\ref{thm:nscond}).
We conclude this paper with a classification of minimal surfaces 
whose Albanese morphisms are $F$-split or locally $F$-split (Theorem~\ref{thm:surf}). 
\begin{small}
\begin{acknowledgement}
The author wishes to express his gratitude to his supervisor Professor Shunsuke Takagi for suggesting problems, 
valuable comments and helpful advice. 
He is deeply grateful to Professors Zsolt Patakfalvi and Yoshinori Gongyo for fruitful discussions and valuable comments.
He would like to thank Professors Osamu Fujino, Nobuo Hara and Doctor Yuan Wang for stimulating discussions, questions and comments.
He also would like to thank the reviewer for a careful reading and helpful suggestions. 
Part of this work was carried out during his visit to Princeton University with support from 
The University of Tokyo/Princeton University Strategic Partnership Teaching and Research Collaboration Grant, 
and from the Program for Leading Graduate Schools, MEXT, Japan. 
He was also supported by JSPS KAKENHI Grant Number 15J09117.
He would like to thank Professor Adrian Langer for pointing out an error in Proposition~\ref{prop:fgh}. 
\end{acknowledgement}
\end{small}
\section{Notation and conventions} \label{section:notat} 
Throughout this paper, we fix an algebraically closed field $k$ of characteristic $p>0$. 
By \textit{$k$-scheme}, we mean a separated scheme of finite type over $\mathrm{Spec}\,k$. 
An integral $k$-scheme is called a \textit{variety over $k$}. 
Let $X$ be a normal variety. 
A Weil divisor $D$ is said to be $\mathbb{Q}$-Cartier (resp. $\mathbb{Z}_{(p)}$-Cartier) if 
$mD$ is Cartier for some $0<m\in\mathbb{Z}$ (resp. $0<m\in \mathbb{Z}\setminus p\mathbb{Z}$). 
Here, $\mathbb{Z}_{(p)}$ is the localization of $\mathbb{Z}$ at $(p)=p\mathbb{Z}$. 
Let $\varphi:S\to T$ be a morphism of schemes and $T'$ a $T$-scheme. 
Then, $S_{T'}$ and $\varphi_{T'}:S_{T'}\to T'$ denote respectively 
the fiber product $S\times_{T}T'$ and its second projection. 
For a Cartier, $\mathbb{Z}_{(p)}$-Cartier or $\mathbb{Q}$-Cartier divisor $D$ on $S$ (resp. an $\O_S$-module $\mathcal G$), 
the pullback of $D$ (resp. $\mathcal G$) to $S_{T'}$
is written as $D_{T'}$ (resp. $\mathcal G_{T'}$),  
if it is well-defined. 
Similarly, for a homomorphism $\alpha:\mathcal F\to\mathcal G$ of $\O_S$-modules, 
$\alpha_{T'}:\mathcal F_{T'}\to \mathcal G_{T'}$ is the pullback of $\alpha$ to $S_{T'}$. 
Let $f:X\to Z$ be a morphism between $k$-schemes. 
Then, $F_X$ denotes the absolute Frobenius morphism of $X$. 
The source of $F_X^e$ is often written as $X^e$. 
The morphism $f:X\to Z$ is denoted by $f^{(e)}:X^e\to Z^e$ 
when we regard $X$ and $Z$ as $X^e$ as $Z^e$, respectively. 
We write the induced morphism $(F_X^e, f^{(e)}):X^e\to X\times_{Z} Z^e=:X_{Z^e}$ as $F^{(e)}_{X/Z}$.
\section{Trace maps of relative Frobenius morphisms} \label{section:prelim} 
In this section, 
given a morphism between varieties over an algebraically closed field $k$, 
we consider the relative Frobenius morphism and its trace map. 
\subsection{Base change by Frobenius morphisms}\label{subsection:bc}
Let $f:X\to Z$ be a morphism (not necessarily surjective) between $k$-schemes. 
For each integer $e>0$, we have the following diagram:
$$ 
\xymatrix@R=25pt@C=25pt{ 
X^e \ar[dr]^{F_{X}^{e}} \ar[d]_{F_{X/Z}^{(e)}} & \\ 
X_{Z^e} \ar[r]_{(F_Z^e)_X} \ar[d]_{f_{Z^e}} & X \ar[d]^{f} \\ 
Z^e \ar[r]^{F_Z^e} & Z}
$$
We first consider the properties of $X_{Z^e}$ when $Z$ is a smooth variety. 
\begin{lem}\label{lem:bc}
With the notation above, assume that $Z$ is a smooth variety. 
\begin{itemize}
\item[$($1$)$] If $X$ is a Gorenstein $k$-scheme of pure dimension, then so is $X_{Z^e}$. 
Furthermore, the dualizing sheaf $\o_{X_{Z^e}}$ of $X_{Z^e}$ is isomorphic to 
${f_{Z^e}}^*\o_{Z^e}^{1-p^e}\otimes(\o_X)_{Z^e}$, 
where $\o_X$ is the dualizing sheaf of $X$. 
\item[$($2$)$] Suppose that $f$ is dominant and separable. 
If $X$ is a variety, then so is $X_{Z^e}$.
\end{itemize}
\end{lem}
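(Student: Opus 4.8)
The plan is to prove the two parts separately; the common observation is that, since $Z$ is a smooth variety over the perfect field $k$, the iterated Frobenius $F_Z^e\colon Z^e\to Z$ is finite (because $Z$ is $F$-finite) and flat (Kunz's theorem), and hence so is its base change $(F_Z^e)_X\colon X_{Z^e}\to X$; moreover $F_Z^e$ is a universal homeomorphism, being integral, surjective and radicial, so $(F_Z^e)_X$ is a homeomorphism and $X_{Z^e}$ has the same underlying topological space as $X$, in particular the same dimension at each point.

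For (1), I would first note that $\o_{Z^e}$ is invertible because $Z^e\cong Z$ is regular, so the relative dualizing sheaf $\o_{Z^e/Z}:=\o_{Z^e}\otimes(F_Z^e)^*\o_Z^{-1}$ is invertible; since $(F_Z^e)^*\o_Z\cong\o_Z^{\otimes p^e}$ (pullback of a line bundle along an $e$-th Frobenius), this gives $\o_{Z^e/Z}\cong\o_{Z^e}^{\otimes(1-p^e)}$. Then I would invoke the compatibility of the relative dualizing sheaf of a finite flat morphism with base change to obtain $\o_{X_{Z^e}/X}\cong f_{Z^e}^*\o_{Z^e/Z}\cong f_{Z^e}^*\o_{Z^e}^{1-p^e}$, still invertible. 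Since $X$ is Cohen--Macaulay of pure dimension and $(F_Z^e)_X$ is finite flat, $X_{Z^e}$ is Cohen--Macaulay of pure dimension $\dim X$, and
\[
\o_{X_{Z^e}}\;\cong\;\o_{X_{Z^e}/X}\otimes(F_Z^e)_X^*\o_X\;\cong\;f_{Z^e}^*\o_{Z^e}^{1-p^e}\otimes(\o_X)_{Z^e}.
\]
As $X$ is Gorenstein, $\o_X$ is invertible, so the right-hand side is invertible; a Cohen--Macaulay scheme with invertible dualizing sheaf is Gorenstein, which finishes (1).

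For (2), since the assertion is local on $Z$ and on $X$, I would reduce to $Z=\Spec S$ with $S$ a regular domain of finite type over $k$ and to an affine open $\Spec R\subseteq X$ with $R$ a domain; dominance of $f$ makes $S\hookrightarrow R$ injective, and separability of $f$ means $\mathrm{Frac}(R)/\mathrm{Frac}(S)$ is separably generated. The corresponding open of $X_{Z^e}$ is $\Spec(R\otimes_S S^{1/p^e})$, where $S^{1/p^e}$ denotes $S$ regarded as an $S$-algebra via $F_S^e$. Since $S^{1/p^e}$ is flat over $S$, the natural map $R\otimes_S S^{1/p^e}\hookrightarrow\mathrm{Frac}(R)\otimes_S S^{1/p^e}=\mathrm{Frac}(R)\otimes_{\mathrm{Frac}(S)}\mathrm{Frac}(S)^{1/p^e}$ is injective; the target is a field because $\mathrm{Frac}(R)/\mathrm{Frac}(S)$ is separably generated while $\mathrm{Frac}(S)^{1/p^e}/\mathrm{Frac}(S)$ is finite purely inseparable. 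Hence $R\otimes_S S^{1/p^e}$ is a domain, so $X_{Z^e}$ is reduced; being also irreducible (it is homeomorphic to $X$), it is integral, and, being of finite type and separated over $k$, it is a variety.

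The step I expect to be the main obstacle is, in (1), making precise the base-change compatibility of the relative dualizing sheaf and the identity $\o_{X_{Z^e}}\cong\o_{X_{Z^e}/X}\otimes(F_Z^e)_X^*\o_X$ while keeping careful track of which scheme each sheaf lives on; and, in (2), the algebraic fact that $\mathrm{Frac}(R)\otimes_{\mathrm{Frac}(S)}\mathrm{Frac}(S)^{1/p^e}$ is a field. The latter is exactly where separability of $f$ is used; I would prove it by choosing a separating transcendence basis of $\mathrm{Frac}(R)/\mathrm{Frac}(S)$ to reduce to the standard fact that a separable algebraic extension and a purely inseparable algebraic extension are linearly disjoint over their common base field.
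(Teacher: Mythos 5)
Your proposal is correct and follows essentially the same route as the paper: flatness of $F_Z^e$ via Kunz, the base-change and composition compatibilities of (relative) dualizing sheaves to get $\o_{X_{Z^e}}\cong {f_{Z^e}}^*\o_{Z^e}^{1-p^e}\otimes(\o_X)_{Z^e}$ in (1), and in (2) the injection of the affine coordinate ring of $X_{Z^e}$ into its tensor product with the function field, where separability gives reducedness (the paper cites Conrad's duality theorems and the notion of Gorenstein morphism where you argue via Cohen--Macaulayness plus an invertible dualizing sheaf, and it stops at reducedness where you sharpen to a field, but these are only packaging differences). No gaps worth flagging.
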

\begin{proof}
We first note that since $(F_Z^e)_X$ is homeomorphic, 
if $X$ is of pure dimension (resp. irreducible), then so is $X_{Z^e}$. 
Now, since $F_Z^e$ is a Gorenstein morphism~\cite[V,\S 9]{RD}, 
the base change $(F_Z^e)_X$ has the same property, 
so $X_{Z^e}$ is a Gorenstein $k$-scheme. 
Furthermore, by~\cite[Theorems 3.6.1 and 4.3.3]{Con00}, we have 
$$
\o_{X_{Z^e}}
\cong\o_{(F_{Z}^e)_X}\otimes(\o_X)_{Z^e}
\cong{f_{Z^e}}^*\o_{F_Z^e}\otimes(\o_X)_{Z^e}
\cong{f_{Z^e}}^*\o_{Z^e}^{1-p^e}\otimes(\o_X)_{Z^e},
$$ 
which proves~(1). 
Next, we show~(2). We may assume that $X=\mathrm{Spec}\,A$ and $Z=\mathrm{Spec}\,B$. 
Let $K$ be the function field of $X$. 
The separability of $f$ implies that $F^e_*B\otimes_{B}K$ is reduced. 
Since $F_Z^e$ is flat, $F^e_*B\otimes_{B}A\to F^e_*B\otimes_{B}K$ is injective, so $F^e_*B\otimes_{B}A$ is reduced. 
\end{proof}
\subsection{Trace maps}\label{subsection:trace} \quad
 
\noindent(\ref{subsection:trace}.1) 
Let $\pi:X\to Z$ be a finite surjective morphism between Gorenstein $k$-schemes of pure dimension, 
and let $\o_X$ and $\o_Z$ be dualizing sheaves of $X$ and $Z$, respectively. 
We denote by $\mathrm{Tr}_{\pi}:\pi_*\o_X\to\o_Z$ the morphism obtained by applying the functor $\mathcal Hom_Z(\underline{\quad},\o_Z)$ 
to the natural morphism $\pi^{\#}:\O_Z\to\pi_*\O_X$. 
This is called \textit{the trace map} of $\pi$. 
 
\noindent(\ref{subsection:trace}.2) 
Let $f:X\to Z$ be a morphism (not necessarily surjective) from a Gorenstein variety $X$ to a smooth variety $Z$. 
Then it follows from Lemma \ref{lem:bc} that, 
\begin{align*}
\o_{X^e}\otimes{F_{X/Z}^{(e)}}^*\o_{X_{Z^e}}^{-1}
\cong \o_{X^e}\otimes{f^{(e)}}^*\o_{Z^e}^{p^e-1}\otimes\o_{X^e}^{-p^e}
\cong\o_{X^e/Z^e}^{1-p^e}. \label{isom:a}\tag{1}
\end{align*}
Hence, by the projection formula, we get 
$({F_{X/Z}^{(e)}}_*\o_{X^e})\otimes\o_{X_{Z^e}}^{-1}\cong {F_{X/Z}^{(e)}}_*\o_{X^e/Z^e}^{1-p^e}.$
Define $$\phi^{(e)}_{X/Z}:=\mathrm{Tr}_{F_{X/Z}^{(e)}}\otimes\o_{X_{Z^e}}^{-1}:{F_{X/Z}^{(e)}}_*\o_{X^e/Z^e}^{1-p^e}\to\O_{X_{Z^e}}.$$
Note that we now have the following isomorphisms:
\begin{align*}
{F_{X/Z}^{(e)}}_*\o_{X^e/Z^e}^{1-p^e}
&\cong{F_{X/Z}^{(e)}}_*\mathcal Hom\left({F_{X/Z}^{(e)}}^*\o_{X_{Z^e}},\o_{X^e}\right) \hspace{50pt}\textup{{\tiny by (\ref{isom:a})}}\\
&\cong\mathcal Hom\left(\left({F_{X/Z}^{(e)}}_*\O_{X^e}\right)\otimes\o_{X_{Z^e}},\o_{X_{Z^e}}\right) \hspace{25pt}\textup{{\tiny by the Grothendieck duality}}\\
&\cong\mathcal Hom\left({F_{X/Z}^{(e)}}_*\O_{X^e},\O_{X_{Z^e}}\right). 
\end{align*}
\noindent(\ref{subsection:trace}.3) 
Let $f:X\to Z$ be a morphism from a normal variety $X$ to a smooth variety $Z$. 
Let $\iota:U\to X$ be the open immersion from the regular locus $U$ of $X$. 
We then have 
$$
{\iota_{Z^e}}_*{F_{U/Z}^{(e)}}_*\o_{U^e/Z^e}^{1-p^e}
\cong {F_{X/Z}^{(e)}}_*\o_{X^e/Z^e}^{1-p^e},
\quad\textup{and}\quad
{\iota_{Z^e}}_*\O_{U_{Z^e}} \cong\O_{X_{Z^e}}.
$$
Hence, we can define 
$$
\phi^{(e)}_{X/Z}:=\iota_*\phi^{(e)}_{U/Z}:{F_{X/Z}^{(e)}}_*\o_{X^e/Z^e}^{1-p^e}\to\O_{X_{Z^e}}.
$$
Let $K_{X/Z}$ be a Weil divisor on $X$ such that $\O_X(K_{X/Z})\cong\o_{X/Z}$. 
Let $\Delta$ be an effective $\mathbb{Q}$-Weil divisor on $X$. 
For every $e>0$, we define 
\begin{align*}
\mathcal L^{(e)}_{(X/Z,\Delta)}:
&=\O_{X^e}( \lfloor (1-p^e)(K_{X^e/Z^e}+\Delta) \rfloor )
\subseteq \O_{X^e}( (1-p^e)K_{X^e/Z^e} ),\quad \textup{and} \\
\phi^{(e)}_{(X/Z,\Delta)}:
&{F_{X/Z}^{(e)}}_*\mathcal L_{(X/Z,\Delta)}^{(e)}
\hookrightarrow {F_{X/Z}^{(e)}}_* \O_{X^e}((1-p^e)K_{X^e/Z^e})
\xrightarrow{ \phi^{(e)}_{X/Z} } \O_{X_{Z^e}}.
\end{align*}
It is easily seen that the above morphism is also obtained by applying the functor 
$\mathcal Hom_{X_{Z^e}}(\underline{\quad},\O_{X_{Z^e}})$ 
to the natural morphism 
$$\O_{X_{Z^e}}\to {F_{X/Z}^{(e)}}_*\O_{X^e}\hookrightarrow {F_{X/Z}^{(e)}}_*\O_{X^e}(\lceil(p^e-1)\Delta\rceil).$$
When $Z=\mathrm{Spec}\,k$, we may identify $Z^e$, $X_{Z^e}$ and $F_{X/Z}^{(e)}$ with $Z$, $X$ and $F_X^e$, respectively. 
In this case, we denote $\phi_{(X/Z,\Delta)}^{(e)}$ by $\phi_{(X,\Delta)}^{(e)}$.
\section{Varieties with nef anti-canonical divisor}\label{section:nef} 
In this section, we prove Theorem~\ref{thm:nef_intro} which states that 
the Albanese morphism of a normal projective variety 
with nef anti-canonical divisor is an algebraic fiber space 
if the geometric generic fiber is $F$-pure. 
Throughout this section, we work over an algebraically closed field $k$ of characteristic $p>0$. 
To begin with, we recall the \textit{Albanese morphism} of a normal projective variety $X$. 
This is a morphism $a:X\to A$ to an abelian variety $A$ (called the \textit{Albanese variety}) such that 
every morphism $b:X\to B$ to another abelian variety $B$ factors uniquely through $a$.  
The existence of the Albanese morphism for a normal projective variety is proved for instance in~\cite[\S 9]{FGA}.  
We must remark that \textbf{the above morphism is different in general 
from the Albanese map that is defined by using the Albanese morphism 
of a resolution of singularities}.    

We next recall the definition of weak positivity. 
Note that our definition is slightly different from the original one introduced by Viehweg~\cite{Vie83}.
\begin{defn}
Let $\mathcal G$ be a coherent sheaf on a normal quasi-projective variety $Y$, 
and let $\eta$ denote the generic point of $Y$. 
We say that $\mathcal G$ is \textit{weakly positive} if 
for every ample line bundle $\mathcal H$ on $Y$ and each positive number $\alpha$, 
there exists a positive number $\beta$  
such that the natural map 
$$ 
H^0\left( Y, \left( S^{\alpha\beta}\mathcal G \right)^{**} \otimes \mathcal H^{\beta} \right) 
\otimes \O_{Y,\eta}
\to \left( \left( S^{\alpha\beta}\mathcal G \right)^{**} \otimes \mathcal H^{\beta} \right)_{\eta}
$$ 
is surjective. Here, $S^{\alpha\beta}(\underline{\quad})$ and $(\underline{\quad})^{**}$ denote 
the $\alpha\beta$-th symmetric product and the double dual, respectively. 
\end{defn}
It follows from the definition that a line bundle on a projective variety is weakly positive if and only if it is pseudo-effective. 
 
Theorem~\ref{thm:nef_intro} is an application of Theorems~\ref{thm:wp} and~\ref{thm:HP} below. 
\begin{thm}[\textup{\cite{Eji19p}}]\label{thm:wp}
Let $f:X\to Y$ be a surjective morphism between normal varieties.
Let $\Delta$ be an effective divisor on $X$ such that $m(K_X+\Delta)$ is 
an integral Cartier divisor for an integer $m>0$ not divisible by $p$.
Assume that $(X_{\ol\eta},\Delta|_{X_{\ol\eta}})$ is $F$-pure, where $X_{\ol\eta}$ is the geometric generic fiber.
If $-(K_X+\Delta+f^*D)$ is a nef $\mathbb{Q}$-Cartier divisor on $X$ for some $\mathbb{Q}$-Cartier divisor $D$ on $Y$, 
then $\O_Y(-n(K_Y+D))$ is weakly positive for an integer $n>0$ such that $nD$ is integral.
\end{thm}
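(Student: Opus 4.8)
The plan is to reduce the statement, via $K_X=K_{X/Y}+f^*K_Y$, to the weak positivity of the relative adjoint sheaf $f_*\O_X\bigl(l(K_{X/Y}+\Delta+M)\bigr)$, where $M:=-(K_X+\Delta+f^*D)$, and then to establish that by means of the relative Frobenius trace maps of Section~\ref{section:prelim}. The $F$-purity of $X_{\ol\eta}$ will be used only to make these trace maps surjective over the generic point of $Y$, and the nefness of $M$ only to make the resulting push-forwards positive uniformly in the Frobenius power; in this sense the argument is a positive-characteristic analogue of Viehweg's weak positivity theorem.

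First I would carry out the standard reductions. Weak positivity of a coherent sheaf is unchanged by replacing $Y$ with a resolution (with the usual care about Grauert--Riemenschneider-type sheaves if $Y$ fails to be $\Q$-Gorenstein), by shrinking $Y$ to a dense open subset, and --- after a Stein factorisation and a finite base change, keeping track of $\Delta$ and $D$ --- by reducing to $f_*\O_X\cong\O_Y$; none of these operations changes the geometric generic fibre. Thus I may assume that $Y$ is smooth, that $f$ is flat with $f_*\O_X\cong\O_Y$, and that there is a positive integer $e_0$ prime to $p$ such that $(p^e-1)(K_{X/Y}+\Delta)$ is an integral Cartier divisor whenever $e_0\mid e$. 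Setting $N:=-(K_Y+D)$, one has $-(K_{X/Y}+\Delta)=M-f^*N$, so $f^*N\sim_{\Q}K_{X/Y}+\Delta+M$ with $M$ nef, and hence $f_*\O_X\bigl(l(K_{X/Y}+\Delta+M)\bigr)\cong\O_Y(lN)$ for every $l$ with $lN$ integral. On the smooth variety $Y$ the invertible sheaf $\O_Y(lN)$ is weakly positive in the sense of Definition~\ref{defn:wp} precisely when $N$ is pseudoeffective, so it suffices to prove that $N$ is pseudoeffective.

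For this I would use the trace maps $\phi^{(e)}_{(X/Y,\Delta)}\colon{F_{X/Y}^{(e)}}_*\L^{(e)}_{(X/Y,\Delta)}\to\O_{X_{Y^e}}$. By Lemma~\ref{lem:bc} and the construction in (\ref{subsection:trace}.3), the restriction of $\phi^{(e)}_{(X/Y,\Delta)}$ to $X_{\ol\eta}$ is the absolute trace map $\phi^{(e)}_{(X_{\ol\eta},\,\Delta|_{X_{\ol\eta}})}$, which is surjective because $(X_{\ol\eta},\Delta|_{X_{\ol\eta}})$ is $F$-pure; hence $\phi^{(e)}_{(X/Y,\Delta)}$ is surjective over a dense open $Y^0\subseteq Y$, which may be chosen independent of $e$. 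For $e_0\mid e$ one has $\L^{(e)}_{(X/Y,\Delta)}\cong\O_X\bigl((p^e-1)(M-f^*N)\bigr)$, so that, up to a fractional-part error bounded independently of $e$ by $f^*\!\operatorname{Supp}N$, twisting $\phi^{(e)}_{(X/Y,\Delta)}$ by the pullback of $\O_Y((p^e-1)N)$ gives a morphism
$${F_{X/Y}^{(e)}}_*\O_X\bigl((p^e-1)M\bigr)\ \longrightarrow\ f_{Y^e}^{*}\O_{Y^e}\bigl((p^e-1)N\bigr),$$
surjective over the preimage of $Y^0$. To gain genuine positivity I would twist further by line bundles on $X$ that are pulled back from $X_{Y^e}$ along $F_{X/Y}^{(e)}$ --- for instance by $\O_X(p^eC)\cong\O_X\bigl(F_{X/Y}^{(e)*}C_{Y^e}\bigr)$ for an ample Cartier divisor $C$ on $X$, and by the pullback of an ample divisor on $Y$ --- and then push forward to $Y$, using the projection formula, flat base change and Fujita's vanishing theorem; here the nefness of $(p^e-1)M$ is exactly what lets one fixed choice of auxiliary ample data work for all $e$, so that the higher direct images vanish and the relevant sheaves become globally generated over $Y^0$ uniformly in $e$. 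This produces, for every $e$ with $e_0\mid e$, an effective divisor in the class of $(p^e-1)N$ twisted by a fixed, $e$-independent, bounded amount; dividing by $p^e-1$ and letting $e\to\infty$ puts $N$ into the closure of the effective cone, so $N$ is pseudoeffective and, by the reduction above, the theorem follows.

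The step I expect to be the main obstacle is the last one: organising the auxiliary ample twists and the Frobenius-descent of the line bundles involved so that, after pushing forward to $Y$, one is left with an effective representative of $(p^e-1)N$ plus an error bounded uniformly in $e$. The delicate point is that an ample divisor on $Y$ pulls back only to a nef divisor on $X$, so the positivity needed to make the push-forwards globally generated must be imported through an ample divisor on $X$, which then has to be propagated through the relative Frobenius (via $p^eC\sim F_{X/Y}^{(e)*}C_{Y^e}$) without spoiling the uniform bounds. The nefness of $-(K_X+\Delta+f^*D)$ enters precisely to secure those uniform bounds through Fujita's vanishing theorem, while the $F$-purity of $X_{\ol\eta}$ is used only to produce the generic surjectivity of the trace.
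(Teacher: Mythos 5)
First, a point of comparison: the paper does not prove Theorem \ref{thm:wp} at all --- it is quoted from \cite{Eji16} (see the remark following it), so the relevant benchmark is the proof of \cite[Theorem 4.5]{Eji16}, which indeed runs on the same circle of ideas you invoke (generic surjectivity of the relative Frobenius trace coming from $F$-purity, plus a Frobenius amplification exploiting nefness). The trouble is that the step you yourself flag as ``the main obstacle'' is not a technicality but the entire content of the theorem, and your proposed mechanism for it does not work as described. To make the limit argument give pseudoeffectivity of $N=-(K_Y+D)$ you must produce, for a \emph{fixed} $b$, nonzero sections of $\O_Y((p^e-1)N+bH)$ for arbitrarily large $e$; from the generically surjective map ${F_{X/Y}^{(e)}}_*\O_X((p^e-1)M+\cdots)\to f_{Y^e}^*\O_{Y^e}((p^e-1)N)$ this requires the source, pushed to $Y$ and twisted only by the fixed $\O_Y(bH)$, to admit a global section with generically nonzero image. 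Your twist by $\O_X(p^eC)\cong {F_{X/Y}^{(e)}}^*\bigl(((F_Y^e)_X)^*\O_X(C)\bigr)$ cannot supply this: by the projection formula any twist of the source by ${F_{X/Y}^{(e)}}^*\M$ twists the \emph{target} by $\M$ as well, and after pushing forward (flat base change) the target becomes $\O_{Y^e}((p^e-1)N)\otimes {F_Y^e}^*f_*\O_X(C)$, so the effective classes you produce are $(p^e-1)N$ plus an error that scales like $p^e$; dividing by $p^e-1$ and letting $e\to\infty$ then only yields pseudoeffectivity of $N$ plus a nonzero class, not of $N$. Conversely, Fujita vanishing with a bounded ample twist on $X$ gives global generation on $X$, not generic global generation of $f^{(e)}_*\O_X((p^e-1)M+\cdots)$ after a bounded twist pulled back from $Y$; that latter uniform statement is essentially equivalent to the weak positivity one is trying to prove, and it is exactly where \cite{Eji15,Eji16} do genuine work, via an iteration of the trace maps $\phi^{(e)}$ that lets the required ample twist be divided by $p$ at each step. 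As written, your argument is therefore circular (or at least incomplete) at its core.

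Secondly, several of your ``standard reductions'' are not available in this setting. Weak positivity cannot be tested after shrinking $Y$ to a dense open subset: the sections in Definition \ref{defn:wp} are global sections on the projective $Y$ (e.g.\ $\O_{\mathbb P^1}(-1)$ is trivial on an affine chart but not weakly positive). Replacing $Y$ by a resolution changes $K_Y$, and the corresponding modification of $X$ destroys both the effectivity of the boundary and the nefness of $-(K_X+\Delta+f^*D)$, since the discrepancy divisor is exceptional but in general not effective; moreover, when $K_Y+D$ is not $\Q$-Cartier the identification of weak positivity of the reflexive sheaf $\O_Y(-n(K_Y+D))$ with pseudoeffectivity on a smooth model needs an actual argument, not a reference to Grauert--Riemenschneider. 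The Stein factorization step changes $K_Y$ by a ramification divisor and needs a norm- or trace-type descent of sections in characteristic $p$ (here the inequality between canonical divisors at least points in the helpful direction, so this one is repairable). Finally, note that $F$-purity of $(X_{\ol\eta},\Delta|_{X_{\ol\eta}})$ gives surjectivity of the trace as a map of sheaves on the fibre, not on its global sections; you use only the sheaf-level statement, which is correct, but it is precisely why the bounded-twist generation of the pushforward cannot be bypassed and why the gap in the previous paragraph is essential.
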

\begin{thm}[\textup{\cite{HP16b}}]\label{thm:HP}
Let $X$ be a normal projective variety with $\kappa(X,K_X)=0$. 
Let $a:X\to A$ be the Albanese morphism of $X$. 
If $a:X\to \mathrm{Im}(a)$ is generically finite and separable, then $a$ is surjective.
\end{thm}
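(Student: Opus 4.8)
The plan is to prove surjectivity by contradiction, reducing the statement to the structure theory of subvarieties of abelian varieties: I will show that if the image of $a$ were a proper subvariety of $A$, then $X$ would inherit a positive-dimensional general-type quotient, contradicting $\kappa(X,K_X)=0$.

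First I would set $W:=\mathrm{Im}(a)\subseteq A$ and let $\nu\colon W^\nu\to W$ be its normalization. Since $a\colon X\to W$ is generically finite and separable and $X$ is normal, it factors through a generically finite, separable, dominant morphism $a'\colon X\to W^\nu$ with $\dim W^\nu=\dim X$. By the universal property of the Albanese morphism, $W$ generates $A$ as an algebraic group, so surjectivity of $a$ is equivalent to $\dim W=\dim A$; I therefore assume $W\subsetneq A$ and seek a contradiction. The first half of the argument is that Kodaira dimension cannot drop under $a'$: because $a'$ is separable and generically finite, the ramification formula gives $K_X\sim {a'}^{*}K_{W^\nu}+R$ with $R\ge 0$ effective, whence pulling back and multiplying by a section of $\O_X(mR)$ yields injections $H^0(W^\nu,mK_{W^\nu})\hookrightarrow H^0(X,mK_X)$ for all sufficiently divisible $m>0$. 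Consequently $\kappa(W^\nu)\le\kappa(X)=0$, and it suffices to exhibit a strict inequality $\kappa(W^\nu)>0$ under the assumption $W\subsetneq A$.

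Next I would perform the Ueno reduction on $W$. Let $K^0$ be the identity component of the stabilizer $\mathrm{Stab}_A(W)=\{b\in A\mid b+W=W\}$, an abelian subvariety of $A$, put $B:=A/K^0$ with quotient $q\colon A\to B$, and set $\overline W:=q(W)$. As $K^0\subseteq\mathrm{Stab}_A(W)$, the subvariety $W$ is $K^0$-invariant, so $W=q^{-1}(\overline W)$ is the restriction to $\overline W$ of the $K^0$-bundle $q$; in particular $\dim W=\dim\overline W+\dim K^0$ and, the fibers being abelian varieties, $\kappa(W^\nu)=\kappa(\overline W^\nu)$. By construction $\overline W$ generates $B$ and has finite stabilizer in $B$. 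If $\dim\overline W=0$, then $B$ is a point, so $K^0=A$ and $W$ is invariant under all translations of $A$, forcing $W=A$ and contradicting our assumption; hence $\dim\overline W\ge 1$.

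The hard part will be the remaining input, which is genuinely a positive-characteristic statement: a subvariety $\overline W$ of an abelian variety $B$ with finite stabilizer and $\dim\overline W\ge 1$ is of general type, that is, $\kappa(\overline W^\nu)=\dim\overline W>0$. In characteristic zero this is the Ueno--Kawamata theorem, proved by analytic and minimal-model methods unavailable here; in characteristic $p$ one must instead control the positivity of the (Frobenius-stabilized) pushforward of $\omega_{\overline W}$ along the multiplication and Gauss maps of $B$, which is precisely the content of the generic-vanishing package built from the trace maps of relative Frobenius morphisms of \S\ref{subsection:trace}. Granting this, I conclude $\kappa(W^\nu)=\kappa(\overline W^\nu)=\dim\overline W>0$, which contradicts $\kappa(W^\nu)\le 0$ from the second paragraph. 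Therefore $W=A$, and $a$ is surjective.
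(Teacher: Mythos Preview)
The paper does not contain a proof of Theorem~\ref{thm:HP}: it is quoted verbatim from Hacon--Patakfalvi \cite[Theorem~0.2]{HP16b} and used as a black box in the proof of Theorem~\ref{thm:nef_intro}. There is therefore no ``paper's own proof'' to compare your proposal against.

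Your outline is the classical Ueno-reduction strategy and is, in broad shape, how Hacon--Patakfalvi proceed: the ramification inequality $\kappa(W^\nu)\le\kappa(X)=0$, passage to $B=A/K^0$, and the dichotomy on $\dim\overline W$ are all correct. The problem is that what you call ``the hard part'' is not a step in the proof but the entire content of the theorem in positive characteristic, and you have not proved it; you have only named it. Your appeal to ``the generic-vanishing package built from the trace maps of relative Frobenius morphisms of \S\ref{subsection:trace}'' is misplaced: \S\ref{subsection:trace} of this paper merely \emph{defines} the trace maps $\phi^{(e)}_{X/Z}$ and contains no vanishing or positivity statements whatsoever. The assertion that a positive-dimensional subvariety of an abelian variety with finite stabilizer is of general type is exactly what \cite{HP16a,HP16b} establish, using Fourier--Mukai transforms on the dual abelian variety and Frobenius-stable cohomological support loci---machinery entirely absent from the present paper. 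As written, your proposal reduces the theorem to an input of equivalent depth that you neither prove nor correctly source.
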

The next lemma is also used in the proof of Theorem~\ref{thm:nef_intro}.
\begin{lem}\label{lem:wp}
Let $D$ be an effective Weil divisor on a normal projective variety $Y$.
If $\O_Y(-D)$ is weakly positive, then $D=0$.
\end{lem}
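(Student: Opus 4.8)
The plan is to exploit weak positivity together with a numerical/degree argument against effectivity. Suppose for contradiction that $D>0$. The key point is that weak positivity of $\O_Y(-D)$ should force $\O_Y(-D)$ to be, in a suitable numerical sense, a ``pseudo-effective'' or at least ``non-negative'' sheaf, while the fact that $D$ is a nonzero effective divisor means $\O_Y(-D)$ has a section vanishing on $D$ — the two together will be incompatible once we test against a very ample divisor and a general complete-intersection curve.

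First I would reduce to the case where $Y$ is smooth, or at least pass to the smooth locus and observe that removing a codimension-$\geq 2$ set changes neither weak positivity (which is defined via double duals and reflexive operations) nor the nonvanishing of $D$; since $D$ is a Weil divisor, its restriction to the smooth locus is still a nonzero effective Cartier divisor (replacing $D$ by $D$ restricted there and noting $\codim\geq 2$ loss is harmless). Then, choosing a very ample Cartier divisor $H$ and a general curve $C$ cut out by members of $|H|$ (so that $C$ avoids the deleted locus and meets $\mathrm{Supp}(D)$), restrict everything to $C$. Weak positivity is preserved under restriction to a general such curve (this is standard: restriction of a weakly positive sheaf to a general member of a free linear system is again weakly positive), so $\O_Y(-D)|_C = \O_C(-D|_C)$ is a weakly positive sheaf on the smooth curve $C$. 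But a weakly positive line bundle on a smooth projective curve has non-negative degree, whereas $\deg_C(\O_C(-D|_C)) = -(D\cdot C) < 0$ since $D$ is effective, nonzero, and $C$ is a general complete-intersection curve of sufficiently positive degree meeting $D$. This contradiction gives $D=0$.

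The main obstacle I expect is making precise the two ``preservation'' claims — that restriction to a general hyperplane-section curve preserves weak positivity, and that a weakly positive line bundle on a curve has non-negative degree. The second is in fact immediate from the definition given in Definition \ref{defn:wp}: if $\O_C(\L)$ with $\deg\L<0$ were weakly positive, then for $H$ ample on $C$ and $a$ chosen with $a\deg\L + \deg H < 0$ one could never have $S^{ab}\O_C(\L)\otimes\O_C(bH) = \O_C(ab\L + bH)$ generically globally generated, since a line bundle of negative degree on a curve has no nonzero sections; this contradicts weak positivity. The first claim is where a little care is needed: one uses that $H^0(Y,\cdot)$ surjecting generically onto a sheaf, after twisting and symmetrizing, restricts compatibly to a general member $Y'\in|H|$ via the sequence $0\to \G(-H)\to\G\to\G|_{Y'}\to 0$ and Serre vanishing (after further twisting by powers of $H$), so that generic global generation on $Y$ descends to generic global generation on $Y'$; iterating down to a curve $C$ finishes it. Alternatively, and perhaps more cleanly, one can avoid curves entirely: argue directly on $Y$ that weak positivity of $\O_Y(-D)$ forces $H^0(Y,\O_Y(S^{ab}(-D))^{**}(bH)) = H^0(Y,\O_Y(-abD+bH))$ to generically generate for suitable $b$, pick $H$ and a resolution so that one can compare with the fact that $-abD+bH$ restricted to a curve in a base-point-free family has negative degree for $a\gg 0$, again contradicting the existence of the required global sections. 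Either route, the numerics are routine once the reduction to a curve (or to a single cohomological nonvanishing statement) is in place, so I would present the curve-restriction version as the cleanest.
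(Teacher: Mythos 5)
Your argument is essentially correct, but it takes a different route from the paper. The paper's proof is a one-step reduction to the Cartier case: it blows up the ideal of $D$, so that $\pi^*\O_Y(-D)$ surjects onto the line bundle $\O_{Y'}(-D')$ with $D'$ the (effective) exceptional Cartier divisor; weak positivity is preserved under pullback and under quotients, and for a line bundle weak positivity is equivalent to pseudo-effectivity, so $-D'$ is pseudo-effective and hence $D'=0$, giving $D=0$. You instead run a degree computation: from generic global generation of $\bigl(S^{ab}\O_Y(-D)\bigr)^{**}(bH)\cong\O_Y(-abD+bH)$ you extract a nonzero section, and the inequality $a\deg_H D\le H^{\dim Y}$ for every $a$ forces $\deg_H D\le 0$, hence $D=0$ since $D$ is effective. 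Your second, ``curve-free'' variant is in fact the cleanest and is fully rigorous as stated; the paper's blow-up argument buys the ability to quote the standard functoriality of weak positivity and the equivalence with pseudo-effectivity instead of doing any numerics by hand, while yours is more elementary and self-contained.

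One caveat on your preferred curve-restriction version: the statement ``$\O_Y(-D)|_C$ is weakly positive on a general complete-intersection curve $C$'' hides a quantifier problem, since for each $a$ the locus of generic global generation of $\O_Y(-abD+bH)$ is a dense open set depending on $(a,b)$, and a single curve meeting all of these countably many opens is only ``very general'' --- delicate over the countable field $\ol{\mathbb F}_p$, which is the relevant setting here. This is easily repaired: fix $a$ first and choose the curve afterwards, noting that the resulting inequality $-ab\,(D\cdot C)+b\,(H\cdot C)\ge 0$ only involves the intersection numbers $D\cdot H^{\dim Y-1}$ and $H^{\dim Y}$, which do not depend on which general curve you took; or simply use your direct argument on $Y$, which avoids curves altogether. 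With that adjustment the proof is complete.
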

\begin{proof}
Thanks to \cite[4.1]{deJ96}, we can find a regular alteration $\pi:Y' \to Y$ of $Y$, 
that is, a generically finite morphism $\pi$ from a smooth projective variety $Y'$ to $Y$. 
Let $V$ denote the regular locus of $Y$ and set $V':=\pi^{-1}(V)$. 
Let $D'\ge0$ be a divisor on $Y'$ such that $D'|_{V'}=(\pi|_{V'})^*(D|_V)$. 
Take an ample Cartier divisor $H$ on $Y$ and $\alpha \in \mathbb Z_{>0}$. 
By the weak positivity of $\O_Y(-D)$, we see that
$h^0(Y, \O_Y(-\alpha \beta D +\beta H)) \ne 0$ for some $\beta \in \mathbb Z_{>0}$, 
so there is a rational function $\varphi \in K(Y)$ 
with $\beta( -\alpha D +H ) +(\varphi) \ge 0$. 
We then have a divisor $E \ge0$ on $Y'$ 
such that $\mathrm{Supp}\,E \subseteq Y' \setminus V'$
and $\beta( -\alpha D' +\pi^*H) +(\pi^*\varphi) +E \ge0$. 
Since $\dim \left( \pi(\mathrm{Supp}\,E) \right) \le d-2$, 
where $d:=\dim Y$, 
we get $E \cdot (\pi^* H)^{d-1}=0$, 
and so $(-\alpha D' +\pi^* H)\cdot (\pi^*H)^{d-1} \ge 0.$ 
This means $(-D') \cdot (\pi^*H)^{d-1} \ge 0,$ 
which can occur only when $D'=0$, 
and hence $D=0$. 
\end{proof}
\begin{proof}[Proof of Theorem~\ref{thm:nef_intro}]
Let $Z$ be the normalization of $\mathrm{Im}(a)$ and $f:X\to Z$ be the induced morphism. 
We now have the natural morphism $\Omega_A^1|_Z\to \Omega_Z^1$ that is generically surjective, 
so $H^0(Z,\o_Z)\ne0$. 
We see from Theorem~\ref{thm:wp} that $\o_Z^{-1}$ is weakly positive, 
so we get $\o_Z\cong\O_Z$ by Lemma~\ref{lem:wp}, 
and hence $a$ is surjective (i.e., $Z=A$) by Theorem~\ref{thm:HP}. 
Let $a:X\xrightarrow{g}Y\xrightarrow{h}A$ be the Stein factorization of $a$. 
Since the geometric generic fiber of $a$ is $F$-pure, it is reduced, so $a$ is separable, 
which implies $h$ is a separable finite morphism. 
Therefore, we get $\o_Y\cong \o_{Y/A}\cong\O_Y(R)$, where $R$ is the ramification divisor of $h$.  
We then find that $R=0$ by the same argument as before, 
and so $h$ is an \'etale morphism by the Zariski--Nagata purity theorem. 
Hence, we see that $Y$ is an abelian variety by~\cite[Section 18, Theorem]{Mum70} and that $h$ is an isomorphism, 
which is our assertion. 
\end{proof}
\section{Frobenius split morphisms}\label{section:relFsp} 
In this section, we introduce and study the notion of an $F$-split morphism.
We fix an algebraically closed field $k$ of characteristic $p>0$. 
\begin{defn}\label{defn:relFsp}
Let $X$ be a normal variety and $\Delta$ an effective $\mathbb{Q}$-Weil divisor on $X$. 
Let $f:X\to Z$ be a projective morphism to a smooth variety $Z$. 
We say that $f$ is \textit{sharply $F$-split} (\textit{$F$-split} for short) 
with respect to $\Delta$ if there exists an $e>0$ such that the composite 
\begin{align}
\O_{X_{Z^e}}\xrightarrow{{F_{X/Z}^{(e)}}^\sharp} {F_{X/Z}^{(e)}}_*\O_{X^e}\hookrightarrow{F_{X/Z}^{(e)}}_*\O_{X^e}(\lceil(p^e-1)\Delta\rceil) \tag*{$(\ref{defn:relFsp}.1)_e$}
\end{align}
of the natural homomorphism ${F_{X/Z}^{(e)}}^\sharp$ and 
the natural inclusion ${F_{X/Z}^{(e)}}_*\O_{X^e}\hookrightarrow{F_{X/Z}^{(e)}}_*\O_{X^e}(\lceil(p^e-1)\Delta\rceil)$ 
is injective and splits as an $\O_{X_{Z^e}}$-module homomorphism. 
We say that $f$ is \textit{locally sharply $F$-split} (\textit{locally $F$-split} for short) 
with respect to $\Delta$ if there exists an open covering $\{V_i\}$ of $Z$ such that 
$f|_{f^{-1}(V_i)}:f^{-1}(V_i)\to V_i$ is $F$-split with respect to $\Delta|_{f^{-1}(V_i)}$ for every $i$.

We often say that the pair $(f,\Delta)$ is $F$-split (resp. locally $F$-split) 
if $f$ is $F$-split (resp. locally $F$-split) with respect to $\Delta$. 
We simply say $f$ is $F$-split (resp. locally $F$-split) if so is $(f,0)$.
\end{defn}
\begin{rem}\label{rem:relFsp} 
\quad\\
(1) If the morphism $(\ref{defn:relFsp}.1)_e$ splits, then $(\ref{defn:relFsp}.1)_{en}$ also splits for every integer $n>0$. \\
(2) When $Z=\mathrm{Spec}\,k$, where we recall that $k$ is assumed to be algebraically closed, 
it is easily seen that $(f,\Delta)$ is $F$-split if and only if $(X,\Delta)$ is $F$-split. \\
(3) Let $\Delta'$ be an effective $\mathbb{Q}$-divisor on $X$ with $\Delta'\le\Delta$. 
If $(f,\Delta)$ is $F$-split (resp. locally $F$-split), then so is $(f,\Delta')$. \\
(4) In~\cite{Has10}, Hashimoto has dealt with morphisms with local splittings of $(\ref{defn:relFsp}.1)_e$ 
\end{rem}
As an example, we consider the case of projective bundles. 
\begin{eg}\label{eg:pn-bdl}
Let $X$ be the projective bundle associated with a vector bundle $\mathcal E$ on a normal projective variety $Z$, 
and let $f:X\to Z$ be the projection. 
Then $f$ is locally $F$-split. 
Furthermore, if $\mathcal E\cong\bigoplus_{i=1}^{n}\mathcal L_i$, where $\mathcal L_1,\ldots,\mathcal L_n$ are line bundles on $Z$, then $f$ is $F$-split.
The first statement follows from the second. 
For every $m\ge0$, there exists the natural injective morphism 
$$\psi_{m}:\bigoplus_{m_1+\cdots+m_n=m}\mathcal L_i^{m_ip}\cong {F_Z}^*S^m\mathcal E\to S^{mp}\mathcal E.$$
Then, the image of $\psi_m$ is obviously 
$\bigoplus_{m_1+\cdots+m_n=m}\mathcal L_i^{m_ip}\subseteq S^{mp}\mathcal E$, and hence $\psi_m$ splits.
The morphism $\O_{X_{Z^1}}\to{F_{X/Z}^{(1)}}_*\O_{X^1}$ corresponds to the morphism
$$\psi:=\bigoplus_{m\ge0}\psi_m :\bigoplus_{m\ge0} S^m{F_Z^1}^*\mathcal E \to \bigoplus_{m\ge0} S^{mp}\mathcal E\subseteq\bigoplus_{m\ge0} S^{m}\mathcal E.$$
Since $\psi_m$ splits for every $m\ge0$, we see that $\psi$ also splits, 
and hence so does $\O_{X_{Z^1}}\to {F_{X/Z}^{(1)}}_*\O_{X^1}$. 
Note that, as we see in Theorem~\ref{thm:surf}, 
there exists a projective bundle over an elliptic curve 
whose projection is not an $F$-split morphism. 
\end{eg}
Next, we prove that if $f:X\to Z$ is $F$-split with respect to $\Delta$, 
then there exists a $\mathbb{Z}_{(p)}$-Weil divisor $\Delta'\ge \Delta$ on $X$ such that $K_{X/Z}+\Delta'\sim_{\mathbb{Z}_{(p)}}0$. 
\begin{obs}\label{obs:CY}
Let $X$, $\Delta$, $Z$ and $f$ be as in Definition~\ref{defn:relFsp}. 
Assume that $(f,\Delta)$ is $F$-split. Then there exists $e \in \mathbb Z_{>0}$ such that 
$\phi^{(e)}_{(X/Z,\Delta)}:{F_{X/Z}^{(e)}}_* \mathcal L^{(e)}_{(X/Z,\Delta)}\to\O_{X_{Z^e}}$
splits as a homomorphism of $\O_{X_{Z^e}}$-module. Here, we recall that 
$$\mathcal L^{(e)}_{(X/Z,\Delta)}:=\O_{X^e}(\lfloor(1-p^e)(K_{X^e/Z^e}+\Delta)\rfloor).$$
Then there exists an element $s\in H^0(X^e,\lfloor(1-p^e)(K_{X^e/Z^e}+\Delta)\rfloor)$ 
such that $\phi^{(e)}_{(X/Z,\Delta)}$ sends $s$ to $1$. 
Let $E$ be an effective Weil divisor on $X^e$ defined by $s$. 
Set $\Delta':=(p^e-1)^{-1}\lceil(p^e-1)\Delta+E\rceil\ge\Delta$. 
Then by the choice of $E$ we have 
\begin{align*}
\mathcal L^{(e)}_{(X/Z,\Delta')}
:=&\O_{X^e}((1-p^e)(K_{X^e/Z^e}+\Delta')) \\
=&\O_{X^e}(\lfloor(1-p^e)(K_{X^e/Z^e}+\Delta)-E\rfloor)\cong\O_{X^e},
\end{align*}
and $\phi^{(e)}_{(X/Z,\Delta)}:{F_{X/Z}^{(e)}}_*\mathcal L^{(e)}_{(X/Z,\Delta')}\to\O_{X_{Z^e}}$ splits.
\end{obs}
The following lemma shows that an $F$-split morphism is surjective:
\begin{lem}\label{lem:surj}
Let $f:X\to Z$ be a projective morphism between normal varieties that is locally $F$-split. 
Then for each $i$, the sheaf $\mathcal G^i:=R^if_*\O_X$ is a vector bundle satisfying ${F_Z^e}^*\mathcal G^i\cong\mathcal G^i$ for some $e>0$. 
In particular, $f$ is surjective.
\end{lem}
\begin{proof} 
Applying the functor $R^i{f_{Z^e}}_*$ to $\O_{X_{Z^e}}\to{F_{X/Z}^{(e)}}_*\O_{X^e}$, 
we obtain the morphism $R^i{f_{Z^e}}_*\O_{X_{Z^e}}\to R^i{f^{(e)}}_*\O_{X^e}=\mathcal G^i$ 
which is injective and splits locally. 
Since $F_Z$ is flat, the sheaf $R^i{f_{Z^e}}_*\O_{X_{Z^e}}$ is isomorphic to ${F_Z^e}^*R^if_*\O_X={F_Z^e}^*\mathcal G^i$, 
so we get the morphism $\Phi^i:{F_Z^e}^*\mathcal G^i\to \mathcal G^i$. 
We can easily check that $\Phi^i$ is an isomorphism, 
and so the lemma below shows $\mathcal G^i$ is locally free. 
\end{proof}
\begin{lem}[\textup{\cite[Lemma 1.4]{MN84}}]\label{lem:vb}
Let $M$ be a finitely generated module over a regular local ring $R$ of positive characteristic. 
If ${F_R^e}^*M\cong M$ for some $e>0$, then $M$ is free.
\end{lem}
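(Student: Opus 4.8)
The plan is to reduce the statement to a short computation with the dimension of fibers of the free modules $F_R^{e*}M$, exploiting that the Frobenius pushforward multiplies ranks in a controlled way while leaving minimal numbers of generators unchanged. First I would set $d=\dim R$, let $q=p^e$, and recall that since $R$ is a regular local ring of characteristic $p$, the Frobenius $F_R^e:R\to R$ is finite and flat (Kunz's theorem), and $F_{R*}^eR$ is a free $R$-module of rank $q^d$. Tensoring an arbitrary finitely generated $R$-module $M$ with a minimal free resolution and then applying the exact, rank-$q^d$ functor $F_R^{e*}(-)$, one sees that the minimal number of generators is unchanged, i.e. $\mu_R(F_R^{e*}M)=\mu_R(M)$, while the length of the torsion is multiplied: more precisely the Hilbert–Samuel type invariant controlling the ``size'' of $M$ scales by $q^d$.

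The key step is then the following numerical dichotomy. Write $n=\mu_R(M)$, so there is a minimal presentation $R^m\xrightarrow{A} R^n\to M\to 0$; $M$ is free precisely when $m=0$, equivalently when the ideal $I_{\mathrm{Fitt}}$ of appropriate minors equals $R$. Suppose for contradiction $M$ is not free. Then I would consider the generic rank $r<n$ of $M$ (the rank at the generic point of $\Spec R$) and the torsion submodule; applying $F_R^{e*}$ repeatedly produces modules $M_e:=F_R^{e*}M$ with $\mu_R(M_e)=n$ for all $e$, the same generic rank $r$, but with torsion part whose colength (or whose support's multiplicity) grows like $q^{e d'}$ for the appropriate $d'\ge 1$. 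An isomorphism $F_R^{e*}M\cong M$ for a single $e$ forces, by iterating, $F_R^{ne*}M\cong M$ for all $n$, hence all these invariants must coincide for arbitrarily large $q^{ne}$. Since a non-free $M$ over a regular local ring has either $r<n$ (so a nonzero free summand of the presentation survives and scales) or nontrivial torsion (whose colength scales by a positive power of $q$), we get a strictly increasing sequence of invariants forced to be constant — a contradiction. Therefore $m=0$ and $M$ is free.

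The main obstacle is making the ``size grows by a factor $q^{d'}$'' step precise and robust enough to cover all non-free $M$ uniformly, torsion and torsion-free alike. The cleanest route is probably to split into two cases: if $M$ has a nonzero torsion submodule $T$, use that $F_R^{e*}$ is exact (flatness again) so $F_R^{e*}T$ is the torsion submodule of $F_R^{e*}M$, and compare lengths after localizing at a minimal prime of $T$, where $\mathrm{length}_{R_\mathfrak{p}}(F_{R_\mathfrak p}^{e*}T_\mathfrak p)=q^{\dim R_\mathfrak p}\cdot\mathrm{length}(T_\mathfrak p)$ by Kunz; if $M$ is torsion-free but not free, one instead compares $\mu_R(M)$ with the (unchanged) generic rank and invokes that over a regular local ring torsion-free of rank equal to $\mu_R$ would already be free, so the gap $\mu_R(M)-\mathrm{rank}(M)\ge 1$ together with the behavior of $\mathrm{Ext}_R^1$ under the flat base change $F_R^e$ yields the contradiction. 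Since this is exactly the content of \cite[Lemma 1.4]{MN84} as cited, in the paper it suffices to invoke that reference; the sketch above is the reconstruction of its proof.
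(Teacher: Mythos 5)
First, note that the paper itself does not prove this lemma at all; it simply quotes it from \cite[Lemma 1.4]{MN84}, so your reconstruction has to stand on its own. The torsion half of your argument is fine: flatness of Frobenius (Kunz) gives $F_R^{e*}T(M)=T(F_R^{e*}M)$ (after the small check that $F_R^{e*}$ of a torsion-free module is torsion-free, via an embedding into a free module), and localizing at a minimal prime $\mathfrak p$ of the support of $T$ gives $\ell(F^{e*}T_{\mathfrak p})=p^{e\,\mathrm{ht}\,\mathfrak p}\,\ell(T_{\mathfrak p})$, a contradiction. The genuine gap is the torsion-free, non-free case, which is the heart of the lemma (think of $M=\mathfrak m$). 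Every invariant you propose to compare there --- $\mu_R(M)$, the generic rank $r$, and the gap $\mu_R(M)-r\ge 1$ --- is \emph{preserved} by $F_R^{e*}$, so no ``strictly increasing sequence of invariants'' ever appears; nothing scales. And the appeal to ``the behavior of $\mathrm{Ext}^1_R$ under flat base change'' is not an argument as stated; taken literally it fails: for $R$ regular local of dimension $\ge 3$ and $M=\mathfrak m$, which is torsion-free and not free, one has $\mathrm{Ext}^1_R(\mathfrak m,R)\cong\mathrm{Ext}^2_R(k,R)=0$, so $\mathrm{Ext}^1(M,R)$ carries no obstruction at all.

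The standard ways to close this gap are cheaper than your dichotomy and handle all cases uniformly. (i) Fitting ideals base-change along any ring map, so $\mathrm{Fitt}_j(F_R^{e*}M)=\mathrm{Fitt}_j(M)^{[p^e]}$; an isomorphism $F_R^{e*}M\cong M$ forces $I=I^{[p^e]}$ for each $I=\mathrm{Fitt}_j(M)$, hence $I\subseteq\bigcap_n\mathfrak m^{p^{en}}=0$ unless $I=R$, and ``$\mathrm{Fitt}_{r-1}(M)=0$, $\mathrm{Fitt}_r(M)=R$'' over a local ring says exactly that $M$ is free of rank $r$. (ii) Equivalently, pull back a minimal free resolution by $F_R^e$: by Kunz it remains a resolution, and it remains minimal because the matrix entries land in $\mathfrak m^{[p^e]}\subseteq\mathfrak m$; iterating the isomorphism puts the entries of the minimal presentation matrix of $M$ into $\bigcap_n\mathfrak m^{[p^{en}]}=0$, so the matrix is zero and $M$ is free. (iii) If you want to keep your reduction-to-torsion strategy, replace $\mathrm{Ext}^1$ by $N:=\mathrm{Ext}^{c}_R(M,R)$ with $c=\mathrm{pd}_R M\ge 1$: this is nonzero, it is torsion (supported on the non-free locus of $M$), and flat base change gives $F_R^{e*}N\cong\mathrm{Ext}^c_R(F_R^{e*}M,R)\cong N$, so your length-scaling argument applies to $N$ and yields the contradiction. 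Finally, a minor point: you invoke $F^e_{R*}R$ being finite free of rank $p^{ed}$, which needs $F$-finiteness; it is harmless in the paper's geometric setting but unnecessary, since only flatness of $F_R^e$ and $\ell(R/\mathfrak m^{[p^e]})=p^{e\dim R}$ are ever used.
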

The next proposition shows that 
the local $F$-splitting of $(f,\Delta)$ requires certain conditions on $\Delta$ and the fibers of $f$.
\begin{prop}\label{prop:fiber}
Let $X$, $\Delta$, $Z$ and $f$ be as in Definition~\ref{defn:relFsp}. 
Assume that $(f,\Delta)$ is locally $F$-split and $\Delta$ is $\mathbb{Z}_{(p)}$-Cartier. 
Then the following hold: 
\begin{itemize}
\item[$(1)$] Let $f:X\xrightarrow{g}Y\xrightarrow{h}Z$ be the Stein factorization. Then $h$ is \'etale. 
\item[$(2)$] The support of $\Delta$ does not contain any irreducible component of any fiber. 
\item[$(3)$] For every scheme-theoretic point $z\in Z$, 
the pair $(X_{\ol z},\Delta_{\ol z})$ is $F$-split, 
where $X_{\ol z}$ is the geometric fiber over $z$. In particular, $X_{\ol z}$ is reduced.
\item[$(4)$] There exists an open subset $U\subseteq X$ such that 
$\mathrm{codim}(X\setminus U)\ge2$ and $f|_U:U\to Z$ is a smooth morphism.
In particular, the general fiber of $f$ is normal.
\end{itemize}
\end{prop}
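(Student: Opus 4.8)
The plan is to work locally on $Z$, so I may assume $(f,\Delta)$ is $F$-split (not merely locally so), i.e.\ there is an $e>0$ and an $\O_{X_{Z^e}}$-linear splitting $s$ of the composite $(\ref{defn:relFsp}.1)_e$. Restricting the splitting to the open set where $X$ is smooth and then pushing forward by $\iota_{Z^e}$, I may also assume $X$ is smooth and $\Delta$ is Cartier (away from codimension $\ge 2$, which suffices for (1) and (3), and for (2) after noting the fibers $X_{\ol z}$ are Cohen--Macaulay, hence $S_2$, so equalities of divisorial sheaves can be checked in codimension one on the fiber). Dualizing $(\ref{defn:relFsp}.1)_e$ via $\ms Hom_{X_{Z^e}}(\underline{\quad},\O_{X_{Z^e}})$ — exactly as in (\ref{subsection:trace}.3) — identifies the splitting with a surjection $\phi^{(e)}_{(X/Z,\Delta)}:{F^{(e)}_{X/Z}}_*\L^{(e)}_{(X/Z,\Delta)}\to\O_{X_{Z^e}}$ that sends $1\mapsto 1$ in the appropriate sense; this "relative $F$-splitting" is the object I will restrict to fibers.

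The key point is base change. For a point $z\in Z$, let $\ol z$ be its algebraic closure; I claim that restricting $\phi^{(e)}_{(X/Z,\Delta)}$ along $\Spec\ol z\to Z^e$ (using that $Z$ is smooth, so $Z^e\to Z$ is flat, and that the relevant sheaves are flat over $Z$ near the generic point of the fiber) yields the morphism $\phi^{(e)}_{(X_{\ol z},\Delta_{\ol z})}$ defining $F$-splitting of $(X_{\ol z},\Delta_{\ol z})$. Granting this, surjectivity of $\phi^{(e)}_{(X/Z,\Delta)}$ is preserved by the right-exactness of $\otimes$, and compatibility with the unit section is preserved, so $(X_{\ol z},\Delta_{\ol z})$ is $F$-split; in particular $X_{\ol z}$ is $F$-pure hence reduced. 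That gives (2). For (1): if $\mathrm{Supp}\,\Delta$ contained an irreducible component $W$ of a fiber $X_z$, then already on that fiber $\Delta_{\ol z}$ would have a component equal to a component of $X_{\ol z}$, so $\lfloor(1-p^e)(K_{X_{\ol z}}+\Delta_{\ol z})\rfloor$ would fail to contain $0$ in its linear system in the direction of $W$ — more precisely $\L^{(e)}_{(X_{\ol z},\Delta_{\ol z})}$ would not admit the unit section needed for the splitting to send $1\mapsto 1$ — contradicting (2). For (3): by Lemma \ref{lem:surj}, $f$ is surjective (and one checks flat in codimension one on $X$ after shrinking, since the $R^if_*\O_X$ are locally free there and the generic fiber is geometrically reduced, indeed geometrically normal by $F$-purity plus the codimension-one smoothness of the generic fiber coming from $\Omega^1$-arguments); combining flatness with geometric reducedness of all fibers (from (2)) and generic smoothness, Zariski-openness of the smooth locus of $f$ over $Z$ and a codimension count give the desired $U$ with $\codim(X\setminus U)\ge2$.

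The main obstacle I expect is the base-change compatibility of $\phi^{(e)}_{(X/Z,\Delta)}$ with restriction to fibers: one must check that forming the trace map of the relative Frobenius $F^{(e)}_{X/Z}$ commutes with the base change $\Spec\ol z\to Z^e$, which requires relative duality in a form that is stable under non-flat base change of $X_{Z^e}\to Z^e$ — here one leans on flatness of $F_Z^e$ (Kunz) to reduce to flat base change for $(F_Z^e)_X$, and on the Cohen--Macaulayness of fibers to control the boundary twist $\L^{(e)}$ and its saturation in codimension one. Handling the rounding $\lfloor(1-p^e)(K_{X/Z}+\Delta)\rfloor$ correctly under restriction — i.e.\ that $\lfloor(1-p^e)(K_{X/Z}+\Delta)\rfloor|_{X_{\ol z}}$ agrees with $\lfloor(1-p^e)(K_{X_{\ol z}}+\Delta_{\ol z})\rfloor$ up to a sheaf isomorphism respecting sections — is the delicate bookkeeping step, and it is precisely where assertion (1) feeds back in (its negation makes $\Delta_{\ol z}$ non-effective-transverse and breaks the identification).
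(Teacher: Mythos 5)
There is a genuine gap, and it sits exactly where you flagged it: the base-change compatibility of the trace map is never established, and your whole argument for (2) (hence also (1)) rests on it. In fact the detour through duality is unnecessary and is what creates the problem. The paper's Definition \ref{defn:relFsp} phrases $F$-splitting of $(f,\Delta)$ as the splitting of the map of structure sheaves $\O_{X_{Z^e}}\to {F_{X/Z}^{(e)}}_*\O_{X^e}(\lceil(p^e-1)\Delta\rceil)$, and the paper's proof simply restricts this split injection to the fiber over $\ol z$: relative Frobenius commutes with arbitrary base change, so the restriction is the corresponding map for $(X_{\ol z},\Delta_{\ol z})$, and an $\O_{X_{Z^e}}$-linear splitting restricts to an $\O_{X_{\ol z^e}}$-linear splitting with no duality, no flatness of $X_{Z^e}\to Z^e$, and no rounding bookkeeping needed; nonvanishing of the composite on each component of the fiber then gives (1) and (2) simultaneously. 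By dualizing first you must prove that forming $\phi^{(e)}_{(X/Z,\Delta)}$ commutes with restriction to fibers and that $\lfloor(1-p^e)(K_{X/Z}+\Delta)\rfloor$ restricts correctly, which you explicitly leave open. A second problem is your opening reduction: discarding a closed subset of $X$ of codimension $\ge 2$ is not harmless for statements about \emph{every} fiber, since when $\dim Z\ge 2$ an entire irreducible component of a fiber can lie inside the removed locus, so (1) and (2) cannot be checked only on the smooth locus of $X$.

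Your sketch of (3) also does not go through as written. Local freeness of the sheaves $R^if_*\O_X$ (Lemma \ref{lem:surj}) does not yield flatness of $f$ in codimension one, and the "geometric normality of the generic fiber by $F$-purity plus $\Omega^1$-arguments" is essentially the conclusion of (3) being assumed. The paper's mechanism is different and is the missing idea: the splitting of $\O_{X_{Z^e}}\to{F_{X/Z}^{(e)}}_*\O_{X^e}$ factors through the normalization $\pi:Y\to X_{Z^e}$, forcing $\pi_*\O_Y/\O_{X_{Z^e}}=0$, i.e.\ $X_{Z^e}$ is normal; then ${F_{X/Z}^{(e)}}_*\O_{X^e}$, being torsion free on a normal variety, is locally free over $U_{Z^e}$ for an open $U\subseteq X$ with $\codim(X\setminus U)\ge2$, and restricting to fibers one concludes regularity of $U_{\ol z}$ by Kunz's theorem, whence smoothness of $f|_U$. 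You would need to supply both that normality statement and the Kunz argument (or an equivalent substitute) to repair part (3).
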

\begin{proof}
We first prove (2) and (3). 
Take a point $z\in Z$. 
Restricting the homomorphism~(\ref{defn:relFsp}.1)$_e$ to $X_{\ol z^e}$, 
we obtain the homomorphism of $\O_{X_{\ol z^e}}$-modules
$$
\O_{X_{\ol z^e}}
\xrightarrow{{F_{X_{\ol z}/\ol z}^{(e)}}^\sharp} 
{F_{X_{\ol z}/\ol z}^{(e)}}_*\O_{(X_{\ol z})^e}
\to
{F_{X_{\ol z}/\ol z}^{(e)}}_* \left( \O_{(X_{\ol z})^e} \left( (p^e-1)\Delta|_{(X_{\ol z})^e} \right) \right)
$$ 
which is injective and splits for some $e>0$. 
This implies that the homomorphism 
$\O_{X_{\ol z}}\to(\O_X(p^e-1)\Delta)|_{X_{\ol z}}$ 
is not zero over each irreducible component. 
Hence, $\mathrm{Supp}\,\Delta$ does not contain any component of $X_{\ol z}$ 
and $(X_{\ol z},\Delta_{\ol z})$ is $F$-split, so (2) and (3) hold.
 
We show (4). Let $\pi:W\to X_{Z^e}$ be the normalization of $X_{Z^e}$. 
Then, $F_{X/Z}^{(e)}:X^e\to X_{Z^e}$ factors through $W$, and we have the morphisms 
$$
\O_{X_{Z^e}}\xrightarrow{\pi^\#}\pi_*\O_W\to{F_{X/Z}^{(e)}}_*\O_{X^e}
$$ 
of $\O_{X_{Z^e}}$-modules, which implies $\pi^\#$ is injective and splits. 
Since $\pi_*\O_W/\O_{X_{Z^e}}$ is a torsion $\O_{X_{Z^e}}$-module and $\pi_*\O_W$ is torsion-free, 
we find that $\pi_*\O_W/\O_{X_{Z^e}}=0$, so $\pi$ is an isomorphism, which means $X_{Z^e}$ is normal. 
Now, ${F_{X/Z}^{(e)}}_*\O_{X^e}$ is a torsion-free sheaf on a normal variety, 
so it is locally free over an open subset $U$ whose complement $X_{Z^e}\setminus U$ has codimension at least two.  
Therefore, for every $z\in Z$ we see that 
${F_{U_{\ol z}/{\ol z}}^{(e)}}_*\O_{(U_{\ol z})^e}\cong \left( {F_{U/Z}^{(e)}}_*\O_{U^e} \right) |_{U_{\ol z^e}}$ 
is locally free, and so $U_{\ol z}$ is regular by Kunz's theorem. 
Hence $f|_U:U\to Z$ is smooth.
This means that the general fiber satisfies $S_2$ and $R_1$, i.e. it is normal. 
 
We prove (1). Proposition~\ref{prop:fgh}~(1) shows that $h$ is $F$-split, 
so we see from Proposition~\ref{prop:fin} that $h$ is \'etale. 
\textbf{Note that (1) is used in Section~\ref{section:albf} but not in this section}, 
so the use of Propositions~\ref{prop:fin} and~\ref{prop:fgh} does not cause a logical problem. 
\end{proof}
On the contrary to Proposition~\ref{prop:fiber}~(3), the morphism $f$ is not necessarily $F$-split 
even if every fiber is $F$-split (see Theorem~\ref{thm:surf} for example). 
However, if $K_X$ is $\mathbb{Z}_{(p)}$-linearly trivial relative to $f$, 
then the converse holds as seen in Theorem~\ref{thm:cbf} below. 
This is used in the proofs of Proposition~\ref{prop:fin_alb} and Theorem~\ref{thm:surf}.
\begin{thm}[\textup{\cite[Theorem 3.17]{Eji17}}]\label{thm:cbf} 
Let $f:X\to Z$ be a surjective projective morphism from a normal variety $X$ 
to a smooth variety $Z$ such that $f_*\O_X\cong \O_Z$. 
Let $\Delta$ be an effective $\mathbb{Z}_{(p)}$-Weil divisor on $X$. 
Suppose that $K_X+\Delta\sim_{\mathbb{Z}_{(p)}} f^*C$ for some Cartier divisor $C$ on $Z$.  
Let $X_{\ol\eta}$ denote the geometric generic fiber of $f$. 
\begin{itemize}
\item[$(\rm i)$] If $(X_{\ol\eta},\Delta_{\ol\eta})$ is not $F$-split, 
then so is $(X_{\ol z},\Delta_{\ol z})$ for general $z\in Z$. 
\item[$(\rm ii)$] If $(X_{\ol\eta},\Delta_{\ol\eta})$ is $F$-split, 
then there exists an effective $\mathbb{Z}_{(p)}$-Weil divisor $\Delta_Z$ on $Z$ such that the following hold:
\begin{itemize}
\item[$(1)$] The divisor $(K_Z+\Delta_Z)$ is ${\mathbb{Z}_{(p)}}$-linearly equivalent to $C$.
\item[$(2)$] The pair $(X,\Delta)$ is $F$-split if and only if so is $(Z,\Delta_Z)$.
\item[$(3)$] The following are equivalent: 
\begin{itemize} 
\item[$(3$-$1)$] $(f,\Delta)$ is $F$-split;
\item[$(3$-$2)$] $(f,\Delta)$ is locally $F$-split;
\item[$(3$-$3)$] $(X_{\ol z},\Delta|_{X_{\ol z}})$ is $F$-split for every codimension one point $z\in Z$; 
\item[$(3$-$4)$] $\Delta_Z=0$.
\end{itemize}
Here, $X_{\ol z}$ is the geometric fiber over $z$.
\end{itemize} 
\end{itemize}
\end{thm}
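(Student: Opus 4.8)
\emph{Proof strategy.}
The engine is the trace‑map machinery of Section~\ref{section:prelim}, used both absolutely and relatively. First I would normalize the Frobenius power: choose $e_0>0$ so that $p^{e_0}-1$ clears the denominators of the coefficients of $\Delta$ and is divisible by the index of the $\Z_{(p)}$‑linear equivalence $K_X+\Delta\sim_{\Z_{(p)}}f^*C$. Then for every $e\in e_0\Z_{>0}$ the Weil divisor $(1-p^e)(K_X+\Delta)$ is integral and linearly equivalent to $(1-p^e)f^*C$, so $\L^{(e)}_{(X,\Delta)}\cong f^*\O_Z((1-p^e)C)$, and likewise $\L^{(e)}_{(X/Z,\Delta)}\cong f^*\O_Z((1-p^e)(C-K_Z))$. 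This is the one point at which the hypothesis that $K_X+\Delta$ is $\Z_{(p)}$‑trivial over $f$ is essential: it turns the (relative) dualizing data of the Frobenius into a line bundle pulled back from $Z$. By Remark~\ref{rem:relFsp}(1) and its well‑known absolute counterpart it suffices to work with $e$ ranging over the multiples of $e_0$.

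\emph{Construction of $\Delta_Z$; proof of (i) and (1).}
Push the absolute trace $\phi^{(e)}_{(X,\Delta)}\colon {F_X^e}_*\L^{(e)}_{(X,\Delta)}\to\O_X$ forward by $f$. Using the commutation $f\circ F_X^e=F_Z^e\circ f^{(e)}$ of $f$ with Frobenius, the projection formula, and $f_*\O_X\cong\O_Z$, one obtains an $\O_Z$‑linear map ${F_Z^e}_*\O_Z((1-p^e)C)\to\O_Z$; by Grothendieck duality for the finite flat morphism $F_Z^e$ and the standard dictionary on the smooth variety $Z$ between such $p^{-e}$‑linear maps and effective $\Z_{(p)}$‑Weil divisors, this — \emph{provided it is nonzero} — produces a unique effective $\Z_{(p)}$‑Weil divisor $\Delta_Z$ on $Z$ with $K_Z+\Delta_Z\sim_{\Z_{(p)}}C$, which is (1). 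The required nonvanishing is exactly the hypothesis in case (ii) that $(X_{\ol\eta},\Delta_{\ol\eta})$ is $F$‑split: this makes $\phi^{(e)}_{(X,\Delta)}$ surjective along the generic fibre for some admissible $e$, hence its pushforward nonzero, and the resulting $\Delta_Z$ is independent of $e$ by the compatibility of traces under iteration. Part (i) is the contrapositive of the standard spreading‑out of $F$‑splitting of fibres from a general point to the generic point (cf.\ \cite[Corollary~3.31]{PSZ13}) and is logically independent of the construction above.

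\emph{Proof of (2) and (3).}
For (2): $(X,\Delta)$ is $F$‑split iff for some $e$ the natural map $\O_X\to{F_X^e}_*\O_X(\lceil(p^e-1)\Delta\rceil)$ admits a retraction, iff the canonical linear functional "evaluate the map at $1$'' on $\Hom({F_X^e}_*\O_X(\lceil(p^e-1)\Delta\rceil),\O_X)\cong H^0(X,\L^{(e)}_{(X,\Delta)})$ is nonzero; by the projection formula $H^0(X,\L^{(e)}_{(X,\Delta)})\cong H^0(Z,\O_Z((1-p^e)C))\cong H^0(Z,\L^{(e)}_{(Z,\Delta_Z)})$, and under this identification the functional for $X$ is carried to the functional for $(Z,\Delta_Z)$ — equivalently $f_*\phi^{(e)}_{(X,\Delta)}=\phi^{(e)}_{(Z,\Delta_Z)}$, which is how $\Delta_Z$ was constructed — so $(X,\Delta)$ is $F$‑split iff $(Z,\Delta_Z)$ is. For (3): $(3\text{-}1)\Rightarrow(3\text{-}2)$ is immediate from Definition~\ref{defn:relFsp}; $(3\text{-}2)\Rightarrow(3\text{-}3)$ is Proposition~\ref{prop:fiber}(2); $(3\text{-}3)\Leftrightarrow(3\text{-}4)$ follows from a local computation at each codimension‑one point $z\in Z$ (the codimension‑one, i.e.\ DVR, case of the present statement), which identifies the coefficient of $\Delta_Z$ along $\overline{\{z\}}$ with $0$ exactly when $(X_{\ol z},\Delta_{\ol z})$ is $F$‑split, together with the fact that $\Delta_Z$ is determined by its codimension‑one coefficients since $Z$ is normal; finally $(3\text{-}3)\Rightarrow(3\text{-}1)$: by $(3\text{-}3)$, and after enlarging $e$ so that the generic fibre's trace is also surjective at all its codimension‑one points, the relative trace $\phi^{(e)}_{(X/Z,\Delta)}$ is surjective at every codimension‑one point of $X_{Z^e}$ (which is normal, as shown in the proof of Proposition~\ref{prop:fiber}), and since $\mathcal Hom_{\O_{X_{Z^e}}}({F_{X/Z}^{(e)}}_*\O_{X^e}(\lceil(p^e-1)\Delta\rceil),\O_{X_{Z^e}})\cong{F_{X/Z}^{(e)}}_*\L^{(e)}_{(X/Z,\Delta)}$ is reflexive, a splitting of $(\ref{defn:relFsp}.1)_e$ defined away from a codimension‑$\ge2$ closed set extends to a global one, giving $(3\text{-}1)$.

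\emph{Main obstacle.}
The technical heart is making precise the claim that the pushforward by $f$ of the (absolute, resp.\ relative) trace of $X$ equals the corresponding trace of $(Z,\Delta_Z)$, including the matching of the "evaluate at $1$'' functionals: this rests on the functoriality of Grothendieck duality for the tower ${F_{X/Z}^{(e)}}$, $(F_Z^e)_X$, $F_Z^e$ and a careful accounting of the $\o_Z^{1-p^e}$‑twists and the floor/ceiling operations, so as to guarantee in particular that the divisor $\Delta_Z$ thus produced is effective and satisfies (1). A secondary difficulty is the passage, in $(3\text{-}3)\Rightarrow(3\text{-}1)$, from "surjective in codimension one'' to "globally split,'' which uses the normality of $X_{Z^e}$ and the reflexivity of the $\mathcal Hom$‑sheaf established in Section~\ref{section:relFsp}.
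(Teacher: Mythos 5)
Your overall skeleton coincides with the paper's: the divisor $\Delta_Z$ is produced exactly as you propose, by pushing the trace forward along $f$ (the paper uses the relative trace, setting $\theta^{(e)}:={f_{Z^e}}_*\phi^{(e)}_{(X/Z,\Delta)}:\O_{Z^e}((1-p^e)(C-K_{Z^e}))\to\O_{Z^e}$, taking $\O_Z(-E)$ to be its image and $\Delta_Z=(p^e-1)^{-1}E$), and your identification of (3-3) with (3-4) by a codimension-one computation is the paper's argument verbatim. Deferring (i), (ii)(1), (ii)(2) is also consonant with the paper, which simply cites \cite{Eji15} for these (for (2) via $S^0(X,\Delta,\O_X)\cong S^0(Z,\Delta_Z,\O_Z)$ rather than your direct matching of ``evaluate at $1$'' functionals); note however that \cite[Corollary 3.31]{PSZ13} is about $F$-purity, a fiberwise-local condition, and is not the right tool for (i), which is about global $F$-splitting of fibres --- the paper quotes \cite[Observation 3.19]{Eji15}, and under the relative triviality hypothesis it is proved with the same $\theta^{(e)}$ device (reduce to one exponent $e_0$ clearing denominators, then use base change on a single dense open), not by a purity restriction theorem.

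The genuine gap is in your final implication (3-3)$\Rightarrow$(3-1). First, you invoke normality of $X_{Z^e}$ ``as shown in the proof of Proposition \ref{prop:fiber}'', but there normality is deduced \emph{from} the local $F$-splitting of $(f,\Delta)$, which is precisely what you are trying to establish at this point, so the appeal is circular (an $S_2$ statement could be salvaged from flatness of $(F_Z^e)_X$, but you would have to argue it). Second, and more seriously, surjectivity of $\phi^{(e)}_{(X/Z,\Delta)}$ at every codimension-one point of $X_{Z^e}$ does not produce ``a splitting defined away from a codimension $\ge2$ closed set'': it only gives, near each such point, \emph{some} local homomorphism sending $1$ to $1$, and these local choices need not glue to a single section of the higher-rank sheaf $\mathcal Hom({F_{X/Z}^{(e)}}_*\O_{X^e}(\lceil(p^e-1)\Delta\rceil),\O_{X_{Z^e}})$ over a big open set; nor does sheaf-level surjectivity of $\phi^{(e)}$ on such an open set give surjectivity on its global sections. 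The paper avoids this by descending to the base: given (3-4), $\theta^{(e)}$ is a surjective, hence bijective, map of line bundles on $Z^e$, so $H^0(X_{Z^e},\phi^{(e)}_{(X/Z,\Delta)})\cong H^0(Z^e,\theta^{(e)})$ is surjective and one gets a global section hitting $1$, i.e.\ a splitting; it is the rank-one nature of the pushed-forward data on $Z$ (and the injectivity of the nonzero map $\theta^{(e)}$ of line bundles on the integral $Z^e$) that makes the codimension-one local information patch, and there is no analogue of this on $X_{Z^e}$ itself. Since you have already proved (3-3)$\Leftrightarrow$(3-4), replacing your extension argument by this descent to $Z$ closes the gap.
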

\begin{proof}
Obviously,~(i) (resp.~(1)) follows from \cite[Observation 3.19]{Eji17} (resp. \cite[Theorem 3.17 (1)]{Eji17}). 
By \cite[Theorem 3.17 (2)]{Eji17}, we see that $S^0(X,\Delta,\O_X)\cong S^0(Z,\Delta_Z,\O_Z)$ 
(see \cite[\S 3]{Sch14} or \cite[Definition 3.2]{Eji17} for the definition of $S^0$), 
so~(2) is a consequence of the fact that 
$(X,\Delta)$ is $F$-split if and only if $S^0(X,\Delta,\O_X)=H^0(X,\O_X)$. 
To prove~(3), we recall the construction on $\Delta_Z$. 
Replacing $X$ and $Z$ by their smooth loci, we may assume that $X$ and $Z$ are smooth. 
For an $e>0$ with $a|(p^e-1)$, we have 
$${f^{(e)}}_*\mathcal L^{(e)}_{(X/Z,\Delta)}={f^{(e)}}_*\O_{X^e}((1-p^e)(K_{X^e/Z^e}+\Delta))\cong \O_{Z^e}((1-p^e)(C-K_{Z^e}))$$
by the projection formula. We then set 
$$
\theta^{(e)}:\O_{Z^e}((1-p^e)(C-K_{Z^e}))\cong{f^{(e)}}_*\mathcal L^{(e)}_{(X/Z,\Delta)} 
\xrightarrow{{f_{Z^e}}_*\phi^{(e)}_{(X/Z,\Delta)}} {f_{Z^e}}_*\O_{X_{Z^e}} 
\cong \O_{Z^e}.
$$
Since 
$
\left( {f_{Z^e}}_*\phi^{(e)}_{(X/Z,\Delta)} \right) \otimes k \left( \ol\eta^e \right)
\cong H^0 \left( X_{\ol\eta^e},\phi^{(e)}_{(X_{\ol\eta}/\ol\eta,\Delta_{\ol\eta})} \right)
$ 
is surjective because of the assumption, $\theta^{(e)}$ is non-zero. 
Hence there exists an effective divisor $E$ on $Z$ such that $\O_{Z^e}(-E)$ is equal to the image of $\theta^{(e)}$. 
Define $\Delta_Z:=(p^e-1)^{-1}E$. 
By definition, $\Delta_Z=0$ if and only if $\theta^{(e)}$ is surjective.
Furthermore, by the argument similar to the above, we see that for a codimension one point $z\in Z$, 
the pair $(X_{\ol z},\Delta|_{X_{\ol z}})$ is $F$-split 
if and only if $\theta^{(e)}\otimes k(\ol z)$ is non-zero, 
i.e. $\Delta$ is zero around $z$.
We now prove~(3). Obviously, (3-1)$\Rightarrow$(3-2). 
Proposition~\ref{prop:fiber}~(3) shows (3-2)$\Rightarrow$(3-3), 
and (3-3)$\Rightarrow$(3-4) follows from the above argument. 
If $\Delta_Z=0$, i.e., if $\theta^{(e)}$ is an isomorphism, 
then $H^0(X_{Z^e},\phi^{(e)}_{(X/Z,\Delta)})\cong H^0(Z^e,\theta^{(e)})$ is also an isomorphism,
so $\phi^{(e)}_{(X/Z,\Delta)}$ splits. This proves (3-4)$\Rightarrow$(3-1).
\end{proof}
Proposition~\ref{prop:fin} below deals with the case of finite morphisms. 
Note that, in the case when $\Delta=0$, Proposition~\ref{prop:fin} has already been proved in \cite[2.19 Theorem]{Has10}.
\begin{prop}\label{prop:fin}
Let $X$, $\Delta$, $Z$ and $f$ be as in Definition~\ref{defn:relFsp}. 
Assume that $\dim X=\dim Z$. Then the following are equivalent:
\begin{itemize}
\item[$(1)$] $(f,\Delta)$ is $F$-split;
\item[$(2)$] $(f,\Delta)$ is locally $F$-split;
\item[$(3)$] $f$ is \'etale and $\Delta=0$.
\end{itemize}
\end{prop}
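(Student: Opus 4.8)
The plan is to prove the cycle $(1)\Rightarrow(2)\Rightarrow(3)\Rightarrow(1)$. The implication $(1)\Rightarrow(2)$ is immediate: a retraction of the split injection $(\ref{defn:relFsp}.1)_e$ restricts to a retraction over each member of any open cover of $Z$. For $(3)\Rightarrow(1)$ I would invoke the standard fact that when $f$ is étale the relative Frobenius $F_{X/Z}^{(e)}\colon X^e\to X_{Z^e}$ is an isomorphism; taking $\Delta=0$, the map $(\ref{defn:relFsp}.1)_e$ is then itself an isomorphism, in particular a split injection. So the substance of the statement is $(2)\Rightarrow(3)$.

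Assume $(f,\Delta)$ is locally $F$-split. By Lemma \ref{lem:surj}, $f$ is surjective and $\G^0=f_*\O_X$ is a vector bundle; combined with $\dim X=\dim Z$ this forces the generic fiber of $f$ to be the spectrum of a finite field extension of $k(Z)$, so $f$ is generically finite. Since $(f,0)$ is locally $F$-split as well (Remark \ref{rem:relFsp}(3)), Proposition \ref{prop:fiber}(3) gives an open $U\subseteq X$ with $\codim(X\setminus U)\ge 2$ such that $f|_U\colon U\to Z$ is smooth, hence étale since the fibers are zero-dimensional. This already forces $\Delta=0$: restricting $(\ref{defn:relFsp}.1)_e$ to $U_{Z^e}$ and using that $F_{U/Z}^{(e)}$ is an isomorphism yields a split inclusion of $\O_{U_{Z^e}}$ into an $\O_{U_{Z^e}}$-module whose cokernel is torsion, supported over $\mathrm{Supp}\,(\Delta|_U)$; splitness forces this cokernel to vanish, so $\Delta|_U=0$, and since $\mathrm{Supp}\,\Delta$ consists of prime divisors lying in the codimension-$\ge 2$ locus $X\setminus U$, we conclude $\Delta=0$.

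It remains to upgrade ``$f$ étale over $U$'' to ``$f$ étale'', and this is the real work. I would pass to the Stein factorization $X\xrightarrow{\,g\,}W\xrightarrow{\,h\,}Z$, where $W$ is the relative spectrum of $\G^0$ over $Z$. Then $h$ is finite flat (its pushforward $\G^0$ is locally free); $g$ is birational (equal dimensions and $g_*\O_X=\O_W$); and $W$ is normal, because $g_*\O_X=\O_W$ together with the normality of $X$ forces each local ring of $W$ to be integrally closed in $k(W)=k(X)$. Now $g|_U$ is quasi-finite — its fibers sit inside those of $f|_U$ — and birational, so Zariski's Main Theorem shows $g|_U$ is an open immersion onto an open $W^\circ\subseteq W$; using that $g$ has connected fibers one checks $g^{-1}(W^\circ)=U$, and since $g$ is proper with $\dim(X\setminus U)\le\dim X-2$, the complement $W\setminus W^\circ=g(X\setminus U)$ is closed of codimension $\ge 2$. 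Over $W^\circ$ we have $h|_{W^\circ}\cong f|_U$, which is étale, so $h$ is étale in codimension one; applying the Zariski--Nagata purity of the branch locus to the finite dominant morphism $h$ from the normal variety $W$ to the regular variety $Z$ (exactly as in the proof of Theorem \ref{thm:nef_intro}), $h$ is étale, hence $W$ is smooth and in particular $\Q$-factorial. Finally, the exceptional locus of $g$ lies in $X\setminus U$, so $g$ is a small proper birational morphism onto the $\Q$-factorial variety $W$, hence an isomorphism. Therefore $f=h\circ g$ is finite and étale, which gives $(3)$.

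The main obstacle is precisely this last step, promoting étaleness from a big open subset of $X$ to all of $X$. The delicate points are that the Stein base $W$ is normal — so that Zariski's Main Theorem and purity of the branch locus apply — and that $g$ cannot be a nontrivial small contraction, which requires both the smoothness of $W$ extracted from purity and the classical fact that a small proper birational morphism onto a $\Q$-factorial variety is an isomorphism.
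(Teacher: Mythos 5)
Your proof is correct, and for the substantive implication $(2)\Rightarrow(3)$ it takes a genuinely different route from the paper. The paper argues directly with the relative Frobenius: since $\dim X=\dim Z$, the sheaf ${F_{X/Z}^{(e)}}_*\O_{X^e}(\lceil(p^e-1)\Delta\rceil)$ is torsion free of rank one on the \emph{integral} scheme $X_{Z^e}$ (integrality via Lemma \ref{lem:bc}(2), using separability from Proposition \ref{prop:fiber}), so the split injection $(\ref{defn:relFsp}.1)_e$ must be an isomorphism; this gives $\Delta=0$ and that $F_{X/Z}^{(e)}$ itself is an isomorphism, whence for every $z\in Z$ the fiberwise relative Frobenius $F_{X_{\ol z}/\ol z}^{(e)}$ is an isomorphism, forcing each geometric fiber to be a disjoint union of reduced points; thus $f$ is quasi-finite, hence finite, flat because $f_*\O_X$ is locally free (Lemma \ref{lem:surj}), and étale. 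You instead extract étaleness only on an open $U\subseteq X$ with codimension-two complement (Proposition \ref{prop:fiber}(3)), kill $\Delta$ by the same torsion-summand trick restricted to $U$, and then promote étaleness to all of $X$ by Stein factorization, Zariski's Main Theorem, Zariski--Nagata purity for the finite part, and the fact that a small proper birational morphism onto a smooth (hence $\Q$-factorial) variety is an isomorphism. Both arguments are sound and rest on the same preliminary lemmas; the paper's is shorter and stays entirely inside the Frobenius formalism (the degree-one relative Frobenius being forced to split is the whole point), while yours trades that for standard birational-geometry machinery and avoids the fiberwise Frobenius base-change step — at the cost of the extra bookkeeping about $W^\circ$, the codimension of $W\setminus W^\circ$, and the exceptional locus, all of which you do handle correctly.
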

\begin{proof} 
Obviously, (1)$\Rightarrow$(2). 
Let $f$ be \'etale and $\Delta=0$. 
Then $F_{X/Z}^{(e)}:X^e\to X_{Z^e}$ is a finite morphism of degree one between normal varieties, 
so it is an isomorphism, which implies (3)$\Rightarrow$(1). 
We show (2)$\Rightarrow$(3).  
By Lemma~\ref{lem:surj} and Proposition~\ref{prop:fiber}~(4), 
the morphism $f$ is surjective and separable,
so it follows from the assumption that $f$ is generically finite. 
Let $e>0$ be an integer such that the morphism 
$$
\O_{X_{Z^e}}\xrightarrow{\alpha}{F_{X/Z}^{(e)}}_*\O_{X^e}(\lceil(p^e-1)\Delta\rceil)
$$ splits.
Since $F_{X/Z}^{(e)}$ is a finite morphism of degree one, 
${F_{X/Z}^{(e)}}_*\O_{X^e}(\lceil(p^e-1)\Delta\rceil)$ is a torsion-free sheaf of rank one. 
Note that as $f$ is separable, $X_{Z^e}$ is a variety.
Therefore, $\mathrm{Coker}\,{\alpha}=0$, so it is an isomorphism, 
which means that $\Delta=0$ and $F_{X/Z}^{(e)}$ is an isomorphism.
Then, $F_{X_{\ol z}/\ol z}^{(e)}$ is also an isomorphism for every $z\in Z$, 
where $\ol z$ is the algebraic closure of $z$.
Hence, $X_{\ol z}$ is isomorphic to a disjoint union of copies of the spectrum of $k(\ol z)$. 
In particular, $f$ is finite.
The local freeness of $f_*\O_X$, which follows from Lemma~\ref{lem:surj}, implies $f$ is flat, 
and so we conclude that $f$ is \'etale.
\end{proof}
The following lemma is used in the proofs of Proposition~\ref{prop:fin_alb} and Theorem~\ref{thm:surf}.
\begin{lem}\label{lem:kappa}
Let $X$, $\Delta$, $Z$ and $f$ be as in Definition~\ref{defn:relFsp}. 
Assume that $(f,\Delta)$ is locally $F$-split and that $\Delta$ is a $\mathbb{Z}_{(p)}$-Weil divisor. 
Then the Iitaka--Kodaira dimension $\kappa(X,K_{X/Z}+\Delta)$ of $K_{X/Z}+\Delta$ is non-positive. 
Furthermore, if $(f,\Delta)$ is $F$-split, then $\kappa(X,-(K_{X/Z}+\Delta))\ge0$.
\end{lem}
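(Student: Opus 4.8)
The plan is to extract, from the splitting of the relative Frobenius trace, a section that witnesses the non-positivity of $\kappa(X,K_{X/Z}+\Delta)$, and dually to produce an effective divisor in the anti-canonical class when $(f,\Delta)$ is $F$-split. First I would reduce to the case where $(f,\Delta)$ is actually $F$-split rather than merely locally $F$-split: since $\kappa(X,K_{X/Z}+\Delta)$ is a birational-geometric invariant insensitive to restricting over an open cover, and since local $F$-splitting gives, after shrinking $Z$ to a member $V_i$ of the covering, an honest $F$-splitting of $f^{-1}(V_i)\to V_i$, it suffices to bound $\kappa$ over such an open piece; one then argues that $\kappa(X,K_{X/Z}+\Delta)\le 0$ follows from $\kappa(f^{-1}(V_i), (K_{X/Z}+\Delta)|_{f^{-1}(V_i)})\le 0$ because sections of multiples of $K_{X/Z}+\Delta$ over $X$ restrict injectively, and a divisor that moves over the generic fibre direction cannot be killed by such restriction. (Here one also uses that $f$ is surjective with connected generic fibre, from Lemma \ref{lem:surj} and Proposition \ref{prop:fiber}, so that $K_{X/Z}$ makes sense and $h^0$ of its multiples is controlled by the generic fibre.)

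The core of the argument is then Observation \ref{obs:CY}: assuming $(f,\Delta)$ is $F$-split, there exists $e>0$ and a section $s\in H^0\!\left(X^e,\lfloor(1-p^e)(K_{X^e/Z^e}+\Delta)\rfloor\right)$ with $\phi^{(e)}_{(X/Z,\Delta)}(s)=1$. Setting $\Delta':=(p^e-1)^{-1}\lceil(p^e-1)\Delta+E\rceil\ge\Delta$, where $E=\operatorname{div}(s)$, one gets $K_{X^e/Z^e}+\Delta'\sim_{\Z_{(p)}}0$, i.e. $(1-p^e)(K_{X/Z}+\Delta')$ is linearly trivial. This immediately gives both halves of the lemma. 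For the first half: $(p^e-1)(K_{X/Z}+\Delta)=(p^e-1)(K_{X/Z}+\Delta')-((p^e-1)\Delta'-(p^e-1)\Delta)$, and since $(p^e-1)(K_{X/Z}+\Delta')\sim 0$ while $(p^e-1)(\Delta'-\Delta)=\lceil(p^e-1)\Delta+E\rceil-(p^e-1)\Delta$ is an effective (integral) divisor, we see $(p^e-1)(K_{X/Z}+\Delta)\sim -N$ for an effective divisor $N$; hence all multiples $m(p^e-1)(K_{X/Z}+\Delta)\sim -mN$ have at most the one-dimensional space of sections coming from the constants, forcing $\kappa(X,K_{X/Z}+\Delta)\le 0$. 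For the second half: $-(p^e-1)(K_{X/Z}+\Delta)=-(p^e-1)(K_{X/Z}+\Delta')+(p^e-1)(\Delta'-\Delta)\sim (p^e-1)(\Delta'-\Delta)\ge 0$ is linearly equivalent to an effective divisor, so $h^0(X,-(p^e-1)(K_{X/Z}+\Delta))\ge 1$ and therefore $\kappa(X,-(K_{X/Z}+\Delta))\ge 0$.

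The only genuine subtlety — and the step I would treat most carefully — is the first reduction, namely justifying $\kappa(X,K_{X/Z}+\Delta)\le 0$ from the local statement. The clean way is: local $F$-splitting over $\{V_i\}$ gives, shrinking to one $V_i$ containing the generic point of $Z$, that $f_{V_i}$ is $F$-split; Observation \ref{obs:CY} applied there shows $(1-p^e)(K_{X/Z}+\Delta)|_{f^{-1}(V_i)}$ is trivial minus an effective divisor, so its positive multiples have no nonconstant sections \emph{on $f^{-1}(V_i)$}; but the restriction map $H^0(X,\O_X(m(p^e-1)(K_{X/Z}+\Delta)))\to H^0(f^{-1}(V_i),\cdots)$ is injective since $X$ is a variety and $f^{-1}(V_i)$ is a dense open subset (a section of a reflexive sheaf on a normal variety is determined by its restriction to a dense open). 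Hence $h^0(X,\O_X(m(p^e-1)(K_{X/Z}+\Delta)))\le 1$ for all $m>0$, giving $\kappa(X,K_{X/Z}+\Delta)\le 0$. I expect the bookkeeping with $\lfloor\cdot\rfloor$, $\lceil\cdot\rceil$ and the passage between $X^e$ and $X$ (absorbing the Frobenius twist, which is harmless for $\kappa$ since $F_X^e$ is a homeomorphism identifying divisor classes up to the $p^e$-power map) to be the only places demanding care; conceptually the statement is a direct corollary of Observation \ref{obs:CY}.
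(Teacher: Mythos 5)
Your treatment of the second statement is correct and is exactly the paper's argument: Observation \ref{obs:CY} gives $\Delta'\ge\Delta$ with $K_{X/Z}+\Delta'\sim_{\Z_{(p)}}0$, hence $-(K_{X/Z}+\Delta)\sim_{\Z_{(p)}}\Delta'-\Delta\ge0$. The first statement, however, has a genuine gap at the step you yourself flag as the crux. After shrinking to $U:=f^{-1}(V_i)$ and writing $(p^e-1)(K_{X/Z}+\Delta)|_U\sim-E$ with $E\ge0$, you claim that positive multiples of this class have no nonconstant sections on $U$. That is false: $U$ is proper over $V_i$ but not over $k$, so already $H^0(U,\O_U)\cong H^0(V_i,(f_*\O_X)|_{V_i})$ is typically infinite dimensional (take $V_i$ affine), and a fortiori $H^0(U,\O_U(-mE))$ can be huge (any function pulled back from $V_i$ vanishing sufficiently along $f(E)$ compensates $-mE$ when $E$ has vertical parts, and nothing constrains the horizontal direction either). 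The dichotomy ``anti-effective implies $h^0\le1$'' rests on the fact that two effective divisors with principal sum must vanish, which needs $H^0(\O)=k$, i.e.\ properness; so the (correct) injectivity of $H^0(X,\cdot)\hookrightarrow H^0(U,\cdot)$ buys you nothing. For the same reason your opening assertion that $\kappa$ is ``insensitive to restricting over an open cover'' of $Z$ is not true: from $D|_U\sim-E$ you only get $D\sim-E+G$ on $X$ with $G$ an (unknown-sign) divisor supported over $Z\setminus V_i$, which bounds $\kappa(X,D)$ by $\kappa$ of a vertical divisor, i.e.\ by $\dim Z$, not by $0$.

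The paper's proof replaces your non-proper open piece by a proper fibre. Assuming $\kappa(X,K_{X/Z}+\Delta)\ge0$, a section of a multiple restricts nontrivially to the geometric generic fibre $X_{\ol\eta}$, which is projective and $F$-split with respect to $\Delta_{\ol\eta}$ by Proposition \ref{prop:fiber}; the splitting gives a section of $\lfloor(1-p^e)(K_{X_{\ol\eta}}+\Delta_{\ol\eta})\rfloor$, and properness of the fibre then forces $(1-p^e)(K_{X_{\ol\eta}}+\Delta_{\ol\eta})\sim0$. One then pushes the trace map $\phi^{(e)}_{(X/Z,\Delta)}$ forward by $f_{Z^e}$: local $F$-splitting makes ${f_{Z^e}}_*\phi^{(e)}_{(X/Z,\Delta)}$ a surjection of torsion-free sheaves of the same rank, hence an isomorphism, which produces a global section of $(1-p^e)(K_{X/Z}+\Delta)$ on $X$ and so $\kappa(X,K_{X/Z}+\Delta)=0$ in this case. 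Some passage of this kind — to the proper fibre together with a pushforward/trace argument to globalize — is needed; your local-to-global reduction as written cannot be repaired.
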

\begin{proof}
The second statement follows from Observation~\ref{obs:CY}.
By Lemma~\ref{lem:surj}, the morphism $f$ is surjective. 
Assume that $\kappa(X,K_{X/Z}+\Delta)\ge0$. 
Then $\kappa(X_{\ol\eta},K_{X_{\ol\eta}/\ol\eta}+\Delta_{\ol\eta})\ge0$, 
where $\ol\eta$ is the geometric generic point of $Z$. 
Since $(X_{\ol\eta}, \Delta_{\ol \eta})$ is $F$-split, 
we have $H^0(X_{\ol\eta},(1-p^e)(K_{X_{\ol\eta}}+\Delta_{\ol\eta}))\ne0$ for some $e>0$, 
so $(1-p^e)(K_{X_{\ol\eta}}+\Delta_{\ol\eta})\sim 0$.
The morphism 
$$
{f_{Z^e}}_*\phi^{(e)}_{(X/Z,\Delta)}:{f^{(e)}}_*\O_{X^e}((1-p^e)(K_{X^e/Z^e}+\Delta))\to {f_{Z^e}}_*\O_{X_{Z^e}}
$$
is then a surjective morphism between torsion-free coherent sheaves of the same rank, 
and so it is an isomorphism.
Hence, $H^0(X,(1-p^e)(K_{X/Z}+\Delta))\ne0$, which implies $\kappa(X,K_{X/Z})=0$. 
This is our assertion.
\end{proof}
Finally, we consider the composition of (locally) $F$-split morphisms, which is used frequently in Section~\ref{section:albf}.
\begin{prop}\label{prop:fgh} 
Let $X$, $\Delta$, $Z$ and $f$ be as in Definition~\ref{defn:relFsp}, 
and $Y$ be a normal variety. 
Assume that $f:X\to Z$ can be factored into projective morphisms $g:X\to Y$ with $g_*\O_X\cong \O_Y$ and $h:Y\to Z$. 
Suppose that $Z$ is projective. 
\begin{itemize}
\item[$(1)$] If $(f,\Delta)$ is $F$-split, then so is $h$.
\item[$(2)$] Assume that $Y$ is smooth. If $(g,\Delta)$ and $h$ are $F$-split, then so is $(f,\Delta)$. 
\item[$(3)$] The converse of $(2)$ holds if $K_Y\sim_{\mathbb{Z}_{(p)}}h^*K_Z$. 
\end{itemize}
\end{prop}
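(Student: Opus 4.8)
The plan is to systematically unwind the relative Frobenius diagrams for $f$, $g$, and $h$ and track how a splitting of the composite $(\ref{defn:relFsp}.1)_e$ behaves under the factorization $f = h \circ g$. The key structural observation is that for a fixed $e>0$ the relative Frobenius $F^{(e)}_{X/Z}$ factors through $F^{(e)}_{X/Y}$: one has a commutative diagram relating $X^e$, $X_{Y^e}$, $X_{Z^e}$, $Y^e$, and $Y_{Z^e}$, with $X_{Y^e} \to X_{Z^e}$ obtained by base change of $Y^e \to Y_{Z^e}$ along $g_{Z^e}$ (or rather its appropriate version). Concretely, I would write $(\ref{defn:relFsp}.1)_e$ for $f$ as the composite of the pullback along the structure map of the $(\ref{defn:relFsp}.1)_e$ map for $h$ with the $(\ref{defn:relFsp}.1)_e$ map for $g$, using $g_*\O_X \cong \O_Y$ to identify pushforwards, exactly in the spirit of the chain of $\O$-module maps appearing in the proof of Proposition \ref{prop:fiber}. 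The role of the hypothesis $g_*\O_X \cong \O_Y$ is to guarantee that applying $g_{Z^e}$-pushforward to the map for $g$ recovers $\O_{Y_{Z^e}}$ on the source, so the two maps genuinely compose.

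For (1), assume $(\ref{defn:relFsp}.1)_e$ for $(f,\Delta)$ splits, say by $\sigma: {F^{(e)}_{X/Z}}_*\O_{X^e}(\lceil(p^e-1)\Delta\rceil) \to \O_{X_{Z^e}}$. I would push forward by $g_{Z^e}$ and precompose with the natural map ${F^{(e)}_{Y/Z}}_*\O_{Y^e} \to {g_{Z^e}}_*{F^{(e)}_{X/Z}}_*\O_{X^e}(\lceil(p^e-1)\Delta\rceil)$ coming from $g^\sharp$ together with the inclusion of $\Delta$; using $g_*\O_X \cong \O_Y$ (and its Frobenius twist) to identify ${g_{Z^e}}_*\O_{X_{Z^e}} \cong \O_{Y_{Z^e}}$, the resulting composite ${F^{(e)}_{Y/Z}}_*\O_{Y^e} \to \O_{Y_{Z^e}}$ splits $(\ref{defn:relFsp}.1)_e$ for $h$. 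For (2), with $Y$ smooth so that the relative-Frobenius formalism of Section \ref{section:prelim} applies to $h$, I would take $e_1, e_2 > 0$ with splittings for $(g,\Delta)$ and $h$ respectively, pass to a common $e = e_1 e_2$ using Remark \ref{rem:relFsp}(1), and compose the two splitting sections after suitable base change and pushforward: the splitting for $h$, pulled back via the structure morphism to $X$, composed with the splitting for $g$, yields a splitting of $(\ref{defn:relFsp}.1)_e$ for $(f,\Delta)$. Here one must check that the inclusion accounting for $\lceil(p^e-1)\Delta\rceil$ is respected, which follows since $\Delta$ is supported on $X$ and $g$ contributes the only $\Delta$-twist.

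For (3), the extra hypothesis $K_Y \sim_{\Z_{(p)}} h^*K_Z$ makes $K_{Y/Z} \sim_{\Z_{(p)}} 0$, so $h$ falls into the situation of Theorem \ref{thm:cbf} (applied with $X$ there replaced by $Y$, trivial boundary, $C = K_Z$): the $F$-splitting of $h$ is then equivalent to the $F$-splitting of a canonically produced divisor $\Delta_Z$ on $Z$ being zero, and more importantly the dualizing-sheaf identification lets me reverse the construction. Given that $(f,\Delta)$ is $F$-split, part (1) already gives that $h$ is $F$-split; it remains to deduce that $(g,\Delta)$ is $F$-split. I would run the splitting of $(\ref{defn:relFsp}.1)_e$ for $f$ through the factorization and restrict/localize over the generic point of $Z$: over $\Spec k(\bar\eta)$ the morphism $h$ becomes (the spectrum of) a finite separable — in fact, by the triviality of $K_{Y/Z}$ and $h_*\O_Y\cong\O_Z$, an isomorphism after normalization — so the splitting for $f$ descends to a splitting for $g$ fiberwise, and then one spreads out. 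I expect the main obstacle to be (3): one has to show that the splitting of $(\ref{defn:relFsp}.1)_e$ for $f$ does not "leak" entirely into the $h$-factor, i.e. that its restriction to a general fiber of $g$ is still a splitting of $(\ref{defn:relFsp}.1)_e$ for that fiber — and to then globalize this over $Y$. The condition $K_Y \sim_{\Z_{(p)}} h^*K_Z$ is exactly what forces $h$ to be "invisible" at the level of canonical sheaves (via Lemma \ref{lem:bc}(1) giving $\omega_{Y_{Z^e}} \cong \omega_Y$-type identifications), so the trace map $\phi^{(e)}_{X/Z}$ factors as $\phi^{(e)}_{Y/Z} \circ (\text{pushforward of } \phi^{(e)}_{X/Y})$ with $\phi^{(e)}_{Y/Z}$ an isomorphism generically; chasing this factorization and invoking torsion-freeness (as in Propositions \ref{prop:fiber} and \ref{prop:fin}) should close the argument.
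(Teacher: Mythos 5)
Your part (1) is essentially the paper's own argument: flat base change along $(F_Z^e)_Y$ identifies ${g_{Z^e}}_*\O_{X_{Z^e}}$ with $\O_{Y_{Z^e}}$, and pushing the retraction forward by $g_{Z^e}$ splits ${F_{Y/Z}^{(e)}}^\sharp$. Your part (2), composing the two splittings, is also viable and even more direct than the paper's route, provided you make explicit the identification you gloss over: writing $\pi^{(e)}:=(F_{Y/Z}^{(e)})_X$, one has ${\pi^{(e)}}_*\O_{X_{Y^e}}\cong{g_{Z^e}}^*{F_{Y/Z}^{(e)}}_*\O_{Y^e}$ because $F_{Y/Z}^{(e)}$ is affine (no flatness of $g$ is needed for this), so the retraction of ${F_{Y/Z}^{(e)}}^\sharp$ pulls back to a retraction of ${\pi^{(e)}}^\sharp$, which composed with the $\pi^{(e)}$-pushforward of the retraction coming from $(g,\Delta)$ splits $(\ref{defn:relFsp}.1)_e$ for $f$. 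The paper instead proves (2) together with (3) through the trace maps $\phi^{(e)}$, after reducing to an open set where $g$ is flat and $X$, $Y$ are regular.

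The genuine gap is in (3). Your argument hinges on the claim that over the generic point of $Z$ the morphism $h$ is finite and, ``by the triviality of $K_{Y/Z}$ and $h_*\O_Y\cong\O_Z$,'' an isomorphism after normalization. Neither input is available: $h_*\O_Y=f_*\O_X$ is not assumed trivial (and $F$-splitness of $h$ does not give it, e.g.\ finite \'etale covers), and $K_Y\sim_{\Z_{(p)}}h^*K_Z$ does not force $h$ to be generically finite --- in the paper's own applications of (3) (e.g.\ Theorem \ref{thm:nscond}, where $h$ is a surjection of abelian varieties, or the surface cases with $Z=\Spec k$) $h$ has positive-dimensional fibers, so $\phi^{(e)}_{Y/Z}$ is generically a trace map from a sheaf of rank $p^{e(\dim Y-\dim Z)}$, not an isomorphism. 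Your fallback, ``the splitting for $f$ descends to a splitting for $g$ fiberwise, and then one spreads out,'' is precisely the implication that fails in general: $F$-splitness of all fibers does not imply $F$-splitness of the morphism (see the remark before Theorem \ref{thm:cbf} and Theorem \ref{thm:surf}). The mechanism that actually proves (3) is global: after enlarging $\Delta$ as in Observation \ref{obs:CY} and restricting to an open subset over which $g$ is flat and $X$, $Y$ are regular (harmless, since $g_*\O_X\cong\O_Y$ and the relevant $\L^{(e)}$ is the pullback of a line bundle from $Y$, so the $H^0$'s do not change), one obtains the factorization $\phi^{(e)}_{(X/Z,\Delta)}=({g_{Z^e}}^*\phi^{(e)}_{Y/Z})\circ({\pi^{(e)}}_*\phi^{(e)}_{(X/Y,\Delta)}\otimes\o_{\pi^{(e)}})$ with $\o_{\pi^{(e)}}\cong{g_{Z^e}}^*\o_{Y^e/Z^e}^{1-p^e}$ trivial because $(p^e-1)K_{Y/Z}\sim0$; the surjectivity of $H^0(\phi^{(e)}_{(X/Z,\Delta)})$ then forces $H^0(\phi^{(e)}_{(X/Y,\Delta)})\ne0$, and since its target is $H^0(X_{Y^e},\O_{X_{Y^e}})\cong H^0(Y^e,\O_{Y^e})\cong k$ (here $g_*\O_X\cong\O_Y$ is used again), nonzero already means surjective, i.e.\ $\phi^{(e)}_{(X/Y,\Delta)}$ splits. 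Without an argument of this kind your step (3) breaks down whenever $\dim Y>\dim Z$.
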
 
\begin{proof} 
Let $e>0$ be an integer. 
Now we have the following commutative diagram: 
$$
\xymatrix@R=25pt@C=50pt{ 
X^e \ar[r]|{F_{X/Y}^{(e)}} \ar@/^20pt/[rr]^{F_{X/Z}^{(e)}}\ar[dr]_{g^{(e)}} 
& {X_{Y^e}} \ar[d]^{g_{Y^e}} \ar[r]|{\pi^{(e)}} 
& X_{Z^e} \ar[r]^{(F_{Z}^{e})_X} \ar[d]^{g_{Z^e}} & X \ar[d]_{g} \ar@/^30pt/[dd]^{f} \\ 
 & Y^e \ar[r]^{F_{Y/Z}^{(e)}} \ar[dr]_{h^{(e)}} 
 & Y_{Z^e} \ar[d]^{h_{Z^e}} \ar[r]^{(F_{Z}^{e})_Y} & Y \ar[d]_{h} \\ 
 & & Z^e \ar[r]^{F_{Z}^e} & Z. 
}
$$ 
Here, $\pi^{(e)}:=(F_{Y/Z}^{(e)})_X$.  
We first show (1). The above diagram induces the commutative diagram of $\O_{Y_{Z^e}}$-modules 
$$
\xymatrix@R=15pt@C=20pt{ 
\O_{Y_{Z^e}} \ar[r] \ar[d]_{\cong} & {F_{Y/Z}^{(e)}}_*\O_{Y^e} \ar[d]^{\cong} \\ 
{g_{Z^e}}_*\O_{X_{Z^e}} \ar[r] & {g_{Z^e}}_*{F_{X/Z}^{(e)}}_*\O_{X^e}, 
}
$$ 
where the left vertical morphism is an isomorphism because of the flatness of $(F_Z^e)_Y$. 
Since the lower horizontal morphism splits, so does the upper one. 

Next, we show (2) and (3). 
As explained in Observation~\ref{obs:CY}, 
if $(g,\Delta)$ (resp. $(f,\Delta)$) is $F$-split, 
then there exists an effective $\mathbb{Z}_{(p)}$-Weil divisor $\Delta'\ge\Delta$ on $X$ 
such that $K_{X/Y}+\Delta'$ (resp. $K_{X/Z}+\Delta'$) is $\mathbb{Z}_{(p)}$-linearly trivial 
and that $(g,\Delta')$ (resp. $(f,\Delta')$) is also $F$-split.  
Therefore, we may assume that $\Delta$ is a $\mathbb{Z}_{(p)}$-Weil divisor 
and that $(p^e-1)(K_{X/Y}+\Delta)\sim0$ (resp. $(p^e-1)(K_{X/Y}+\Delta)\sim(p^e-1)(f^*K_Z-g^*K_Y)$) for every $e>0$ divisible enough. 
In particular, $\mathcal L^{(e)}_{(X/Y,\Delta)}$ (resp. $\mathcal L^{(e)}_{(X/Z,\Delta)}$) is isomorphic to 
the pullback by $g^{(e)}$ of a line bundle on $Y^{(e)}$. 

Let $V\subseteq Y$ be an open subset such that 
$X_V:=g^{-1}(V)$ is flat over $V$ and $\mathrm{codim}(Y\setminus V)\ge2$.  
Let $u:U\to X_V$ be the open immersion of the regular locus of $X_V$. 
Set $g':=g\circ u:U\to Y$.  
We then have ${g'}_*\O_U\cong g_*\O_X\cong \O_Y$ because of the assumptions.  
In addition, by the flatness of $F_Z^e$, we see that ${{g'}_{Z^e}}_*\O_{U_{Z^e}}\cong \O_{Y_{Z^e}}$.  
Hence, by the projection formula, we see that 
\begin{align*} 
H^0\left(U_{Z^e},({g_{Z^e}}^*\mathcal L)|_{U_{Z^e}}\right)
&\cong H^0\left(Y_{Z^e},{{g'}_{Z^e}}_*({{g'}_{Z^e}}^*\mathcal L)\right) 
\cong H^0\left(Y_{Z^e},\mathcal L\right)
\cong H^0\left(X_{Z^e},{g_{Z^e}}^*\mathcal L\right) 
\end{align*} 
for every line bundle $\mathcal L$ on $Y_{Z^e}$, 
and so we get the following commutative diagram: 
$$
\xymatrix@R=25pt@C=85pt{
H^0\left(U^e,\mathcal L^{(e)}_{(X/Z,\Delta)}|_{U^e}\right) \ar[r]^{H^0\left(U_{Z^e},\phi_{(U/Z,\Delta|_U)}^{(e)}\right)} \ar[d]_{\cong} 
& H^0\left(U_{Z^e},\O_{U_{Z^e}}\right) \ar[d]^{\cong} \\ 
H^0\left(X^e,\mathcal L^{(e)}_{(X/Z,\Delta)}\right) \ar[r]^{H^0\left(X_{Z^e}, \phi_{(X/Z,\Delta)}^{(e)}\right)} 
& H^0\left(X_{Z^e},\O_{X_{Z^e}}\right). 
}
$$ 
In particular, $H^0(U_{Y^e},\O_{U_{Y^e}})\cong H^0(Y^e,\O_{Y^e})\cong k$.  
Since the splitting of $\phi^{(e)}_{(X/Z,\Delta)}$ is clearly equivalent to 
the surjectivity of $H^0\left(X_{Z^e},\phi_{(X/Z,\Delta)}^{(e)}\right)$, 
we see that the $F$-splitting of $(f,\Delta)$ and that of $(f|_U:U\to Z,\Delta|_U)$ are equivalent.  
By an argument similar to the above, 
we find that the $F$-splitting of $(g,\Delta)$ and that of $(g|_U,\Delta|_U)$ are also equivalent. 

Assume that we can choose $V=Y$ and $U=X$, i.e. $X$ and $Y$ are regular and $g$ is flat.  
Let $e>0$ be an integer. 
By the flatness of $g$, we have the following commutative diagram: 
$$
\xymatrix@R=25pt@C=85pt{ 
{{g}_{Z^e}}^*\O_{Y_{Z^e}} \ar[r]^{ {{g}_{Z^e}}^*\left({F_{Y/Z}^{(e)}}^{\sharp}\right)} \ar[d]_{\cong} 
& {{g}_{Z^e}}^*{F_{Y/Z}^{(e)}}_*\O_{Y^e} \ar[d]^{\cong} \\ 
\O_{X_{Z^e}}\ar[r]^{{\pi^{(e)}}^{\sharp}} & {\pi^{(e)}}_*\O_{X_{Y^e}}.
} 
$$ 
This implies that 
$$
\mathcal Hom \left({\pi^{(e)}}^{\sharp},\O_{X_{Z^e}}\right) 
\cong {{g}_{Z^e}}^*\mathcal Hom \left({F_{Y/Z}^{(e)}}^{\sharp},\O_{V_{Z^e}}\right)
={{g}_{Z^e}}^*\phi_{Y/Z}^{(e)}.
$$ 
Applying the functor $\mathcal Hom(\underline{\quad},\O_{X_{Z^e}})$ 
and the Grothendieck duality to the natural morphism 
$$
\O_{X_{Z^e}}
\xrightarrow{{\pi^{(e)}}^{\sharp}} {\pi^{(e)}}_*\O_{X_{Y^e}}
\to {F_{X/Z}^{(e)}}_*\O_{X^e}(\lceil(p^e-1)\Delta\rceil),
$$ 
we obtain the morphism 
$$
\phi^{(e)}_{(X/Z,\Delta)}:{F_{X/Z}^{(e)}}_*\mathcal L^{(e)}_{(X/Z,\Delta)}
\xrightarrow{{\pi^{(e)}}_*\phi^{(e)}_{(X/Y,\Delta)}
\otimes\o_{\pi^{(e)}}} {g_{Z^e}}^*{F_{Y/Z}^{(e)}}_*\mathcal L^{(e)}_{Y/Z}
\xrightarrow{{g_{Z^e}}^*\phi^{(e)}_{Y/Z}} \O_{X_{Z^e}}.
$$ 
Note that 
$$ 
\o_{\pi^{(e)}}
\cong\o_{X_{Y^e}}\otimes{\pi^{(e)}}^*\o_{X_{Z^e}} 
\cong {g_{Z^e}}^*\o_{Y^e/Z^e}^{1-p^e} 
\quad\textup{and}\quad
{g_{Z^e}}^*{F_{Y/Z}^{(e)}}_*\mathcal L^{(e)}_{Y/Z} 
\cong {\pi^{(e)}}_*{g_{Y^e}}^*\mathcal L^{(e)}_{Y/Z}.  
$$ 
We now prove the assertion. 
If $(g,\Delta)$ is $F$-split and $h$ is $F$-split, 
then both of $\phi^{(e)}_{(X/Y,\Delta)}$ and $\phi^{(e)}_{Y/Z}$ split for every $e>0$ divisible enough. 
Therefore, $\phi^{(e)}_{(X/Z,\Delta)}$ also splits, i.e. $(f,\Delta)$ is $F$-split.  
Conversely, suppose that $(f,\Delta)$ is $F$-split and that $(p^e-1)K_{Y/Z}\sim 0$ for an $e>0$.  
Then, $\mathcal L^{(e)}_{Y/Z}\cong\O_{Y_{Z^e}}$ and $\o_{\pi^{(e)}}\cong\O_{X_{Y^e}}$. 
Fix an $e>0$ divisible enough. 
Since $H^0\left(X_{Z^e},\phi_{(X/Z,\Delta)}^{(e)}\right)$ is surjective, 
$H^0\left(X_{Z^e},{\pi^{(e)}}_*\phi_{(X/Y,\Delta)}^{(e)}\right)$ is a non-zero morphism, 
and hence so is $H^0\left(X_{Y^e},\phi^{(e)}_{(X/Y,\Delta)}\right)$. 
This morphism is surjective because of $H^0(X_{Y^e},\O_{X_{Y^e}})\cong H^0(Y^e,\O_{Y^e})\cong k$. 
Thus, $\phi^{(e)}_{(X/Y,\Delta)}$ splits, and so $(g,\Delta)$ is $F$-split.  
Note that the $F$-splitting of $h$ follows directly from (1).  
\end{proof}
\section{Varieties with $F$-split Albanese morphism} \label{section:albf}
In this section, we prove 
Theorems~\ref{thm:ch_Fsp_intro}, \ref{thm:albf_intro}, \ref{thm:ch_ab_intro} and \ref{thm:nscond}.
Throughout this section, we fix an algebraically closed field $k$ of characteristic $p>0$, 
and we denote by $X$ and $\Delta$ respectively 
a normal projective variety over $k$ and an effective $\mathbb{Q}$-Weil divisor on $X$. 
\begin{proof}[Proof of Theorem \ref{thm:albf_intro}]
Suppose that $(a,\Delta)$ is locally $F$-split, 
and let $X\xrightarrow{f}Z\xrightarrow{g}A$ be the Stein factorization of $a$. 
Thanks to Proposition~\ref{prop:fiber}, 
we only need to show that the \'etale morphism $g$ is an isomorphism. 
Applying~\cite[Section 18, Theorem]{Mum70}, we see that $Z$ is an abelian variety, 
so the universal property of $a$ tells us that $g$ is an isomorphism, which completes the proof. 
\end{proof}
The next lemma is used to prove Theorems~\ref{thm:ch_Fsp_intro} and~\ref{thm:ch_ab_intro}.
\begin{lem}\label{lem:inv_lb} 
Let $\mathcal F$ be a coherent sheaf of rank $r$ on a normal variety $Y$. 
Let $\mathcal F'$ be an indecomposable direct summand of $\mathcal F$ whose rank is $r'$. 
Set $I:=\{\mathcal L\in\mathrm{Pic}(Y)|\mathcal F\otimes\mathcal L\cong\mathcal F\}$ and $I':=\{\mathcal L\in I|\mathcal F'\otimes\mathcal L\cong\mathcal F'\}$. 
Then $\bigoplus_{[\mathcal L]\in I/I'}\mathcal F'\otimes\mathcal L$ is a direct summand of $\mathcal F$. 
In particular, $\#(I/I')\le r/r'$.  
\end{lem}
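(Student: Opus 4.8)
The plan is to decompose $\F$ into indecomposable summands and track how tensoring by elements of $I$ permutes them. Fix a Krull--Schmidt decomposition $\F \cong \bigoplus_{j=1}^{s} \F_j$ into indecomposable coherent sheaves, with $\F_1 = \F'$; such a decomposition exists and is unique up to isomorphism and reordering (Atiyah's theorem holds over any complete-local-Noetherian base, and one passes to the generic point or uses that $\mathrm{End}(\F_j)$ is local — the key point being that indecomposable summands of a fixed coherent sheaf on a Noetherian scheme satisfy Krull--Schmidt). For each $\L \in I$, the isomorphism $\F \otimes \L \cong \F$ gives $\bigoplus_j (\F_j \otimes \L) \cong \bigoplus_j \F_j$, and since each $\F_j \otimes \L$ is again indecomposable, uniqueness yields a permutation $\sigma_\L$ of $\{1,\dots,s\}$ with $\F_j \otimes \L \cong \F_{\sigma_\L(j)}$. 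The assignment $\L \mapsto \sigma_\L$ is a group homomorphism from $I$ to the symmetric group (well-defined because the permutation matching indecomposable summands is unique), and $I' = \{\L \in I \mid \sigma_\L(1) = 1\}$ is precisely the stabilizer of the index $1$; hence $I/I'$ is identified with the orbit of $1$ under $I$, which is a subset of $\{1,\dots,s\}$.

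Next I would observe that the summands in the orbit of $1$ are exactly, up to isomorphism, the sheaves $\F' \otimes \L$ for $[\L] \in I/I'$: if $\sigma_\L(1) = \sigma_{\L''}(1)$ then $\F' \otimes \L \cong \F' \otimes \L''$, so this collection is well-defined on $I/I'$ and consists of $\#(I/I')$ pairwise-nonisomorphic — or more precisely, distinct-index — indecomposable summands of $\F$. Collecting these summands gives that $\bigoplus_{[\L] \in I/I'} \F' \otimes \L$ is (isomorphic to) a direct summand of $\F$, which is the first assertion. For the rank bound, each such summand has rank $r'$ (tensoring by a line bundle preserves rank), they contribute disjoint indices in the decomposition of $\F$, so their total rank $\#(I/I') \cdot r'$ is at most $\rank \F = r$, giving $\#(I/I') \le r/r'$.

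The main technical obstacle is justifying the Krull--Schmidt property for coherent sheaves on a general normal variety $Y$, since $Y$ is not local. I expect to handle this by noting that we only need uniqueness of the multiset of isomorphism classes of indecomposable summands, which follows because $\F$ has finite length in the category of coherent sheaves modulo the subtlety that this category is not artinian — so instead one argues via the local ring $\mathrm{End}_{\O_Y}(\F)$: it is a module-finite algebra over $\O_Y(Y)$ when $Y$ is projective, hence semiperfect, and idempotent-lifting gives Krull--Schmidt. Alternatively, and more cheaply for the application, one can work at the level of $I/I'$ directly: define the $I$-action on the finite set of isomorphism classes of indecomposable summands appearing in $\F$ (a finite set since $\rank \F$ is finite and each summand has positive rank), show the action is well-defined using only \emph{existence} of a decomposition plus the observation that tensoring by $\L \in I$ must send the summand multiset to itself (compare at the generic point, where everything becomes a finite-dimensional vector space and Krull--Schmidt is trivial), and then the orbit-stabilizer count goes through verbatim. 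I would present the argument this second way to keep the commutative-algebra input minimal.
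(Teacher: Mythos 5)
Your main argument is essentially the paper's proof: the paper also observes that $\F'\otimes\L$ is again a direct summand of $\F$ for every $\L\in I$, that $\F'\otimes\L\cong\F'\otimes\L''$ exactly when $\L''\otimes\L^{-1}\in I'$, and then invokes the Krull--Schmidt theorem for coherent sheaves (citing Atiyah) to conclude that $\bigoplus_{[\L]\in I/I'}\F'\otimes\L$ splits off $\F$, with the rank count giving $\#(I/I')\le r/r'$; your orbit--stabilizer phrasing is the same argument spelled out. One caution: the ``cheaper'' second route you say you would actually present does not work as sketched. Comparing at the generic point only compares ranks --- every coherent sheaf becomes a free module over the function field there --- so it cannot show that tensoring by $\L\in I$ sends the multiset of isomorphism classes of indecomposable summands to itself; that invariance \emph{is} the uniqueness statement of Krull--Schmidt, which is exactly what you were trying to avoid, so the shortcut is circular. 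Stick with your first route (Krull--Schmidt via Atiyah, or equivalently the fact that $\mathrm{End}(\F)$ is a finite-dimensional $k$-algebra, hence semiperfect), noting that in all of the paper's applications $Y$ is projective, which is what makes this input available.
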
 
\begin{proof} 
For every $\mathcal L\in I$, the sheaf $\mathcal F'\otimes\mathcal L$ is also a direct summand of $\mathcal F$. 
Furthermore, $\mathcal F\otimes\mathcal L\cong\mathcal F\otimes\mathcal L'$ if and only if $\mathcal L'\otimes\mathcal L^{-1}\in I$. 
Hence, the Krull--Schmidt theorem~\cite{Ati56} tells us that 
$\bigoplus_{[\mathcal L]\in I/I'}\mathcal F'\otimes\mathcal L$ is a direct summand of $\mathcal F$, 
and so $r'\#(I/I')\le r$, which is our claim.  
\end{proof}
To prove Theorem \ref{thm:ch_Fsp_intro}, 
we recall a characterization of ordinary abelian varieties which was established by Sannai and Tanaka. 
\begin{thm}[\textup{\cite[Theorem 1.1]{ST16}}]\label{thm:ST}
A smooth projective variety $Y$ is an ordinary abelian variety 
if and only if $K_Y$ is pseudo-effective and ${F_Y^e}_*\O_Y$ is isomorphic to a direct sum of line bundles for infinitely many $e>0$.
\end{thm}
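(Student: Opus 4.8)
\emph{Plan.} The content is in the direction ``$\Rightarrow$''; the converse is a computation with the Frobenius kernel, which I indicate first.

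\emph{Easy direction.} If $Y$ is an ordinary abelian variety, then $\omega_Y\cong\O_Y$ is pseudo-effective, and for every $e>0$ the kernel of the relative Frobenius $F^e_{Y/k}\colon Y\to Y^{(p^e)}$ is a finite group scheme of multiplicative type (this is what ordinariness says) of order $p^{e\dim Y}$; decomposing ${F^e_{Y/k}}_*\O_Y$ into the isotypic components for this action, indexed by the points of the \'etale Cartier dual, exhibits it as a direct sum of $p^{e\dim Y}$ line bundles (the $p^e$-torsion line bundles of $\widehat Y$). Since $k$ is perfect, $Y^{(p^e)}\cong Y$ over $k$ and the absolute Frobenius differs from the relative one by a $k$-isomorphism, so ${F^e_Y}_*\O_Y$ is a direct sum of line bundles for all $e$.

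\emph{Hard direction; reductions.} Assume $K_Y$ is pseudo-effective and ${F^e_Y}_*\O_Y\cong\bigoplus_i\L_i$ is a direct sum of line bundles for infinitely many $e$. Local freeness of this push-forward and Kunz's theorem show $Y$ is smooth over $k$. First, $\O_Y$ is itself one of the summands: since $H^0(Y,{F^e_Y}_*\O_Y)=H^0(Y,\O_Y)=k$, exactly one $\L_{i_0}$ carries a nonzero section and the Frobenius unit $\O_Y\hookrightarrow{F^e_Y}_*\O_Y$ factors through $\L_{i_0}$; the cokernel of this unit is locally free on the smooth $Y$, so the induced monomorphism $\O_Y\to\L_{i_0}$ has a cokernel that is simultaneously torsion and locally free, hence $0$, whence $\L_{i_0}\cong\O_Y$ and $Y$ is globally $F$-split. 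Counting cohomology, $h^i(Y,\O_Y)=\sum_i h^i(Y,\L_i)$ forces $H^i(Y,\L_i)=0$ for $i\ne i_0$, so the Frobenius acts bijectively on every $H^i(Y,\O_Y)$; in particular $\Pic^0_Y$ is reduced and ordinary, hence so is $A:=\Alb(Y)$, and $\dim A=h^1(Y,\O_Y)$. Moreover the splitting is a nonzero element of $\Hom({F^e_Y}_*\O_Y,\O_Y)\cong H^0(Y,\O_Y((1-p^e)K_Y))$, so $(1-p^e)K_Y$ is linearly equivalent to an effective divisor; combined with $K_Y$ pseudo-effective, intersecting with a power of an ample divisor (as in Lemma~\ref{lem:wp}) gives $(p^e-1)K_Y\sim0$, so $K_Y$ is torsion of order prime to $p$.

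\emph{Hard direction; conclusion.} It remains to prove $Y$ is an abelian variety. Since $K_Y$ is torsion, $-mK_Y$ is nef and Cartier for a suitable $m$ with $p\nmid m$, and the general geometric fiber of the Albanese $a\colon Y\to A$ is $F$-split (a global splitting of $Y$ restricts over a general point), hence $F$-pure; so Theorem~\ref{thm:nef_intro} applies and $a$ is a separable surjective morphism with connected fibers, and arguing as in its proof $a_*\O_Y\cong\O_A$. As $K_Y$ is torsion and $A$ is a smooth minimal model, a birational morphism $Y\to A$ would be an isomorphism, so the only remaining point is that $a$ is generically finite, i.e.\ $\dim Y=\dim A$. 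I would get this by showing $\Omega^1_Y$ is a direct sum of line bundles that become trivial on a finite \'etale cover: using the Cartier exact sequences $0\to\O_Y\to{F_Y}_*\O_Y\to B^1_Y\to0$ and $0\to B^1_Y\to Z^1_Y\to\Omega^1_Y\to0$, together with the decomposition of ${F^e_Y}_*\O_Y$ for all $e$ in the set of admissible exponents --- which, being infinite and closed under differences, equals $d\Z_{>0}$ --- one bootstraps a decomposition of $\Omega^1_Y$ into line bundles, necessarily numerically trivial (their classes sum to $K_Y\equiv0$, $Y$ being $F$-split) and Frobenius-periodic, hence $(p-1)$-torsion. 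After a finite \'etale cover $\Omega^1$ becomes trivial, and by the Mehta--Srinivas classification of smooth projective varieties with trivial tangent bundle (whose ordinary case is the ordinary abelian varieties) this cover, hence $Y$, is an abelian variety; by the previous paragraph it is ordinary.

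\emph{Main obstacle.} The crux is this last step. Extracting a decomposition of $\Omega^1_Y$ from one of ${F^e_Y}_*\O_Y$ is genuinely delicate, because the Cartier sequences split only Zariski-locally a priori and one must propagate the global splitting through all the intermediate sheaves $B^i_Y$, $Z^i_Y$, exploiting the hypothesis for infinitely many $e$; and the structural input ``smooth projective, ordinary, trivial tangent bundle $\Rightarrow$ abelian variety'' has no Hodge-theoretic shortcut in characteristic $p$ --- ordinariness is precisely what excludes the characteristic-$p$ pathologies (quasi-elliptic fibrations, non-classical surfaces) that otherwise obstruct the conclusion.
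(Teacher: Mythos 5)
First, note that the paper does not prove this statement: Theorem \ref{thm:ST} is imported from Sannai--Tanaka \cite{ST16}, so there is no internal proof to compare yours with, and your sketch has to stand on its own. Its preliminary reductions do: smoothness of $Y$ via Kunz, the identification of $\O_Y$ as a direct summand (your argument is fine once one notes that the Frobenius unit has zero component in every sectionless summand, so its cokernel is the cokernel of $\O_Y\to\L_{i_0}$ plus the remaining line bundles, and that cokernel is then a torsion direct summand of a locally free sheaf, hence zero), bijectivity of Frobenius on all $H^i(Y,\O_Y)$, ordinarity of the Albanese variety (via \cite{MS87}, exactly as in the paper's proof of Theorem \ref{thm:ch_Fsp_intro}), and $(p^e-1)K_Y\sim 0$ from Grothendieck duality plus pseudo-effectivity.

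The two later steps, however, contain genuine gaps. (i) To invoke Theorem \ref{thm:nef_intro} you need $F$-purity of the geometric generic fiber of $a$, and your justification ``a global splitting of $Y$ restricts over a general point'' is not an argument: an absolute Frobenius splitting does not restrict to fibers unless they are compatibly split, and even smoothness or $F$-purity of the total space does not pass to general fibers (quasi-elliptic fibrations have smooth total space and cuspidal, non-$F$-pure, general fibers). In this paper the implication ``$Y$ $F$-split $\Rightarrow$ the Albanese morphism is $F$-split, hence its fibers are $F$-split'' is Theorem \ref{thm:ch_Fsp_intro} combined with Proposition \ref{prop:fiber}, but Theorem \ref{thm:ch_Fsp_intro} is proved \emph{using} Theorem \ref{thm:ST}, so appealing to it here would be circular; you need an independent route (for instance the relative-Frobenius/trace machinery of Section \ref{section:relFsp}, or surjectivity of $a$ via \cite{HP16a} from $\kappa_S(Y)=0$). (ii) The decisive point, $\dim A=\dim Y$, is only a plan: extracting a global direct-sum decomposition of $\Omega^1_Y$ from the decompositions of ${F_Y^e}_*\O_Y$ through the Cartier sequences is precisely the hard step and is not carried out, and your auxiliary claim that the set of admissible exponents is closed under differences (or sums) is unjustified --- ${F_Y^{e+e'}}_*\O_Y=\bigoplus_i {F_Y^{e'}}_*\L_i$ need not split into line bundles. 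For comparison, the proof in \cite{ST16} does not go through trivializing the tangent bundle: roughly, it analyzes the summands themselves (showing they are torsion and pairwise distinct, so they land in $\Pic^0$ and force $\dim A\ge\dim Y$ by counting, much as in the paper's proofs of Theorem \ref{thm:ch_Fsp_intro} and Claim \ref{cl:sep}) and uses the generic vanishing theory of Hacon--Patakfalvi on the Albanese. As it stands, your proposal is incomplete at its core.
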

\begin{rem}\label{rem:ES}
It was shown in \cite{ES19} that 
we actually need to check the decomposition of ${F_Y^e}_*\O_X$ only for $e=2$ in the above theorem. 
\end{rem}
For convenience, we use the following notation: 
\begin{notation}\label{notation:kernel}
Let $\varphi:S\to T$ be a morphism of schemes. 
We denote by $\mathrm{Pic}(S)[\varphi]$ $($resp. $\mathrm{Pic}^0(S)[\varphi]$$)$ 
the kernel of the induced homomorphism $\varphi^*:\mathrm{Pic}(T)\to\mathrm{Pic}(S)$ $($resp. $\varphi^*:\mathrm{Pic}^0(T)\to\mathrm{Pic}^0(S)$$)$.
Then, $\mathrm{Pic}(X)[{F_X}^e]$ is the set of $p^e$-torsion line bundles for every $e>0$. 
We denote it by $\mathrm{Pic}(X)[p^e]$.
\end{notation}
\begin{proof}[Proof of Theorem \ref{thm:ch_Fsp_intro}]
We first prove that if $(X,\Delta)$ is $F$-split, 
then $(a,\Delta)$ is $F$-split and $A$ is ordinary.
We have the following commutative diagram 
$$ 
\xymatrix@R=20pt @C=20pt{ 
H^1(X,\O_X) \ar[r]^{{F_X}^*} & H^1(X,\O_X) \\ 
H^1(A,\O_A) \ar[u]^{a^*} \ar[r]^{{F_A}^*} & H^1(A,\O_A) \ar[u]_{a^*}. 
}
$$ 
Since $X$ is $F$-split, the upper horizontal arrow is bijective. 
By \cite[Lemma(1.3)]{MS87}, we see that the vertical arrows are injective, 
so the lower horizontal arrow is also injective, which implies $A$ is ordinary. 
(Note that, although $X$ is assumed to be smooth in \cite[Lemma(1.3)]{MS87}, 
the proof does not use smoothness of $X$.) 
 
Let $X\xrightarrow{f}Z\xrightarrow{g}A$ be the Stein factorization of $a$. 
Proposition~\ref{prop:fgh}~(1) then shows that $Z$ is $F$-split, 
so $\O_Z$ is a direct summand of $\mathcal F^{(e)}:={F_Z^{e}}_*\O_{Z^e}$ for each $e>0$. 
Since $a^*:\mathrm{Pic}^0(A)\to\mathrm{Pic}^0(X)$ is bijective, 
$g^*:\mathrm{Pic}^0(A)\to\mathrm{Pic}^0(Z)$ is injective, so 
$$
p^{e\cdot\dim A}=\#\mathrm{Pic}^0(A)[F_A^e]\le\#\mathrm{Pic}^0(Z)[F_Z^e].
$$ 
By the projection formula and Lemma \ref{lem:inv_lb} (set $\mathcal F:=\mathcal F^{(e)}$ and $\mathcal F':=\O_Z$), 
we obtain
$$
p^{e\cdot\dim A}
\le \#\{\mathcal L\in\mathrm{Pic}(Z)|\mathcal F^{(e)}\otimes\mathcal L\cong\mathcal F^{(e)}\}
\le {\rm rank}\;\mathcal F^{(e)}
=p^{e\cdot\dim Z}.
$$
This implies that $\dim Z=\dim A$ and that 
$\bigoplus_{\mathcal L\in\mathrm{Pic}(Z)[p^e]}\mathcal L\subseteq\mathcal F^{(e)}$ is a direct summand of maximum rank. 
This inclusion is an isomorphism because of the torsion-freeness of $\mathcal F^{(e)}$. 
In particular, $F_Z^e$ is flat, i.e. $Z$ is smooth. 

We now only need to prove that $K_Z$ is pseudo-effective. 
If this holds, then Theorem~\ref{thm:ST} shows that $Z$ is an ordinary abelian variety, 
so we see from Proposition~\ref{prop:fgh}~(3) that $(a,\Delta)$ is $F$-split, which is our assertion. 
Fix $e>0$. We now have $(\mathcal F^{(e)})^*\cong\mathcal F^{(e)}$ and ${F_Z^e}^*\mathcal F^{(e)}\cong\bigoplus\O_{Z^e}$. 
By (\ref{subsection:trace}.2) of Subsection \ref{subsection:trace}, we have $$
{F_Z^e}_*\o_{Z^e}^{1-p^e}\cong \mathcal Hom({F_Z^e}_*\O_{Z^e},\O_{Z})=(\mathcal F^{(e)})^*\cong \mathcal F^{(e)}, 
$$
so we have surjective morphisms
$\bigoplus\O_{Z^e}\cong{F_Z^e}^*\mathcal F^{(e)}\cong{F_Z^e}^*{F_Z^e}_*\o_{Z^e}^{1-p^e}\to\o_{Z^e}^{1-p^e}$, 
which implies that $\o_{Z}^{1-p^e}$ is globally generated. 
Since $H^0(Z^e,\o_{Z^e}^{1-p^e})\cong H^0(Z,\mathcal F^{(e)})\cong k$, 
we get $\o_{Z}^{1-p^e}\cong\O_Z$, i.e. $\o_{Z}^{p^e-1}\cong\O_Z$, and so $K_Z$ is pseudo-effective. 

The converse follows directly from Proposition \ref{prop:fgh}.
\end{proof}
\begin{proof}[Proof of Theorem \ref{thm:ch_ab_intro}]
Assume that $(a,\Delta)$ is locally $F$-split. 
Lemma~\ref{lem:surj} shows the first statement. 
We show the second. Suppose $\dim A=\dim X$. 
Then Proposition~\ref{prop:fin} tells us that $a$ is an isomorphism and $\Delta=0$. 
\end{proof}
The remainder of this section is devoted to prove Theorem~\ref{thm:nscond} below. 
We continue to use the same notation as that introduced at the begging of this section. 
\begin{thm}\label{thm:nscond} 
Let $\gamma_A$ denote the $p$-rank of $A$. 
Let $f:X\to B$ be a morphism to an abelian variety $B$ of $p$-rank $\gamma_B$. 
Suppose that $(f,\Delta)$ is $F$-split. 
Then $(a,\Delta)$ is $F$-split and $\gamma_A=\gamma_B+\dim A-\dim B$. 
In particular, if $B$ is ordinary, then $(X,\Delta)$ is $F$-split.  
\end{thm}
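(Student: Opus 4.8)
\textbf{Proof proposal for Theorem \ref{thm:nscond}.}

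The plan is to factor the given morphism $f : X \to B$ through the Albanese morphism and then transport the $F$-splitting along that factorization using Proposition \ref{prop:fgh}, while keeping careful track of $p$-ranks via the induced maps on $\Pic^0$. First I would use the universal property of the Albanese: since $B$ is an abelian variety, $f$ factors as $X \xrightarrow{a} A \xrightarrow{h} B$ for a morphism $h : A \to B$, and after translating we may assume $h$ is a homomorphism of abelian varieties. By Lemma \ref{lem:surj} applied to $(f,\Delta)$ we know $f$ is surjective, hence $h$ is surjective, so $\dim h^{-1}(0_B) = \dim A - \dim B$ and $h$ is a separable surjective morphism onto $B$ (indeed a smooth surjective group homomorphism up to its kernel). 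Let $A \xrightarrow{g} Z \xrightarrow{q} B$ be the Stein factorization of $h$; since $h_*\O_A \cong \O_B$ for a surjective homomorphism of abelian varieties with connected fibers, $g = h$ and no factorization is needed — but I will instead want the Stein factorization of $a$ itself, call it $X \xrightarrow{a'} A' \xrightarrow{\pi} A$, which by the argument in the proof of Theorem \ref{thm:albf_intro} one expects to be trivial once we know $(a,\Delta)$ is $F$-split.

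The core of the argument: apply Proposition \ref{prop:fgh}(1) to the factorization $f = h \circ a$ (with $a$ playing the role of $g$, noting $a_*\O_X \cong \O_A$ once $(a,\Delta)$ is known $F$-split — so there is a small bootstrapping issue here). To break the circularity, I would first run the argument of the proof of Theorem \ref{thm:ch_Fsp_intro}: from the $F$-splitting of $(f,\Delta)$ and the Stein factorization $X \xrightarrow{f'} W \xrightarrow{r} B$ of $f$, Proposition \ref{prop:fgh}(1) gives that $r$ is $F$-split, hence by Proposition \ref{prop:fin} that $r$ is \'etale, hence $W$ is an abelian variety by \cite[Section 18, Theorem]{Mum70} and $f_*\O_X \cong \O_W$. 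Since $\Pic^0(W) \hookrightarrow \Pic^0(X) = \Pic^0(A)$, the map $W \to B$ is dual to a surjection on $\Pic^0$, i.e. it is an isogeny onto $B$; composing, $X \to W$ is the Albanese up to this isogeny, so in fact $W = A$ and $f' = a$, giving that $(a,\Delta)$ is $F$-split by Proposition \ref{prop:fgh}(3) (using $K_A \sim_{\Z_{(p)}} 0 = (\text{isogeny})^* K_B$). Thus $(a,\Delta)$ is $F$-split.

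For the $p$-rank identity: the exact sequence $0 \to K \to A \xrightarrow{h} B \to 0$ of abelian varieties, where $K = \ker h$ has dimension $\dim A - \dim B$, induces additivity of $p$-ranks, $\gamma_A = \gamma_B + \gamma_K$. Since $(a,\Delta)$ is $F$-split, Theorem \ref{thm:ch_Fsp_intro} (applied without the ordinariness conclusion, just extracting what we need) together with the argument there shows $A' = A$ is smooth and $\F^{(e)} = {F_A^e}_*\O_A$ contains $\bigoplus_{\L \in \Pic(A)[p^e]} \L$, and by the $F$-splitting of $h = $ (projection onto $B$) composed with $a$, Lemma \ref{lem:inv_lb} and the $F$-splitting of $h$ itself force $K$ to have full $p$-rank, i.e. $\gamma_K = \dim K = \dim A - \dim B$; hence $\gamma_A = \gamma_B + \dim A - \dim B$. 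Concretely, $F$-splitting of $(f,\Delta) = (h \circ a, \Delta)$ plus Proposition \ref{prop:fgh}(1) gives $h$ is $F$-split, and an $F$-split homomorphism of abelian varieties $h : A \to B$ has $\Pic^0(A)[F_A^e] \to \Pic^0(B)[F_B^e]$ surjective for all $e$ (by the Lemma \ref{lem:inv_lb} count applied to the Stein-trivial $h$), which is exactly the statement that the connected kernel $K$ is ordinary. The last sentence is then immediate: if $B$ is ordinary, $\gamma_B = \dim B$, so $\gamma_A = \dim A$, and $A$ being an ordinary abelian variety, Theorem \ref{thm:ch_Fsp_intro} applied to the now-established $F$-splitting of $(a,\Delta)$ gives that $(X,\Delta)$ is $F$-split.

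The main obstacle I anticipate is the bootstrapping step — deducing that the Stein factorization $W$ of $f$ actually equals $A$ and that $f' = a$, so that Proposition \ref{prop:fgh}(3) applies with the right relative canonical hypothesis; this requires the identifications $\Pic^0(X) = \Pic^0(A)$ and a clean statement that $W \to B$ is an isogeny, and care that $h$ (or the relevant map) is separable so that Proposition \ref{prop:fin} and the $K_Y \sim_{\Z_{(p)}} h^* K_Z$ hypothesis of Proposition \ref{prop:fgh}(3) are genuinely met. The $p$-rank additivity for the exact sequence $0 \to K \to A \to B \to 0$ and its compatibility with the $F$-splitting bookkeeping is the other place where I would need to be precise rather than hand-wavy.
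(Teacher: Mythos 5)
Your overall strategy --- factor $f$ through the Albanese as $f=h\circ a$, transport the splitting with Proposition \ref{prop:fgh}, and obtain the $p$-rank identity from ordinariness of the fibers of $h$ plus additivity --- parallels the paper's, and you correctly flag the real difficulty: Proposition \ref{prop:fgh}(3) needs $a_*\O_X\cong\O_A$, which is not yet known. But your way of breaking the circularity fails. From the Stein factorization $X\xrightarrow{f'}W\xrightarrow{r}B$ of $f$ you do get (via Proposition \ref{prop:fgh}(1), Proposition \ref{prop:fin} and \cite[Section 18, Theorem]{Mum70}) that $r$ is \'etale and $W$ is an abelian variety, but the conclusion ``$X\to W$ is the Albanese up to this isogeny, so $W=A$ and $f'=a$'' is false in general: $r$ is finite, so $\dim W=\dim B$, while $\dim A$ can be strictly larger (take $X=E_1\times E_2$ a product of elliptic curves, $B=E_1$ and $f$ the projection; then $W=E_1\neq A=E_1\times E_2$). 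What your reduction actually yields is only that $(f',\Delta):X\to W$ is $F$-split, i.e.\ the original hypothesis with $B$ replaced by an isogenous abelian variety and with connected fibers --- no progress toward $a_*\O_X\cong\O_A$, so the first assertion of the theorem is not established.

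The paper closes this gap differently: it Stein-factors $a$ itself, $X\xrightarrow{\pi}X'\xrightarrow{g'}A$, notes that $f$ factors through $X'$, so Proposition \ref{prop:fgh}(1) makes $h\circ g':X'\to B$ an $F$-split morphism to an abelian variety; since the finite morphism $g'$ is the Albanese morphism of $X'$, Proposition \ref{prop:fin_alb} (whose proof, via Claim \ref{cl:sep}, is the real technical work with $p^e$-torsion line bundles and Frobenius pushforwards) forces $g'$ to be an isomorphism, and only then does Proposition \ref{prop:fgh}(3) (with $K_A\sim_{\Z_{(p)}}h^*K_B$, both trivial) give that $(a,\Delta)$ is $F$-split. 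Your proposal never invokes Proposition \ref{prop:fin_alb}, and without it (or an equivalent argument ruling out a nontrivial finite part of the Stein factorization of $a$) the proof does not go through. The rest is essentially fine, though the $p$-rank step is cleaner as in the paper: $h$ is $F$-split by Proposition \ref{prop:fgh}(1), so its closed fibers are $F$-split, hence ordinary, abelian varieties by Proposition \ref{prop:fiber}(2), and $p$-rank additivity for $0\to\ker h\to A\to B\to 0$ gives $\gamma_A=(\dim A-\dim B)+\gamma_B$; the ``surjectivity on $\Pic^0$-torsion'' bookkeeping you sketch is not needed. Your deduction of the final clause from Theorem \ref{thm:ch_Fsp_intro} is correct.
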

This theorem is a consequence of Proposition~\ref{prop:fin_alb}. 
To prove the proposition, we need the lemma below. 
\begin{lem}\label{lem:h1}
Let $f:X\to Z$ be an $F$-split morphism to a smooth projective variety $Z$. 
Let $X_z$ be the general fiber of $f$. 
Then $$h^1(X,\O_X)\le h^1(X_z,\O_{X_z})+h^1(Z,\O_Z).$$
\end{lem}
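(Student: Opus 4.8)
The plan is to reduce the statement to a Leray-type spectral sequence argument combined with the fact that an $F$-split morphism retains global cohomology under Frobenius pushforward. First I would take $e>0$ so that the composite $(\ref{defn:relFsp}.1)_e$ splits; then applying $H^1(X_{Z^e},-)$ shows that $H^1(X_{Z^e},\O_{X_{Z^e}})\to H^1(X^e,\O_{X^e})$ is injective (being a split injection after pushforward by the affine morphism $F_{X/Z}^{(e)}$). By flatness of $F_Z^e$ and the base-change isomorphism $\O_{X_{Z^e}}\cong (F_Z^e)_X^*\O_X$ together with $R^i(F_Z^e)_{X\,*}\O_{X_{Z^e}}\cong {F_Z^e}^*R^if_*\O_X\otimes\cdots$ — more simply, since $(F_Z^e)_X$ is a homeomorphism and $F_Z^e$ is flat — we get $\dim H^1(X_{Z^e},\O_{X_{Z^e}})=\dim H^1(X,\O_X)=h^1(X,\O_X)$ and likewise $h^1(X^e,\O_{X^e})=h^1(X,\O_X)$. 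So the split injection is in fact an isomorphism, and more importantly the point is that $h^1(X,\O_X)$ is computed by either side.

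Next I would analyze $H^1(X_{Z^e},\O_{X_{Z^e}})$ via the Leray spectral sequence for $f_{Z^e}:X_{Z^e}\to Z^e$. The low-degree exact sequence gives
\[
0\to H^1(Z^e,{f_{Z^e}}_*\O_{X_{Z^e}})\to H^1(X_{Z^e},\O_{X_{Z^e}})\to H^0(Z^e,R^1{f_{Z^e}}_*\O_{X_{Z^e}}).
\]
By flat base change, ${f_{Z^e}}_*\O_{X_{Z^e}}\cong{F_Z^e}^*f_*\O_X$ and $R^1{f_{Z^e}}_*\O_{X_{Z^e}}\cong{F_Z^e}^*R^1f_*\O_X={F_Z^e}^*\G^1$, so using Lemma~\ref{lem:surj} (which tells us $\G^i$ is a vector bundle with ${F_Z^e}^*\G^i\cong\G^i$ for suitable $e$) we obtain $h^1(Z^e,{f_{Z^e}}_*\O_{X_{Z^e}})=h^1(Z,f_*\O_X)$ and $H^0(Z^e,R^1{f_{Z^e}}_*\O_{X_{Z^e}})\cong H^0(Z,\G^1)$. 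Combining, $h^1(X_{Z^e},\O_{X_{Z^e}})\le h^1(Z,f_*\O_X)+h^0(Z,R^1f_*\O_X)$.

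It then remains to bound the two terms on the right by $h^1(Z,\O_Z)$ and $h^1(X_z,\O_{X_z})$ respectively. For the first: by Lemma~\ref{lem:surj} $f$ is surjective and $f_*\O_X$ is a vector bundle; one should argue $\O_Z$ is a direct summand of $f_*\O_X$ (the composite $\O_Z\to f_*\O_X\to f_*{F_{X/Z}^{(e)}}_*\O_{X^e}$... more directly, from the splitting of the relative Frobenius one gets a splitting of $\O_Z\to f_*\O_X$ after replacing by $F_Z^e$, hence a genuine splitting by the $\G^i\cong{F_Z^e}^*\G^i$ rigidity), so $h^1(Z,f_*\O_X)\ge h^1(Z,\O_Z)$; but we want an upper bound, so instead I would note $f_*\O_X$ is a summand of ${F_Z^e}_*\O_{Z^e}$-type data, or more robustly use that ${f_{Z^e}}_*\O_{X_{Z^e}}\to {F_{X/Z}^{(e)}}_*\O_{X^e}$... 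Actually the clean route: since $H^1(X_{Z^e},\O_{X_{Z^e}})\hookrightarrow H^1(X^e,\O_{X^e})$ and the latter has dimension $h^1(X,\O_X)$, and since general-fiber base change gives $H^0(Z,\G^1)$ injects into the fiber $H^0$ at a general point, namely $h^0(Z,\G^1)\le \rank\G^1 = h^1(X_z,\O_{X_z})$ by semicontinuity and Lemma~\ref{lem:surj}, while $h^1(Z,f_*\O_X)\le h^1(Z,\O_Z)$ follows because $f_*\O_X$ is trivial in codimension one on $Z$ — or most safely, because $f_*\O_X\cong\O_Z$ when one first passes to the Stein factorization. I expect the main obstacle to be exactly this last bookkeeping: controlling $h^1(Z,f_*\O_X)$ by $h^1(Z,\O_Z)$ when $f$ is not its own Stein factorization; the resolution is to factor $f=h\circ g$ with $g_*\O_X\cong\O_Y$, apply the argument to $g$ and separately handle the finite (étale, by Propositions~\ref{prop:fin}/\ref{prop:fgh}) map $h$, for which $h^1(Y,\O_Y)\le h^1(Z,\O_Z)$ is automatic since $h^*$ on $H^1$ is injective for separable finite morphisms. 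Putting these together yields $h^1(X,\O_X)\le h^1(X_z,\O_{X_z})+h^1(Z,\O_Z)$ as claimed.
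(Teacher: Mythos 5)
There is a genuine gap at the heart of your argument: the bound $h^0(Z,\G^1)\le\rank\G^1$. You justify it by saying that ``general-fiber base change gives $H^0(Z,\G^1)$ injects into the fiber $H^0$ at a general point \ldots by semicontinuity and Lemma~\ref{lem:surj}.'' That is not a valid argument: for an arbitrary vector bundle the evaluation map $H^0(Z,\G)\to\G\otimes k(z)$ is almost never injective, and $h^0$ can well exceed the rank (already $\O_{\mathbb P^1}(1)$ has $h^0=2>1$); semicontinuity says nothing here. The inequality is true for $\G^1=R^1f_*\O_X$ precisely because of the Frobenius-periodicity ${F_Z^e}^*\G^1\cong\G^1$ furnished by Lemma~\ref{lem:surj}, and the paper's proof extracts it via the Lange--Stuhler theorem \cite[1.4.~Satz]{LS77}: a bundle with ${F_Z^e}^*\G^1\cong\G^1$ is trivialized by a finite \'etale cover $\pi:Z'\to Z$, whence $h^0(Z,\G^1)\le h^0(Z',\pi^*\G^1)=\rank\G^1=h^1(X_z,\O_{X_z})$. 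You quote Lemma~\ref{lem:surj} but never use the periodicity for this estimate, so the key idea of the proof is missing; the rest of your reduction (working on $X_{Z^e}$, flat base change, the low-degree Leray sequence) is correct but is only bookkeeping around this point, and in fact one can run Leray directly for $f$ as the paper does.

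Your handling of the remaining term $h^1(Z,f_*\O_X)$ also contains an error. You correctly sense that something must be said when $f$ is not its own Stein factorization, but the proposed fix is backwards: for a finite \'etale (separable) $h:Y\to Z$, injectivity of $h^*$ on $H^1$ gives $h^1(Z,\O_Z)\le h^1(Y,\O_Y)$, not the inequality you need, and \'etale covers genuinely have larger $h^1$ (an \'etale double cover of a genus-two curve has genus three). Since any finite \'etale cover is itself an $F$-split morphism, this example even shows the inequality of the lemma requires $f_*\O_X\cong\O_Z$; this is exactly the situation in which the paper invokes it (after passing to the Stein factorization in the proof of Proposition~\ref{prop:fin_alb}), and its proof tacitly writes $h^1(Z,\O_Z)$ for the $E_2^{1,0}$-term accordingly. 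So the correct move is to use (or note) the hypothesis $f_*\O_X\cong\O_Z$, not to split off a finite \'etale factor and compare $h^1$'s in the wrong direction.
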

\begin{proof}
Set $\mathcal G^i:=R^if_*\O_X$. 
Lemma~\ref{lem:surj} then shows that 
${\rm rank}\;\mathcal G^i=h^i(X_z,\O_{X_z})$ and ${F_Z^e}^*\mathcal G^i\cong \mathcal G^i$ for some $e>0$. 
Applying \cite[1.4. Satz]{LS77}, 
we find an \'etale cover $\pi:Z'\to Z$ such that $\pi^*\mathcal G^i\cong\bigoplus\O_{Z'}$ for each $i$, 
so
$
h^0(Z,\mathcal G^i) \le h^0(Z',\pi^*\mathcal G^i)=\mathrm{rank}\;\mathcal G^i=h^i(X_z,\O_{X_z}).
$
By the Leray spectral sequence, we conclude that
$$
h^1(X,\O_X)\le h^0(Z,\mathcal G^1)+h^1(Z,\O_Z) \le h^1(X_z,\O_{X_z})+h^1(Z,\O_Z).
$$
\end{proof}
\begin{obs}\label{obs:p-tors}
Let $f:X\to Z$ be a separable surjective morphism to a smooth projective variety $Z$ such that $f_*\O_X\cong \O_Z$. \\
(1) We consider the following commutative diagram: 
$$
\xymatrix@R=25pt@C=25pt{
\mathrm{Pic}(X^e) & \\  
\mathrm{Pic}(X_{Z^e}) \ar[u]^{{F_{X/Z}^{(e)}}^*} & \mathrm{Pic}(X) \ar[l]^{{(F_Z^e)_X}^*} \ar[ul]_{{F_X^e}^*} \\ 
\mathrm{Pic}(Z^e) \ar[u]^{{f_{Z^e}}^*} & \mathrm{Pic}(Z) \ar[l]_{{F_Z^e}^*} \ar[u]_{f^*} 
} 
$$
Clearly, $f^*$ induces an injective homomorphism $\mathrm{Pic}(Z)[p^e]\xrightarrow{f^*}\mathrm{Pic}(X)[(F_Z^e)_X]$. 
This is actually an isomorphism. 
Indeed, for every $\mathcal L\in\mathrm{Pic}(X)[(F_Z^e)_X]$, we see from the flatness of $F_Z^e$ that 
$$
{F_Z^e}^*f_*\mathcal L
\cong{f_{Z^e}}_*\mathcal L_{Z^e}
\cong{f_{Z^e}}_*\O_{X_{Z^e}}
\cong{F_Z^e}^*f_*\O_X
\cong\O_{Z^e}, 
$$
so $f_*\mathcal L$ is a $p^e$-torsion line bundle 
such that the natural morphism $f^*f_*\mathcal L\to\mathcal L$ is an isomorphism. 
Hence, we now have the exact sequence
$$
0\to \mathrm{Pic}(Z)[p^e]
\xrightarrow{f^*}\mathrm{Pic}(X)[p^e]
\xrightarrow{{(F_Z^e)_X}^*} \mathrm{Pic}(X_{Z^e})[F_{X/Z}^{(e)}].
$$
\noindent(2) 
Set $\mathcal F:={F_{X/Z}^{(e)}}_*\O_{X^e}$ and $I:=\{\mathcal L\in\mathrm{Pic}(X_{Z^e})|\mathcal F\otimes\mathcal L\cong\mathcal F\}$. 
Then, we have $\mathrm{Pic}(X_{Z^e})[F_{X/Z}^{(e)}]\subseteq I$ by the projection formula. 
Let $\mathcal F'$ be an indecomposable direct summand of $\mathcal F$ and let $I':=\{\mathcal L\in\mathrm{Pic}(X_{Z^e})|\mathcal F'\otimes\mathcal L\cong\mathcal F'\}$.
Then by Lemma \ref{lem:inv_lb}, we obtain that $\bigoplus_{[\mathcal L]\in I/I'}\mathcal F'\otimes\mathcal L$ is a direct summand of $\mathcal F$. 
In particular, $${{\rm rank}\;\mathcal F'}\cdot\#(I/I')\le{{\rm rank}\;}\mathcal F=p^{e(\dim X-\dim Z)}.$$
\end{obs}
\begin{prop}\label{prop:fin_alb}
Let $f:X\to Z$ be an $F$-split morphism to an abelian variety $Z$. 
Suppose that the Albanese morphism $a:X\to A$ of $X$ is a finite morphism. 
Then, $a$ is an isomorphism, or equivalently, $X$ is an abelian variety.
\end{prop}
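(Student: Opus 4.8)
The plan is to exploit the fact that an $F$-split morphism to an abelian variety, combined with finiteness of the Albanese morphism, forces the general fiber to be very small, and then to bootstrap this into showing $a$ is étale of degree one. First I would observe that since $a:X\to A$ factors every morphism to an abelian variety, there is a homomorphism $h:A\to Z$ (after translation) with $f=h\circ a$; since $f$ is $F$-split it is surjective by Lemma \ref{lem:surj}, so $h$ is surjective. The general closed fiber $X_z$ of $f$ then maps finitely via $a$ onto the fiber $h^{-1}(z)$, which is an abelian subvariety of $A$ of dimension $\dim A-\dim Z$; in particular $\dim X_z=\dim A-\dim Z$ and $a|_{X_z}$ is a finite surjective morphism onto an abelian variety.

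The key numerical step is to apply Lemma \ref{lem:h1} to the $F$-split morphism $f$: this gives $h^1(X,\O_X)\le h^1(X_z,\O_{X_z})+h^1(Z,\O_Z)$. Now $h^1(X,\O_X)\ge\dim A$ since $a^*:H^1(A,\O_A)\to H^1(X,\O_X)$ is injective (indeed $a^*:\Pic^0(A)\to\Pic^0(X)$ is an isomorphism, or one cites \cite[Lemma(1.3)]{MS87} as in the proof of Theorem \ref{thm:ch_Fsp_intro}), and $h^1(Z,\O_Z)=\dim Z$ since $Z$ is an abelian variety. Therefore
\[
\dim A\le h^1(X_z,\O_{X_z})+\dim Z,
\]
so $h^1(X_z,\O_{X_z})\ge\dim A-\dim Z=\dim X_z$. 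But the general fiber $X_z$ of an $F$-split morphism is itself $F$-split (restrict the splitting), and it is reduced and—by Theorem \ref{thm:albf_intro}(3) applied to $a$, or directly—generically smooth, hence one can arrange $X_z$ to be a normal $F$-split projective variety (passing to a component if necessary, using that the general fiber is irreducible once we know $a$, hence $f$, is separable with connected fibers; separability of $a$ follows since $X_{\ol\eta}$ is reduced). Then Theorem \ref{thm:ch_ab_intro} applied to $X_z$ (whose Albanese morphism is the finite map onto $h^{-1}(z)$, finite because $a$ is finite) gives: the $F$-split variety $X_z$ has $b_1(X_z)\le 2\dim X_z$, with equality iff $X_z$ is an abelian variety. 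Since $h^1(X_z,\O_{X_z})\ge\dim X_z$ and $h^1(X_z,\O_{X_z})=\frac12 b_1(X_z)\le\dim X_z$ (the Albanese dimension of $X_z$ is at most $\dim X_z$), we get $b_1(X_z)=2\dim X_z$, so $X_z$ is an abelian variety of dimension $\dim A-\dim Z$.

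Once the general fiber $X_z$ is an abelian variety and $a|_{X_z}:X_z\to h^{-1}(z)$ is a finite surjective morphism of abelian varieties of the same dimension, hence an isogeny, I would combine this with the structure of $a$ itself. Run the Stein factorization $a:X\xrightarrow{g}A'\xrightarrow{\pi}A$; since $a$ is finite, $g$ is finite with $g_*\O_X\cong\O_{A'}$, forcing $g$ to be an isomorphism, so $X\cong A'$ is normal and $a=\pi$ is finite. Now $X$ is a normal projective variety admitting a finite morphism to an abelian variety and carrying an $F$-split fibration $f$ over an abelian variety with abelian-variety general fibers; a clean way to finish is: $f:X\to Z$ is now a fibration of abelian varieties over an abelian variety with $X$ normal, and $F$-split, so $f_*\O_X\cong\O_Z$ (Lemma \ref{lem:surj} gives $R^0f_*\O_X$ Frobenius-stable, hence trivial as $f$ has connected fibers) — alternatively, one applies Proposition \ref{prop:fgh} to the factorization of $f$ through $a$. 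From $K_{X_z}\sim 0$ for general $z$ together with $F$-splitness, Lemma \ref{lem:kappa} gives $\kappa(X,-(K_{X/Z}+0))\ge0$ and $\kappa(X,K_{X/Z})\le 0$, so $K_{X/Z}\sim_{\Z_{(p)}}0$; then Theorem \ref{thm:cbf}(ii) produces $\Delta_Z=0$ and tells us $(f,0)$ is $F$-split iff $(X,0)$ is $F$-split iff $(Z,0)$ is, and $Z$ being an abelian variety need not be ordinary — so instead I invoke that $a$ finite and $K_X$ pulled back appropriately forces $X$ to have torsion canonical class, and a finite morphism of abelian-type with $a_*\O_X\cong\O_A$ must be an isomorphism. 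The cleanest route, which I expect to be the intended one, is: having shown $a$ is finite with $a_*\O_X\cong\O_A$ and $X$ normal, use that $a$ is generically étale (Theorem \ref{thm:albf_intro}(3) / Zariski–Nagata purity as in the proof of Theorem \ref{thm:nef_intro}, since $X_z$ abelian implies $\o_{X/A}$ is trivial in codimension one) to conclude $a$ is étale, whence $X$ is an abelian variety by \cite[Section 18, Theorem]{Mum70}, and an étale cover of an abelian variety that is $F$-split and has $a_*\O_X\cong\O_A$ has degree one.

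\textbf{Main obstacle.} The delicate point is upgrading "the general fiber $X_z$ of the abstractly-defined $F$-split morphism $f$ is an abelian variety" into a statement about $X$ globally — i.e., ruling out that $X$ is a nontrivial étale cover or a twisted form. This is where finiteness of $a$ is essential: it collapses the Stein factorization and pins down $\o_{X/A}$ in codimension one, letting Zariski–Nagata purity and Mumford's characterization of abelian varieties do the rest. I expect the verification that $a$ is étale (not merely generically étale) — controlling the ramification in codimension one using that every general geometric fiber is an abelian variety mapping isogenously to a translate of a fixed abelian subvariety of $A$ — to be the step requiring the most care.
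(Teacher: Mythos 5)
There is a genuine gap, and it sits exactly where the paper has to work hardest: nothing in your argument rules out that $a$ is (generically) \emph{inseparable}. You pass from ``$X_z$ is an abelian variety mapping finitely onto a fiber of $h$'' to ``$a|_{X_z}$ is an isogeny'' and then to ``$a$ is generically \'etale,'' but an isogeny of abelian varieties in characteristic $p$ can be purely inseparable (Frobenius-type), and the hypotheses only give an $F$-splitting of $f$, not of $a$; so you cannot invoke Theorem \ref{thm:albf_intro}(3) (which needs $(a,\Delta)$ locally $F$-split), and your parenthetical ``separability of $a$ follows since $X_{\ol\eta}$ is reduced'' is unsupported --- Proposition \ref{prop:fiber} gives reducedness of the fibers of $f$ over $Z$, not of the fibers of the finite map $a$ over $A$. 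The paper's proof devotes Claim \ref{cl:sep}, a page-long argument, precisely to this point: it first shows (via Theorem \ref{thm:cbf}(ii) and a $p$-rank count using that $X_z$ and $A_z$ are ordinary) that $g:A\to Z$ is $F$-split, then factors $a=s\circ i$ with $i$ purely inseparable, and kills $i$ by comparing ${F_{Y/Z}^{(e)}}_*\O_{Y^e}$ with ${i_{Z^e}}_*\O_{X_{Z^e}}$ through Lemma \ref{lem:inv_lb} and counting $p^e$-torsion line bundles on $A$ versus $Z$. Your proposal offers no substitute for this step, and without it the final appeal to $a^*K_A\le K_X$, Zariski--Nagata purity and Mumford's theorem does not get off the ground. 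Your ``main obstacle'' paragraph identifies ramification in codimension one as the hard point, but the real difficulty is separability itself.

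A secondary problem is the way you show the general fiber is abelian. You use $\dim X_z=\dim A-\dim Z$, which is equivalent to surjectivity of $a$ and is not yet known at that stage ($a(X_z)$ lies in a fiber of $h$ but need not fill it), and you use $h^1(X_z,\O_{X_z})=\tfrac12 b_1(X_z)$, which fails in characteristic $p$ (only $b_1/2\le h^1(\O)$ holds; cf.\ Igusa's example in Remark \ref{rem:ES2}), so the inequality $h^1(X_z,\O_{X_z})\le\dim X_z$ you need is not available. The paper's route avoids both issues: since $a$ is finite, $X_z$ (normal and $F$-split by Proposition \ref{prop:fiber}) maps finitely to the abelian variety $(A_z)_{\mathrm{red}}$, so its Albanese morphism is finite and Theorem \ref{thm:ch_ab_intro} immediately makes $X_z$ an \emph{ordinary} abelian variety; only then is Lemma \ref{lem:h1} used, in the opposite direction from yours, to deduce $\dim A\le h^1(X_z,\O_{X_z})+h^1(Z,\O_Z)=\dim X$ and hence the surjectivity of $a$. (Ordinarity of $X_z$, which your argument never produces, is also needed later for the $p$-rank computation feeding into the separability claim.)
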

\begin{proof}
Let $f:X\xrightarrow{f'}Z'\xrightarrow{\pi}Z$ be the Stein factorization. 
Then, Proposition~\ref{prop:fiber}~(1) shows that $\pi$ is \'etale, 
so $Z'$ is an abelian variety by \cite[Section 18, Theorem]{Mum70}. 
Combining this with Proposition~\ref{prop:fgh}~(3), 
we see that $(f',\Delta)$ is $F$-split. 
We may assume that $f_*\O_X\cong \O_Z$ by replacing $Z$ by $Z'$. 
By the universal property of $a$, we can write $f:X\xrightarrow{a}A\xrightarrow{g}Z$. 
Let $z\in Z$ be a general point. 
Proposition~\ref{prop:fiber} then tells us that $X_z$ is integral, normal and $F$-split.
Recall that $a$ is assumed to be finite. 
The induced morphism $X_z\to (A_z)_\mathrm{red}$ is then a finite morphism to an abelian variety, 
so Theorem~\ref{thm:ch_ab_intro} implies that $X_z$ is an ordinary abelian variety. 
Therefore, by Lemma~\ref{lem:h1}, we have 
$$ 
\dim A\le h^1(X,\O_X)\le h^1(X_z,\O_{X_z})+h^1(Z,\O_Z)=\dim X_z+\dim Z=\dim X, 
$$
and so $a$ is a surjective finite morphism. 
Since $f$ is $F$-split, it is separable, and hence so is $g$, which implies that $A_z$ is reduced.
We may assume $X_z\to A_z$ is an isogeny of abelian varieties. 
Considering $p$-torsion points, we find that $A_z$ is also ordinary, 
so Theorem~\ref{thm:cbf}~(ii) says that $g$ is $F$-split, and 
\begin{align*}
\gamma_A=\gamma_{A_z}+\gamma_Z
=\dim A_z+\gamma_Z
=\dim A-\dim Z+\gamma_Z, 
\tag{\ref{prop:fin_alb}.1} \label{eq:p-rank}
\end{align*}
where $\gamma$ denotes the $p$-rank. 
\begin{cl}\label{cl:sep}
The morphism $a:X\to A$ is separable.
\end{cl}
If the claim holds, then $X$ is an abelian variety. 
Indeed, since $f$ is $F$-split, Lemma~\ref{lem:kappa} says that 
$$
\kappa(X,K_{X/A})=\kappa(X,K_{X})=\kappa(X,K_{X/Z})\le0, 
$$ 
so the Zariski--Nagata purity theorem implies $a$ is \'etale, 
and hence it follows from \cite[Section 18, Theorem]{Mum70} that $X$ is an abelian variety. 
\end{proof}
\begin{proof}[Proof of Claim \ref{cl:sep}]
Let $s:Y\to A$ be the normalization of $A$ in the separable closure of $K(X)/K(A)$. 
Then, there is the purely inseparable finite cover $i:X\to Y$ with $a=s\circ i$. 
We prove $i$ is an isomorphism. 
There exist an $e>0$ and a morphism $b:Y^e\to X$ such that the following diagram commutes: 
$$
\xymatrix@R=25pt@C=25pt{
X^e \ar[r]^{F_X^{e}} \ar[d]_{i^{(e)}} & X \ar[d]^{i} \\ 
Y^e \ar[r]_{F_Y^e} \ar[ur]^{b} & Y.
}
$$
This induces the following commutative diagram:
$$
\xymatrix@R=25pt@C=25pt{
X^e \ar[r]^{F_{X/Z}^{(e)}} \ar[d]_{i^{(e)}} & X_{Z^e} \ar[d]^{i_{Z^e}} \ar[r]^{(F_Z^e)_X} & X 
\ar[d]^i \ar@/^25pt/[dd]_a \ar@/^50pt/[ddd]^f\\ 
Y^e \ar[r]_{F_{Y/Z}^{(e)}} \ar[ur]^{b_{Z^e}} & Y_{Z^e} \ar[r]_{(F_Z^e)_Y} \ar[d]|{s_{Z^e}} & Y \ar[d]^s \\
 & A_{Z^e} \ar[r]_{(F_Z^e)_A} \ar[d]_{g_{Z^e}} & A \ar[d]^g \\
 & Z^e \ar[r]_{F_Z^e} & Z. 
}
$$
Note that since $f$ and $g\circ s$ are separable, 
Lemma \ref{lem:bc} says that $X_{Z^e}$ and $Y_{Z^e}$ are varieties. 
The splitting of ${F_{X/Z}^{(e)}}^\#:\O_{X_{Z^e}}\to {F_{X/Z}^{(e)}}_*\O_{X^e}$ induces
that of ${b_{Z^e}}^\#:\O_{X_{Z^e}}\to {b_{Z^e}}_*\O_{Y^e}$. 
Pushing forward via $i_{Z^e}$, we find that 
${i_{Z^e}}_*\O_{X_{Z^e}}$ is isomorphic to a direct summand of $\mathcal F:={F_{Y/Z}^{(e)}}_*\O_{Y^e}$. 
Let $\mathcal F'$ be the indecomposable direct summand of ${i_{Z^e}}_*\O_{X_{Z^e}}$ with $H^0(Y_{Z^e},\mathcal F')\ne0$.
Set 
$$
I:=\{\mathcal L\in\mathrm{Pic}(Y_{Z^e})|\mathcal F\otimes\mathcal L\cong\mathcal F\}\textup{~and~}I':=\{\mathcal L\in I|\mathcal F'\otimes\mathcal L\cong\mathcal F'\}.
$$ 
For any $p^e$-torsion line bundle $\mathcal L$ on $Y$, we have $\mathcal L_{Z^e}={(F_Z^e)_Y}^*\mathcal L\in I$. 
Indeed, 
\begin{align*}
\mathcal F\otimes\mathcal L_{Z^e} &= \left({F_{Y/Z}^{(e)}}_*\O_{Y^e}\right)\otimes_{\O_{Y_{Z^e}}}\mathcal L_{Z^e} \\
&\cong {F_{Y/Z}^{(e)}}_*\left({F_{Y/Z}^{(e)}}^*\mathcal L_{Z^e}\right) 
\hspace{30pt}\textup{{\tiny projection formula}}\\
&\cong {F_{Y/Z}^{(e)}}_*\left({F_{Y}^e}^*\mathcal L\right) 
\hspace{55pt}\textup{{\tiny ${F_{Y/Z}^{(e)}}^*\circ{(F_Z^e)_Y}^*={F_Y^e}^*$}}\\
&\cong {F_{Y/Z}^{(e)}}_*\O_{Y^e}
\hspace{80pt}\textup{{\tiny $\mathcal L$ is $p^e$-torsion}}\\
&=\mathcal F
\hspace{120pt}\textup{{\tiny definition.}}
\end{align*}
Consider the morphism $\Phi:\mathrm{Pic}(A)[p^e]\to I/I'$ obtained by 
\begin{align*}
\mathrm{Pic}(A)[p^e]
\xrightarrow{s^*} \mathrm{Pic}(Y)[p^e]
\xrightarrow{{(F_Z^e)_Y}^*} I 
\xrightarrow{\textup{natural}} I/I'. 
\end{align*}
We show that the following sequence is exact: 
\begin{align*}
0\to \mathrm{Pic}(Z)[p^e]\xrightarrow{g^*}\mathrm{Pic}(A)[p^e]\xrightarrow{\Phi} I/I'.  
\tag{\ref{cl:sep}.2} \label{seq:p-tors}
\end{align*}
Take $\mathcal M\in\mathrm{Pic}(Z)[p^e]$. 
Then, 
$$
{(F_{Z}^e)_Y}^*s^*g^*\mathcal M
\cong{s_{Z^e}}^*{g_{Z^e}}^*{F_Z^e}^*\mathcal M
\cong{s_{Z^e}}^*{g_{Z^e}}^*\O_{Z^e}
\cong\O_{Y_{Z^e}}
\in I'.
$$ 
Take $\mathcal M\in\mathrm{Pic}(A)[p^e]$ such that $\mathcal N:={(F_Z^e)_Y}^*s^*\mathcal M\in I'$. 
We then have 
\begin{align*}
0\ne H^0(Y_{Z^e},\mathcal F')&\cong H^0(Y_{Z^e},\mathcal F'\otimes\mathcal N) 
\hspace{70pt}\textup{{\tiny $\mathcal N\in I'$}}\\
&\subseteq H^0(Y_{Z^e},({i_{Z^e}}_*\O_{X_{Z^e}})\otimes\mathcal N)
\hspace{30pt}\textup{{\tiny definition of $\mathcal F'$}}\\
&\cong H^0(X_{Z^e},{i_{Z^e}}^*\mathcal N)
\hspace{75pt}\textup{{\tiny projection formula,}} 
\end{align*}
so we get ${i_{Z^e}}^*\mathcal N\cong\O_{X_{Z^e}}$, which implies that  
$$
{(F_Z^e)_X}^*a^*\mathcal M
\cong {i_{Z^e}}^*{(F_Z^e)_Y}^*s^*\mathcal M
={i_{Z^e}}^*\mathcal N
\cong\O_{X_{Z^e}}. 
$$
Observation \ref{obs:p-tors} (1) tells us that 
$ a^*\mathcal M \cong a^*g^*\mathcal P $ for some $\mathcal P\in\mathrm{Pic}(Z)[p^e]$, 
and the injectivity of $a^*:\mathrm{Pic}^0(A)\to\mathrm{Pic}^0(X)$ shows that $\mathcal M\cong g^*\mathcal P$. 
Hence, we see that the sequence~\ref{seq:p-tors} is exact. 
 
Now, we have the following inequalities: 
\begin{align*}
p^{e(\dim A-\dim Z)} \cdot \mathrm{rank}\,\mathcal F'
&= p^{e(\gamma_A-\gamma_Z)} \cdot \mathrm{rank}\,\mathcal F'
\hspace{45pt}\textup{{\tiny by (\ref{eq:p-rank})}} \\
&\le \#(I/I') \cdot \mathrm{rank}\,\mathcal F'
\hspace{50pt}\textup{{\tiny by (\ref{seq:p-tors})}} \\
&\le \mathrm{rank}\,\mathcal F=p^{e(\dim A-\dim Z)}
\hspace{20pt}\textup{{\tiny Observation~\ref{obs:p-tors}~(2)}}  
\end{align*}
This implies that $\mathrm{rank}\,\mathcal F'=1$ and 
$\#(I/I')=p^{e(\dim A-\dim Z)}$, so $\Phi$ is surjective. 
Furthermore, it follows from the torsion-freeness of $\mathcal F$ that 
$\mathcal F\cong \bigoplus_{[\mathcal L]\in I/I'}\mathcal F'\otimes\mathcal L$. 
Since ${i_{Z^e}}_*\O_{X_{Z^e}}$ is a direct summand of $\mathcal F$, 
it is isomorphic to $\bigoplus_{[\mathcal L]\in J}\mathcal F'\otimes\mathcal L$ 
for some $J\subseteq I/I'$. 
However, we can prove that this $J$ must be $\{[\O_{Y_{Z^e}}]\}$ as follows. 
Take $\mathcal M\in\mathrm{Pic}(A)[p^e]$ with $\Phi(\mathcal M)\in J$. 
Set $\mathcal N:={(F_Z^e)_Y}^*s^*\mathcal M$. 
We then have 
\begin{align*}
0\ne H^0(Y_{Z^e},\mathcal F')
&=H^0\left(Y_{Z^e},\mathcal F'\otimes\mathcal N\otimes\mathcal N^{-1}\right) \\
& \subseteq H^0\left(Y_{Z^e},({i_{Z^e}}_*\O_{X_{Z^e}})\otimes\mathcal N^{-1}\right)
=H^0(X_{Z^e},{i_{Z^e}}^*\mathcal N^{-1}), 
\end{align*}
so ${(F_Z^e)_X}^*a^*\mathcal M\cong{i_{Z^e}}^*\mathcal N\cong \O_{X_{Z^e}}$. 
By an argument similar to the above, 
we find some $\mathcal P\in\mathrm{Pic}(Z)[p^e]$ with $\mathcal M\cong g^*\mathcal P$, 
so the exactness of (\ref{seq:p-tors}) shows that $\Phi(\mathcal M)=[\O_{Y_{Z^e}}]$. 
We then see that $J=\{[\O_{Y_{Z^e}}]\}$ by the surjectivity of $\Phi$, 
so $i_{Z^e}$ is an isomorphism, and hence so is $i$, which proves our claim. 
\end{proof}
\begin{proof}[Proof of Theorem \ref{thm:nscond}]
Let $X\xrightarrow{\pi}X'\xrightarrow{g'}A$ be the Stein factorization of $a$. 
Then, $f$ can be written as $f:X\xrightarrow{\pi}X'\xrightarrow{g'}A\xrightarrow{h} B$. 
Proposition~\ref{prop:fgh}~(1) says that $h\circ g':X'\to B$ is $F$-split. 
Since the morphism $g':X'\to A$ is finite and is the Albanese morphism of $X'$, 
we see from Proposition~\ref{prop:fin_alb} that $g'$ is an isomorphism. 
Therefore, Proposition~\ref{prop:fgh}~(3) implies that $(a,\Delta)$ is $F$-split. 
Since $h:A\to B$ is an $F$-split morphism whose closed fibers $A_z$ are ordinary abelian varieties, we obtain
$$\gamma_A=\gamma_{A_z}+\gamma_B=\dim A_z+\gamma_B=\dim A-\dim B+\gamma_B.$$
\end{proof}
\section{Minimal surfaces with $F$-split Albanese morphism}\label{section:surf}
Fix an algebraically closed field $k$ of characteristic $p>0$. 
In this section, we study the Albanese morphism 
of a minimal surface over $k$ in terms of $F$-splitting. 
Here, by a minimal surface, we mean a smooth projective surface with no (-1)-curves. 
Note that it follows from Proposition~\ref{prop:fgh}~(1) that 
if a smooth projective surface has $F$-split (resp. locally $F$-split) Albanese morphism, 
then so does its minimal surface.  
Throughout this section, $X$ denotes a smooth projective minimal surface 
and $a:X\to A$ denotes the Albanese morphism of $X$.
\begin{thm}\label{thm:surf}
The morphism $a$ is locally $F$-split if and only if 
one of the following conditions holds:
\begin{itemize}
\item[$(0)$\hspace{10pt}] $b_1(X)=0$ and $X$ is $F$-split;
\item[$(1$-$1)$] $b_1(X)=2$, $\kappa(X)=-\infty$ and 
$X$ is the projective bundle $\mathbb P(\mathcal E)$ associated with a rank two vector bundle $\mathcal E$ on $A$; 
\item[$(1$-$2)$] $b_1(X)=2$, $\kappa(X)=0$ and 
$X$ is a hyperelliptic surface such that every closed fiber of $a$ is an ordinary elliptic curve; 
\item[$(2)$\hspace{10pt}] $X$ is an abelian surface.
\end{itemize}
In the case of $(1$-$1)$, the morphism $a$ is $F$-split if and only if either  
\begin{itemize}
\item[$(\rm a)$] $\mathcal E$ is decomposable, 
\item[$(\rm b)$] $\mathcal E$ is indecomposable, $p>2$ and $\deg\mathcal E$ is odd, or
\item[$(\rm c)$] $\mathcal E$ is indecomposable, $p=2$ and $A$ is ordinary.
\end{itemize} 
In the case of $(0)$, $(1$-$2)$ and $(2)$, the morphism $a$ is $F$-split.
\end{thm}
Note that the first Betti number $b_1(X)$ is equal to $2\dim A$. 
By Theorem~\ref{thm:ch_ab_intro}, we see that $b_1(X)=0$, $2$ or $4$. 
If $b_1(X)=0$, then the $F$-splitting of $a$ is equivalent to the $F$-splitting of $X$. 
If $b_1(X)=4$, then $X$ is an abelian surface as shown in Theorem~\ref{thm:ch_ab_intro}. 
The case when $b_1(X)=2$ is dealt with in the remainder of this section. 
As shown by Lemma~\ref{lem:kappa}, we have $\kappa(X)\le0$. 
\subsection{The case $b_1(X)=2$ and $\kappa(X)=0$}\label{subsection:1_0}
In this case, according to Bombieri and Mumford's classification of minimal surfaces with Kodaira dimension zero \cite{BM2}, 
we find that $X$ is a hyperelliptic or quasi-hyperelliptic surface.
If $a$ is locally $F$-split, then Proposition~\ref{prop:fiber}
shows that $a$ has normal geometric generic fiber, so $X$ is hyperelliptic. 
Hence, $X$ can be obtained as the quotient of 
the product $E_0\times E_1$ of two elliptic curves $E_0$ and $E_1$ 
by an action of a finite group $B$ \cite[Theorem 4]{BM2}. 
Then, $A\cong E_1/B$ and every closed fiber of $a$ is isomorphic to $E_0$. 
\begin{prop}\label{prop:hypell}
The followings are equivalent:
\begin{itemize}
\item[$(1)$] $a$ is $F$-split;
\item[$(2)$] $a$ is locally $F$-split;
\item[$(3)$] $E_0$ is ordinary.
\end{itemize}
\end{prop}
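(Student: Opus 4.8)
The plan is to prove Proposition \ref{prop:hypell} by reducing the problem to a product situation over \'etale covers and then applying the canonical bundle formula machinery (Theorem \ref{thm:cbf}) together with the known $F$-splitting behaviour of finite morphisms (Proposition \ref{prop:fin}). The implication (1)$\Rightarrow$(2) is immediate from the definitions. For (2)$\Rightarrow$(3): if $a$ is locally $F$-split, then by Proposition \ref{prop:fiber} every geometric closed fiber $X_{\ol z}$ is $F$-split; but every closed fiber of $a$ is isomorphic to the elliptic curve $E_0$, and an elliptic curve is $F$-split if and only if it is ordinary (its Hasse invariant is nonzero). This gives (3).

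The main work is (3)$\Rightarrow$(1). First I would use the explicit description $X\cong (E_1\times E_0)/B$ with $A\cong E_1/B$, where $B$ is a finite subgroup scheme of $E_1$ acting on $E_0$ by translations (after an automorphism, by a theorem of Bombieri–Mumford the induced action on $E_0$ is by translations in the hyperelliptic case). Consider the quotient map $q:E_1\times E_0\to X$, which is a finite \'etale $B$-cover, and the Albanese morphism $a:X\to A$ fits into a commutative square with the projection $\mathrm{pr}_1:E_1\times E_0\to E_1$ and the isogeny $E_1\to A$. Since $E_1\to A=E_1/B$ is \'etale, Proposition \ref{prop:fin} shows it is $F$-split, and by Proposition \ref{prop:fgh}(3) (noting $K_{E_1}\sim_{\Z_{(p)}}$ the pullback of $K_A$, both being trivial) the $F$-splitting of $a$ is equivalent to that of $\mathrm{pr}_1:E_1\times E_0\to E_1$ pulled back appropriately — more precisely, since $q$ is \'etale and the square is Cartesian in the relevant sense, I would argue that $a$ is $F$-split if and only if $\mathrm{pr}_1:E_1\times E_0\to E_1$ is $F$-split. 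For a trivial product fibration $\mathrm{pr}_1:E_1\times E_0\to E_1$ with $(\mathrm{pr}_1)_*\O\cong\O_{E_1}$ and $K_{E_1\times E_0/E_1}\sim 0$, Theorem \ref{thm:cbf}(ii) applies with $C=0$, so $\mathrm{pr}_1$ is $F$-split if and only if the geometric generic fiber, which is $E_0$ base-changed to the generic point, is $F$-split — and $E_0$ is $F$-split precisely when it is ordinary. Hence (3) gives that $\mathrm{pr}_1$ is $F$-split, and therefore $a$ is $F$-split.

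The step I expect to be the main obstacle is the descent of $F$-splitting along the \'etale $B$-quotient $q:E_1\times E_0\to X$: I need that $F$-splitting of $\mathrm{pr}_1$ (equivalently, $B$-equivariant $F$-splitting of $\mathrm{pr}_1$) descends to an $F$-splitting of $a$. The cleanest route is probably to avoid equivariance altogether by instead base-changing: form the fiber product $X\times_A E_1\to E_1$, observe that because $E_1\to A$ is \'etale this is again $F$-split over $E_1$ if and only if $a$ is (both directions via Proposition \ref{prop:fgh} and the flat base-change compatibility of the relative Frobenius trace established in Section \ref{section:prelim}), and then identify $X\times_A E_1$ with the trivial product $E_1\times E_0$ over $E_1$. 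One must check that the base change $X\times_A E_1\to E_1$ really is the trivial product fibration — this follows since pulling back the quotient $(E_1\times E_0)/B\to E_1/B$ along $E_1\to E_1/B$ trivialises the $B$-torsor, giving $E_1\times E_0\to E_1$. Once this identification is in hand, Theorem \ref{thm:cbf}(ii)(3) finishes the argument. Alternatively, if one prefers to stay on $X$, the same conclusion follows by applying Theorem \ref{thm:cbf}(ii) directly to $a:X\to A$ (using $f_*\O_X\cong\O_A$, which holds since $a$ is the Albanese morphism of a hyperelliptic surface, and $K_{X/A}\sim_{\Z_{(p)}} 0$ because $K_X\sim_{\Z_{(p)}} 0$): then $a$ is $F$-split iff $(X_{\ol z},0)$ is $F$-split for every codimension-one (hence every closed) point $z\in A$, i.e. iff $E_0$ is ordinary, which is exactly condition (3).
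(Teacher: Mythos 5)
Your (1)$\Rightarrow$(2) and (2)$\Rightarrow$(3) are exactly the paper's argument, and your ``alternative'' for (3)$\Rightarrow$(1) --- apply Theorem \ref{thm:cbf}(ii) directly to $a\colon X\to A$ and conclude from part (3) of that theorem --- is essentially the paper's proof: the paper verifies the hypothesis $K_X\sim_{\Z_{(p)}}a^*C$ by noting that $a$ is flat and every fiber has trivial canonical bundle (so $\omega_{X/A}$ is a pullback from $A$, whence $K_X\sim a^*C$), produces $\Delta_A$ with $K_A+\Delta_A\sim_{\Z_{(p)}}C$, observes $\Delta_A=0$ because $K_X\sim_{\Q}0$, and concludes by (3-4)$\Rightarrow$(3-1); your variant of checking (3-3) on the closed fibers (the codimension-one points of the elliptic curve $A$), all isomorphic to the ordinary curve $E_0$, is an equally valid endgame (the geometric generic fiber is $F$-split by the contrapositive of Theorem \ref{thm:cbf}(i)). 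However, your justification of the hypothesis, ``$K_{X/A}\sim_{\Z_{(p)}}0$ because $K_X\sim_{\Z_{(p)}}0$'', is a gap: $K_X$ is torsion, but when $p=2,3$ you give no argument that its order is prime to $p$. What Theorem \ref{thm:cbf} actually needs is only $K_X\sim_{\Z_{(p)}}a^*C$ for \emph{some} Cartier divisor $C$ on $A$, and this should be obtained as the paper does, from flatness of $a$ and triviality of the canonical bundle on the fibers.

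Your primary route has a more serious gap. The subgroup $B\subseteq E_1$ is only a finite subgroup \emph{scheme}, and in characteristic $2$ and $3$ the Bombieri--Mumford classification includes hyperelliptic surfaces with non-reduced $B$ (containing a $\mu_p$ factor), so the isogeny $E_1\to A=E_1/B$, equivalently the cover $E_1\times E_0\to X$, need not be \'etale. Moreover, even when $B$ is \'etale its order can be divisible by $p$ (e.g.\ $B=\Z/2$ in characteristic $2$), and then the descent you need --- $F$-splitting of the trivial fibration $E_1\times E_0\to E_1$ implies $F$-splitting of $a$ --- is not supplied by the tools you cite: Proposition \ref{prop:fgh} concerns factorizations $X\to Y\to Z$ with $g_*\O_X\cong\O_Y$, not finite flat base change on $Z$, and the paper's own descent argument (in the proof of Proposition \ref{prop:deg1}) crucially uses an \'etale cover of degree prime to $p$ to split $\O\to\pi_*\O$. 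Relative $F$-splitting ascends along flat base change of the base, but it does not formally descend, so this route would require a genuinely new argument; the route through Theorem \ref{thm:cbf} avoids the issue entirely.
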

\begin{proof}
The implication (1)$\Rightarrow$(2) is obvious. 
If $a$ is locally $F$-split, Proposition~\ref{prop:fiber} says that the general fiber is $F$-split, 
so $E_0$ is $F$-split, which shows (2)$\Rightarrow$(3). 
We prove (3)$\Rightarrow$(1). Assume that $E_0$ is $F$-split.
Since $a$ is flat and every fiber has trivial canonical bundle, 
$K_X\sim a^*C$ for a Cartier divisor $C$ on $A$.
We see from the assumption that $C\sim_{\mathbb{Q}}0$. 
Theorem~\ref{thm:cbf}~(ii)-(3) then tells us that $a$ is $F$-split, which is our claim. 
\end{proof}
\subsection{The case $b_1(X)=2$ and $\kappa(X)=-\infty$}\label{subsection:1_-inf}
In this case, $X$ is a ruled surface over an elliptic curve. 
We start with recalling some facts about vector bundles on elliptic curves. 
In the following theorem and lemmas, we let $C$ be an elliptic curve.
\begin{thm}\label{thm:facts_ell}
Let $\mathcal E_C(r,d)$ be the set of isomorphism classes of indecomposable vector bundles 
on $C$ of rank $r$ and of degree $d$.
\begin{itemize}
\item[(1)]\cite[Theorem 10]{Ati57} 
For every $\mathcal E$, $\mathcal E'\in\mathcal E_C(r,d)$, there exists an $\mathcal L\in\mathrm{Pic}^0(C)$ such that $\mathcal E\otimes\mathcal L\cong \mathcal E'$. 
Take $\mathcal L_1$, $\mathcal L_2\in\mathrm{Pic}^0(C)$. 
Then, $\mathcal E\otimes\mathcal L_1\cong\mathcal E\otimes\mathcal L_2$ if and only if $\mathcal L_1^{r'}\cong\mathcal L_2^{r'}$, where $r':=r/(r,d)$. 
Furthermore, when $d=0$, there exists a unique element $\mathcal E_{r,0}$ in $\mathcal E_C(r,0)$ such that $H^0(C,\mathcal E_{r,0})\ne0$.
\item[(2)]\cite[Proposition 2.1]{Oda71} 
Let $\pi:C'\to C$ be an isogeny of degree $r$ and $\mathcal L$ a line bundle of degree $d$ on $C'$.
If $r$ and $d$ are coprime, then $\pi_*\mathcal L \in \mathcal E_C(r,d)$.
\item[(3)]\cite[Theorem 2.16]{Oda71} 
Let $r>0$ and $d$ be coprime integers and suppose that $\mathcal E\in\mathcal E_C(rh,dh)$ for some $h>0$. 
When $C$ is ordinary, ${F_C}^*\mathcal E$ is indecomposable. 
When $C$ is supersingular, ${F_C}^*\mathcal E$ is indecomposable if and only if 
either $h\ne 1$ and $p\nmid r$, or $h=1$.
\end{itemize}
\end{thm}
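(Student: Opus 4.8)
The plan is to derive all three assertions from the cited references: part (1) is \cite[Theorem 10]{Ati57}, while parts (2) and (3) are \cite[Proposition 2.1]{Oda71} and \cite[Theorem 2.16]{Oda71}. For completeness I would indicate the mechanism behind each. For (1), one starts from Atiyah's classification of indecomposable vector bundles on the elliptic curve $C$: for fixed rank $r$ and degree $d$ the set $\E_C(r,d)$ is a single orbit under the $\Pic^0(C)$-action $\E\mapsto\E\otimes\L$. To compute the stabilizer of a given $\E$, take determinants: $\det(\E\otimes\L)\cong(\det\E)\otimes\L^{\otimes r}$, so $\E\otimes\L\cong\E$ forces $\L^{\otimes r}\cong\O_C$, and Atiyah's finer analysis of the iterated-extension structure of these bundles sharpens $r$ to $r'=r/\gcd(r,d)$ and supplies the converse. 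For the last clause, when $d=0$ the distinguished bundle $\E_{r,0}$ is the one built inductively by the unique non-split self-extension $0\to\E_{r-1,0}\to\E_{r,0}\to\O_C\to0$ with $\E_{1,0}=\O_C$; one then checks $h^0(C,\E_{r,0})=1$ and that any rank-$r$, degree-zero indecomposable bundle with a nonzero global section must be isomorphic to it.

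For (2), with $\pi:C'\to C$ an isogeny of degree $r$ and $\L$ a line bundle of degree $d$ on $C'$, the morphism $\pi$ is finite flat, so $\pi_*\L$ is locally free of rank $\deg\pi=r$, while $\chi(C,\pi_*\L)=\chi(C',\L)=d$ gives $\deg(\pi_*\L)=d$ by Riemann--Roch on $C$. The hypothesis $\gcd(r,d)=1$ enters in proving indecomposability: a rank-$r$, degree-$d$ bundle with $(r,d)$ coprime is automatically stable once it is semistable, hence indecomposable, since a proper direct summand would by semistability have slope $d/r$ and therefore non-integral degree; and one verifies that $\pi_*\L$ is semistable.

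For (3), I would analyze the absolute Frobenius $F_C:C\to C$, a purely inseparable isogeny of degree $p$, and its pullback on a bundle $\E\in\E_C(rh,dh)$ with $\gcd(r,d)=1$. When $C$ is ordinary, the $p$-divisible group structure of $C$ makes $F_C^*$ compatible with Atiyah's iterated-extension construction, so $F_C^*\E$ remains indecomposable. When $C$ is supersingular the local structure of $F_C$ is more rigid, and $F_C^*$ can decompose $\E$ precisely when $h\neq1$ and $p\mid r$.

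The supersingular half of (3) is the only genuinely delicate point — it is exactly where the interplay between the inseparability of $F_C$, the rank $rh$, and the divisibility $p\mid r$ must be tracked carefully — and rather than reprove it I would invoke Oda's computation \cite[Theorem 2.16]{Oda71} directly.
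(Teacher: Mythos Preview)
The paper gives no proof of this theorem at all: it simply states the three facts with citations to Atiyah and Oda and moves on. Your proposal to invoke those references directly is therefore exactly the paper's approach, and your accompanying sketches of the mechanisms (the $\Pic^0(C)$-orbit structure, the rank/degree computation for $\pi_*\L$ via Riemann--Roch and stability, and the ordinary/supersingular dichotomy for $F_C^*$) go beyond what the paper itself provides.
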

\begin{lem}\label{lem:inv}
Let $\mathcal F$ be a vector bundle on $C$ of rank $r$ 
and $\mathcal L$ a line bundle such that $\mathcal F\otimes\mathcal L\cong\mathcal F$.
Then $\mathcal L^r\cong\O_C$.
\end{lem}
\begin{proof}
This follows from $(\det\mathcal F)\otimes\mathcal L^r\cong \det(\mathcal F\otimes\mathcal L)\cong\det\mathcal F$.
\end{proof}
\begin{lem}\label{lem:tors}
Let $\pi:C'\to C$ be a finite morphism of degree $d$ from an elliptic curve $C'$. 
Let $\mathcal L$ be a line bundle on $C$ such that $\pi^*\mathcal L\cong \O_{C'}$. 
Then $\mathcal L^d\cong\O_C$.
\end{lem}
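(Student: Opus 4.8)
The plan is to use the standard fact that on an elliptic curve $C'$, a line bundle $\M$ is $p^n$-torsion precisely when it lies in the kernel of the $n$-th iterated Frobenius-type considerations, but more directly to exploit the pushforward $\pi_*$ and the norm map. First I would recall that since $\pi:C'\to C$ is a finite morphism of degree $d$ between smooth projective curves, there is a norm homomorphism $\mathrm{Nm}_\pi:\Pic(C')\to\Pic(C)$ satisfying $\mathrm{Nm}_\pi\circ\pi^*=(\underline{\quad})^{\otimes d}$ on $\Pic(C)$. Applying this to $\L$ and using $\pi^*\L\cong\O_{C'}$ gives $\L^d\cong\mathrm{Nm}_\pi(\pi^*\L)\cong\mathrm{Nm}_\pi(\O_{C'})\cong\O_C$, which is exactly the claim. (I should double-check the harmless typo in the statement: the conclusion should read $\L^d\cong\O_C$, not $\O_{C'}$.)

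Alternatively, if one prefers to avoid invoking the norm map as a black box, I would argue via degrees and the projection formula. Since $\pi$ is finite of degree $d$ between elliptic curves, both $C$ and $C'$ are abelian varieties and $\pi$ is (up to translation) an isogeny, so $\pi^*:\Pic^0(C)\to\Pic^0(C')$ is surjective with kernel a finite group scheme of order $d$. A line bundle $\L$ with $\pi^*\L\cong\O_{C'}$ has degree zero (since $0=\deg\pi^*\L=d\cdot\deg\L$), hence lies in $\Pic^0(C)=\ker\pi^*$, a group scheme annihilated by its order $d$; thus $\L^d\cong\O_C$. This uses only the theory of isogenies of elliptic curves and the fact that a finite commutative group scheme is killed by its order.

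The main obstacle, such as it is, is minor: one must make sure the finite morphism $\pi$ is actually (a translate of) a homomorphism of abelian varieties so that $\ker\pi^*\subseteq\Pic^0(C)$ is a group scheme of order equal to $\deg\pi$ — this follows because any morphism of abelian varieties sending $0$ to $0$ is a homomorphism, and translating does not change $\pi^*$ on $\Pic^0$. Once that is in place, the rest is the standard fact that a finite group scheme of order $d$ is killed by multiplication by $d$ (Deligne's theorem, or in the commutative case the classical Cartier-style argument), applied to $\Pic^0(C)[\pi]$. I would present the norm-map proof as the primary argument since it is the shortest and works verbatim for curves, relegating the isogeny argument to a remark if needed.
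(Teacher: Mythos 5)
Your proposal is correct, but it proves the lemma by a genuinely different route than the paper. The paper also starts from the projection formula, setting $\F:=\pi_*\O_{C'}$ and observing $\F\otimes\L\cong\F$, but then it runs a Krull--Schmidt counting argument: using Atiyah's classification (Theorem \ref{thm:facts_ell} (1)) it analyzes how tensoring by powers of $\L$ permutes the indecomposable summands of $\F$, producing a direct summand of rank $rt$ with $n\mid rt$ for $n$ the order of $\L$, and concludes $n\mid\rank\F=d$. Your norm-map argument reaches the same conclusion in one step and is strictly more elementary and more general: it needs only that $\pi$ is finite flat of degree $d$ (no elliptic-curve input at all), and in fact it can be phrased as a one-line continuation of the paper's own first step --- from $\F\otimes\L\cong\F$ with $\F$ locally free of rank $d$, take determinants to get $\det\F\otimes\L^{d}\cong\det\F$, i.e.\ $\L^{d}\cong\O_C$. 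The paper's counting method has the virtue of staying inside the Atiyah toolbox that the rest of Section \ref{section:surf} uses (the same style of argument reappears in Lemma \ref{lem:trivialize}), whereas yours buys brevity and generality. Your alternative isogeny argument is also fine: degree considerations give $\L\in\Pic^0(C)$, and after translating $\pi$ into a homomorphism (which, as you note, does not change $\pi^*$ on $\Pic^0$ since degree-zero bundles are translation invariant), $\L$ lies in the kernel of the dual isogeny, a finite group scheme of order $d$, hence is killed by $d$; one small caution is the phrase ``lies in $\Pic^0(C)=\ker\pi^*$'', which as written asserts a false equality --- what you mean, and later correctly write as $\Pic^0(C)[\pi]$, is that $\L$ lies in the (finite) kernel of $\pi^*$ restricted to $\Pic^0(C)$. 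You are also right that the conclusion in the statement should read $\L^d\cong\O_C$; the paper's own proof has the analogous slip, ending with $\O_A$.
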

\begin{proof}
The projection formula shows $(\pi_*\O_{C'})\otimes\mathcal L\cong\pi_*\O_{C'}$, 
so the claim follows from Lemma~\ref{lem:inv}.
\end{proof}
In characteristic zero, the pullback of $\mathcal E_{r,0}$ 
to another elliptic curve via a finite morphism is again indecomposable.
However, the lemma below shows that, in positive characteristic, 
the pullback of $\mathcal E_{r,0}$ can be trivial. 
\begin{lem}\label{lem:trivialize} 
There exists a finite morphism $\pi:C'\to C$ from an elliptic curve $C'$ such that the following conditions hold:
\begin{itemize}
\item[$(1)$] $\pi^*\mathcal E_{2,0}\cong \O_{C'}\oplus\O_{C'}$;
\item[$(2)$] $\pi_*\O_{C'}\cong \mathcal E_{p,0}$; 
\item[$(3)$] $\pi^*:\mathrm{Pic}^0(C)\to\mathrm{Pic}^0(C')$ is injective. 
\end{itemize}
\end{lem}
\begin{proof}
Since $\mathcal E_{2,0}$ is obtained as a nontrivial extension $\xi$ of $\O_C$ by $\O_C$, 
it is enough to find a finite morphism $\pi:C'\to C$ from 
an elliptic curve such that $\pi^*$ kills $H^1(C,\O_C)$. 
If $C$ is ordinary, i.e., ${F_C}^*:H^1(C,\O_C)\to H^1(C,\O_C)$ is an isomorphism,
then we may assume that ${F_C}^*\xi_1=\xi_1$.
In this case, $\xi_1$ defines an \'etale cover $\pi:C'\to C$ of degree $p$ such that $\pi^*\xi_1=0$.
If $C$ is supersingular, or equivalently ${F_C}^*\xi_1=0$, then we set $\pi:=F_C$.
Next, we prove (2). 
Let $\mathcal F$ be an indecomposable direct summand of $\pi_*\O_{C'}$. 
Theorem~\ref{thm:facts_ell} says that $\mathcal F\cong\mathcal E_{r,0}\otimes\mathcal L$ for an $\mathcal L\in\mathrm{Pic}^0(C)$, 
where $r:=\mathrm{rank}\,\mathcal F$. 
Then, by the projection formula, we have 
$$
\mathcal E_{r,0}\cong\mathcal F\otimes\mathcal L^{-1}\subseteq (\pi_*\O_{C'})\otimes\mathcal L^{-1}\cong\pi_*\pi^*\mathcal L^{-1}. 
$$
This means that $\pi^* \mathcal L^{-1}$ is a numerically trivial line bundle having non-zero global sections, 
so it is trivial, and hence Lemma~\ref{lem:tors} implies that $\mathcal L$ is $p$-torsion. 
If $C$ is ordinary, then $\pi$ is \'etale, so the commutative diagram 
$$
\xymatrix{C' \ar[r]^{F_{C'}} \ar[d]_{\pi} & C' \ar[d]^{\pi} & \\ 
C \ar[r]^{F_C} & C & }
$$
is cartesian, which implies $F_{C}^*\pi_*\O_{C'}\cong\pi_*\O_{C'}$. 
From this, we find $\pi_*\O_{C'}\cong\mathcal E_{p,0}$. 
If $C$ is supersingular, then $C$ has no nontrivial $p$-torsion line bundle, so $\mathcal L\cong\O_C$.
This means that $\pi_*\O_{C'}\cong\mathcal E_{p,0}$. 
Finally, we show (3). Take $\mathcal L\in\mathrm{Pic}(C)$ with $\pi^*\mathcal L\cong\O_{C'}$. 
Combining (2) with the projection formula, 
we get $\mathcal E_{p,0}\otimes\mathcal L\cong\mathcal E_{p,0}$, 
so Theorem~\ref{thm:facts_ell}~(1) says that $\mathcal L \cong \O_C$, 
which is our assertion. 
\end{proof}
\begin{lem}\label{lem:sym}
The $m$-th symmetric product $S^m\mathcal E_{2,0}$ of $\mathcal E_{2,0}$ is isomorphic to a direct sum of vector bundles of the form $\mathcal E_{r,0}$.
\end{lem}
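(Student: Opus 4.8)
The plan is to induct on $m$ using the standard extension presentation of $\E_{2,0}$. Recall that $\E_{2,0}$ sits in a non-split exact sequence $0\to\O_C\to\E_{2,0}\to\O_C\to0$. Taking symmetric powers of this sequence, one gets a filtration of $S^m\E_{2,0}$ whose successive quotients are $\O_C$ (appearing $m+1$ times), but that is too coarse; instead I would use the short exact sequence obtained by multiplying the subsheaf $\O_C\hookrightarrow\E_{2,0}$ into $S^{m-1}\E_{2,0}$, namely
\begin{align*}
0\to S^{m-1}\E_{2,0}\to S^m\E_{2,0}\to S^m\O_C\cong\O_C\to0,
\end{align*}
together with its "dual" counterpart $0\to\O_C\to S^m\E_{2,0}\to S^{m-1}\E_{2,0}\to0$ coming from the quotient map $\E_{2,0}\twoheadrightarrow\O_C$. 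By induction $S^{m-1}\E_{2,0}$ is a direct sum of bundles $\E_{r,0}$, so $S^m\E_{2,0}$ is an extension of $\O_C$ by such a sum (and vice versa); the task is to show such an extension again decomposes into pieces of the form $\E_{r,0}$.

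For that I would argue summand by summand. Each $\E_{r,0}$ is the unique indecomposable degree-zero bundle of rank $r$ with a section, and by Atiyah's theory (Theorem \ref{thm:facts_ell} (1) and the classification underlying it) it is a self-extension chain of $\O_C$'s; in fact $\E_{r,0}$ is characterized as the unique indecomposable extension of $\E_{r-1,0}$ by $\O_C$ (equivalently of $\O_C$ by $\E_{r-1,0}$). So the key linear-algebra/cohomology computation is: given a direct sum $\bigoplus_i\E_{r_i,0}$, the extension group $\mathrm{Ext}^1\!\bigl(\O_C,\bigoplus_i\E_{r_i,0}\bigr)\cong\bigoplus_i H^1(C,\E_{r_i,0})$ is one-dimensional in each factor (since $H^1(C,\E_{r,0})\cong H^0(C,\E_{r,0})^\vee\cong k$ by Serre duality and $\o_C\cong\O_C$), and a class that is nonzero in the $i$-th factor produces $\E_{r_i+1,0}$ in place of $\E_{r_i,0}$ while leaving the other summands untouched, whereas a zero class splits off an $\O_C=\E_{1,0}$. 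Thus any such extension is again of the required form. The same analysis applies to extensions of $\bigoplus_i\E_{r_i,0}$ by $\O_C$. Feeding this back into the inductive step completes the proof.

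The main obstacle I anticipate is bookkeeping in that extension step: one must be careful that the extension class of $S^m\E_{2,0}$ is genuinely "diagonal" — i.e. that it does not mix several summands $\E_{r_i,0}$ in a way that could create a new indecomposable bundle not of the form $\E_{r,0}$. This is where the structure of $\E_{2,0}$ as a specific (symmetric-power) construction matters, rather than a generic extension; concretely, one identifies the relevant class via the cup product with the fixed generator $\xi\in H^1(C,\O_C)$ defining $\E_{2,0}$, and checks it lands in a single Atiyah summand. An alternative, perhaps cleaner, route is to base change to a suitable isogeny or Frobenius cover $\pi:C'\to C$ that trivializes $\E_{2,0}$ (as in Lemma \ref{lem:trivialize}): then $\pi^*S^m\E_{2,0}$ is trivial, so each indecomposable summand of $S^m\E_{2,0}$ pulls back to a trivial bundle, hence is degree zero with a section after twisting, hence is some $\E_{r,0}$ by Theorem \ref{thm:facts_ell} (1). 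I would try the pullback argument first, falling back on the explicit induction if descent of the decomposition is awkward.
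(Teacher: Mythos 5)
Your preferred route (pulling back along a trivializing cover) is in essence the paper's strategy, but it stops one step short of the real difficulty, and that step is a genuine gap. Pulling back along $\pi\colon C'\to C$ from Lemma \ref{lem:trivialize} does show that an indecomposable summand $\F$ of $S^m\E_{2,0}$ has degree zero and trivial pullback, and Atiyah's classification (Theorem \ref{thm:facts_ell} (1)) then gives only $\F\cong\E_{r,0}\otimes\L$ for some $\L\in\Pic^0(C)$: ``degree zero with a section after twisting'' characterizes $\E_{r,0}$ only up to a twist, so the inference ``hence is some $\E_{r,0}$'' does not follow. Ruling out a nontrivial twist $\L$ is exactly the content of the lemma, and your sketch never addresses it. The paper supplies the missing argument: triviality of $\pi^*\F$ forces $\pi^*\L\cong\O_{C'}$, hence $\L^p\cong\O_C$ by Lemma \ref{lem:tors} (as $\deg\pi=p$); if $C$ is supersingular there are no nontrivial $p$-torsion line bundles, while if $C$ is ordinary one uses $F_C^*\E_{2,0}\cong\E_{2,0}$, so $S^m\E_{2,0}\cong F_C^*S^m\E_{2,0}$, together with $F_C^*(\E_{r,0}\otimes\L)\cong\E_{r,0}\otimes\L^p\cong\E_{r,0}$, and concludes $\L\cong\O_C$ by Krull--Schmidt. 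Some argument of this kind (or a direct proof that no nontrivial line bundle is killed by $\pi^*$) is indispensable.

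Your primary route, the induction via $0\to S^{m-1}\E_{2,0}\to S^m\E_{2,0}\to\O_C\to0$, is likewise not carried out at its crucial point, which you yourself flag: an extension of $\O_C$ by $\bigoplus_i\E_{r_i,0}$ whose class has several nonzero components need not split off all but one summand; settling this requires analyzing how $\mathrm{Aut}\bigl(\bigoplus_i\E_{r_i,0}\bigr)$, including the nonzero maps $\E_{r_i,0}\to\E_{r_j,0}$, acts on $\bigoplus_i H^1(C,\E_{r_i,0})$, and identifying the actual class of the symmetric power. That is the heart of the matter, not bookkeeping, and in characteristic $p$ the outcome genuinely differs from characteristic zero, where $S^m\E_{2,0}$ stays indecomposable, whereas here for instance $S^p\E_{2,0}\cong\E_{p,0}\oplus\O_C$ (cf. Claim \ref{cl:ds}). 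Note also that the ``dual'' sequence $0\to\O_C\to S^m\E_{2,0}\to S^{m-1}\E_{2,0}\to0$ is not available for free in characteristic $p$: dualizing produces the divided power $\Gamma^m$ rather than $S^m$, and the natural contraction $S^m\E_{2,0}\to S^{m-1}\E_{2,0}\otimes\O_C$ vanishes on $p$-th powers. So as written neither route closes the proof; the pullback route becomes the paper's proof precisely after adding the $p$-torsion and Frobenius/Krull--Schmidt step above.
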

\begin{proof}
Let $\mathcal F$ be an indecomposable direct summand of $S^m\mathcal E_{2,0}$. 
Theorem~\ref{thm:facts_ell} says that $\mathcal F\cong\mathcal E_{r,0}\otimes\mathcal L$ for an $\mathcal L\in\mathrm{Pic}^0(C)$, 
where $r:=\mathrm{rank}\,\mathcal F$. 
Let $\pi:C'\to C$ be as in Lemma~\ref{lem:trivialize}. 
Since $\pi^*S^m\mathcal E_{2,0}$ is trivial, we have $\pi^*\mathcal L\cong\O_{C'}$, 
so Lemma~\ref{lem:trivialize}~(3) shows that $\mathcal L \cong \O_C$, 
which completes the proof. 
\end{proof}
We now return to the study of the Albanese morphism $a:X\to A$ of $X$. 
We may regard $a:X\to A$ as the natural projection $\mathbb P(\mathcal E)\to A$ 
of the projective bundle $\mathbb P(\mathcal E)$ for a vector bundle $\mathcal E$ on $A$ of rank two. 
If $\mathcal E$ is decomposable, then $a$ is $F$-split as shown in Example~\ref{eg:pn-bdl}. 
Hence, we assume that $\mathcal E$ is indecomposable in the remaining of this section. 
We only need to consider the case when $\deg\mathcal E=0$ or $1$. 
\subsubsection{The case when $\mathcal E$ is an indecomposable vector bundle of degree 0.} \label{subsubsection:0}
In this case, we may assume that $\mathcal E=\mathcal E_{2,0}$ by Theorem~\ref{thm:facts_ell}~(1). 
Lemma~\ref{lem:trivialize} then says that we have a finite morphism $\pi:A'\to A$ 
from an elliptic curve $A'$ such that $\pi^*\mathcal E_{2,0}\cong\O_{A'}^{\oplus 2}$. 
In particular, $X_{A'}\cong \mathbb P(\pi^*\mathcal E_{2,0})\cong \mathbb P^1\times A^1$. 
We show the following:
\begin{prop}\label{prop:deg0}
The morphism $a:X\to A$ is $F$-split if and only if $A$ is ordinary and $p=2$.
\end{prop}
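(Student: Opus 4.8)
The plan is to analyze the relative Frobenius splitting of $a:X=\mathbb P(\E_{2,0})\to A$ by working over the trivializing cover $\pi:A'\to A$ from Lemma \ref{lem:trivialize}, since there $X_{A'}\cong\mathbb P^1\times A'$ and everything becomes computable. First I would use the factorization philosophy of Proposition \ref{prop:fgh}: the projection $q:\mathbb P^1\times A'\to A'$ is $F$-split (it is a trivial $\mathbb P^1$-bundle, hence $F$-split by Example \ref{eg:pn-bdl}), so the $F$-splitting of $X_{A'}\to A'$ reduces to the $F$-splitting of $A'$, i.e.\ to $A'$ being ordinary; and since $\pi:A'\to A$ is an isogeny, $A'$ is ordinary iff $A$ is ordinary. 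But $F$-splitting of $a$ is a finer condition than $F$-splitting of $a$ after base change, so the real content is to track how the splitting descends/ascends along $\pi$ and to locate where $p=2$ enters.

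The key steps, in order: (1) Reduce to understanding $\phi^{(e)}_{X/A}:{F^{(e)}_{X/A}}_*\o_{X^e/A^e}^{1-p^e}\to\O_{X_{A^e}}$, and recall from Section \ref{section:relFsp} that $a$ is $F$-split iff this splits for some $e$, equivalently iff $H^0(X_{A^e},\phi^{(e)}_{X/A})$ is surjective. (2) Since $a:X\to A$ is a $\mathbb P^1$-bundle, compute ${a^{(e)}}_*\O_{X^e}$ and ${a^{(e)}}_*\o_{X^e/A^e}^{1-p^e}$ as symmetric-power sheaves on $A$: the first is $\bigoplus_{m\ge 0}S^{m}\E_{2,0}$-type terms coming from $F_A^e$-pullbacks, the second dualizes to the same data twisted by $\det$. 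Here Lemma \ref{lem:sym} is crucial: every $S^m\E_{2,0}$ is a direct sum of bundles $\E_{r,0}$, so the global sections and the action of $p^e$-torsion line bundles are controlled. (3) Push forward $\phi^{(e)}_{X/A}$ by $a_{A^e}$ and identify it, via Lemma \ref{lem:sym} and Theorem \ref{thm:facts_ell}(3), with a concrete map of vector bundles on $A$; surjectivity on $H^0$ is then a statement about whether $\O_A$ appears as a summand of the relevant Frobenius pushforward, which by Theorem \ref{thm:facts_ell}(3) depends on how $F_A^e$ interacts with the indecomposable summands $\E_{r,0}$. (4) Carry out the case split: if $A$ is not ordinary (supersingular), show no splitting is possible — the Frobenius kills the extension class too fast, and the relevant Frobenius-pushforward of $\o^{1-p^e}$ never acquires $\O_A$ as a summand; if $A$ is ordinary and $p\ge 3$, show that the obstruction survives (the odd symmetric powers, and in particular the behavior of $S^{p^e-1}\E_{2,0}$, prevent the trace from being surjective); and if $A$ is ordinary and $p=2$, exhibit an explicit splitting using that over $A'$ the bundle $\pi_*\O_{A'}\cong\E_{p,0}=\E_{2,0}$ itself (Lemma \ref{lem:trivialize}) matches the $\mathbb P^1$-factor structure, so the splitting of $q:\mathbb P^1\times A'\to A'$ descends.

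I expect the main obstacle to be step (3)--(4): precisely tracking the trace map $\phi^{(e)}_{X/A}$ on global sections through the symmetric-power decomposition and determining, as a function of $p$ and of the ordinary/supersingular dichotomy, exactly when $\O_A$ splits off ${a^{(e)}}_*\o_{X^e/A^e}^{1-p^e}$ compatibly with the map to $\O_A$. The subtlety is that in characteristic $2$ the relative dualizing sheaf of $\mathbb P^1$ and the rank-two bundle $\E_{2,0}$ conspire (the extension $0\to\O_A\to\E_{2,0}\to\O_A\to 0$ has $F_A^*\xi=0$ in a way that is "compatible with degree $p=2$"), which is exactly why the splitting appears only for $p=2$; making this conspiracy quantitative — rather than just plausible — is the delicate point. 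The descent direction (ordinary $A$, $p=2$ $\Rightarrow$ $a$ is $F$-split) I would handle by directly writing the splitting over $A'$ and averaging/descending along the isogeny $\pi$, using that $\deg\pi=p=2$ is prime to nothing problematic; the ascent/non-splitting direction I would handle by the summand-counting arguments in the style of Observation \ref{obs:p-tors} and Lemma \ref{lem:inv_lb} already used repeatedly in the paper.
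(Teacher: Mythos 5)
The central gap is in your ``if'' direction. You propose to write down the splitting of $X_{A'}\cong\mathbb P^1\times A'\to A'$ and then descend it along $\pi:A'\to A$ by an averaging/trace argument, ``using that $\deg\pi=p=2$ is prime to nothing problematic.'' But $\deg\pi=p$ (Lemma \ref{lem:trivialize} gives an \'etale cover of degree $p$ when $A$ is ordinary), so the usual descent device $\tfrac{1}{\deg\pi}\Tr_\pi$ is unavailable: the composite $\Tr_\pi\circ\pi^{\#}$ is multiplication by $p=0$. Worse, if such a descent worked it would work verbatim for every $p$: $\mathbb P^1\times A'\to A'$ is $F$-split for all $p$ by Example \ref{eg:pn-bdl}, while the very proposition you are proving says $a$ is \emph{not} $F$-split when $\E=\E_{2,0}$ and $p\ge3$ even with $A$ ordinary. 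So this step is not just incomplete; it cannot be repaired without injecting the special role of $p=2$ somewhere else. The paper does this by producing the second fibration $g:X\to Y\cong\mathbb P^1$ with $g^*\O_Y(1)\cong\O_X(p)$ (Claim \ref{cl:Iitaka}), taking a general fiber $B$ (an elliptic curve), observing that $\o_X\otimes\O_X(B)\cong\O_X(-2)\otimes\O_X(p)\cong\O_X$ precisely because $p=2$, and then invoking Theorem \ref{thm:cbf} (ii)-(3) to reduce the $F$-splitting of $(a,B)$ to the $F$-splitting of the fibers $(X_z,B|_{X_z})$; ordinarity of $A$ is used to make $\pi$ \'etale, so that $B_{A'}$ is a union of sections and $B|_{X_z}$ is a sum of two distinct points on $\mathbb P^1$. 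Finally $(a,B)$ $F$-split implies $a$ $F$-split by Remark \ref{rem:relFsp} (3). Nothing in your outline supplies a substitute for this boundary-divisor mechanism. (Also note your aside that ``$F_A^*\xi=0$'' for the extension class of $\E_{2,0}$ is the supersingular behavior, not the ordinary one; in the ordinary case one arranges $F_A^*\xi=\xi$.)

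The ``only if'' half is likewise missing its decisive computations, though your framing (trace map, symmetric powers of $\E_{2,0}$, Lemma \ref{lem:sym}) matches the paper's. To rule out $p\ge3$ once $A$ is known ordinary, the paper uses: $a$ and $A$ $F$-split $\Rightarrow$ $X$ $F$-split, then twists the split map $\O_{X_{A^1}}\to{F_{X/A}^{(1)}}_*\O_{X^1}$ by $\O_{X_{A^1}}(1)$ and pushes forward along $a_{A^1}$ to get a split injection $\E_{2,0}\cong F_A^*\E_{2,0}\hookrightarrow S^p\E_{2,0}$; the key identity $S^p\E_{2,0}\cong\E_{p,0}\oplus\O_A$ (Claim \ref{cl:ds}) then forces $\E_{2,0}\cong\E_{p,0}$, i.e.\ $p=2$. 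Your appeal to ``odd symmetric powers'' and ``whether $\O_A$ splits off'' gestures in this direction but does not identify either the decomposition of $S^p\E_{2,0}$ or the twist-by-$\O(1)$ device, and without them the obstruction is not pinned down. For the supersingular exclusion, the paper's argument is not a summand count in the style of Observation \ref{obs:p-tors}: it shows $H^0(X_{A^e},\phi^{(e)}_{X/A})=0$ because, with $\pi=F_A$, the divisor $B_{A'}$ equals $pS$ for a section $S$, so the factors coming from $g^*H^0(\O_Y(2p^{e-1}-1))$ are $p$-th powers and are annihilated by the trace. You would need to supply an argument of comparable precision; ``never acquires $\O_A$ as a summand'' is not by itself the right criterion, since $F$-splitting of $a$ is the surjectivity of the trace on global sections, not merely the presence of a trivial summand.
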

To prove Proposition~\ref{prop:deg0}, we prepare the claims below.
\begin{cl}\label{cl:Iitaka}
There exists an algebraic fiber space $g:X\to Y\cong\mathbb P^1$ such that $g^*\O_Y(1)\cong\O_X(p)$.
\end{cl}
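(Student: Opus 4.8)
\textbf{Proof plan for Claim \ref{cl:Iitaka}.}

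The plan is to produce the fibration $g\colon X\to Y$ as the Iitaka fibration associated to a suitable divisor on $X$, using the structure $X\cong\mathbb P(\E_{2,0})$ together with the trivialization $\pi^*\E_{2,0}\cong\O_{A'}^{\oplus 2}$ coming from Lemma \ref{lem:trivialize}. First I would recall that for $X=\mathbb P(\E_{2,0})$ over the elliptic curve $A$, the Picard group is generated by the tautological class $\xi$ (the class of $\O_X(1)$) and the pullbacks $a^*\Pic(A)$, and that $K_X\sim_{\Q} -2\xi + a^*(\det\E_{2,0}) \sim_{\Q} -2\xi$ since $\deg\E_{2,0}=0$ (in fact $\det\E_{2,0}\cong\O_A$). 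The key point is that $\E_{2,0}$, being the unique indecomposable rank-two bundle of degree $0$ with a section, is nef but not ample, and that $h^0(A,S^m\E_{2,0})$ grows; by Lemma \ref{lem:sym}, $S^m\E_{2,0}$ decomposes as a sum of bundles $\E_{r,0}$, each of which has a one-dimensional space of global sections. Hence $h^0(X,\O_X(m))=h^0(A,S^m\E_{2,0})$ is bounded (it equals the number of indecomposable summands in each degree), so $\kappa(X,\xi)=1$ and the Iitaka fibration $g\colon X\dashrightarrow Y$ of $\xi$ maps to a curve.

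Next I would identify $Y$ with $\mathbb P^1$ and make $g$ a genuine morphism by base change along $\pi\colon A'\to A$. After pulling back, $X_{A'}\cong\mathbb P^1\times A'$, and the tautological class $\xi$ pulls back to $\mathrm{pr}_1^*\O_{\mathbb P^1}(1)$; thus on $X_{A'}$ the linear system $|\xi|$ already defines the first projection $\mathrm{pr}_1\colon\mathbb P^1\times A'\to\mathbb P^1$, which is an algebraic fiber space onto $\mathbb P^1$. The finite morphism $X_{A'}\to X$ is a quotient by a finite group (either the étale Galois group of $\pi$ of order $p$, or, in the supersingular case, a purely inseparable $\pi=F_A$), and this group acts on $\mathbb P^1$ compatibly; the quotient $\mathbb P^1/G$ is again $\mathbb P^1$. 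Therefore the projection descends to an algebraic fiber space $g\colon X\to Y$ with $Y\cong\mathbb P^1$. The remaining point is the precise relation $g^*\O_Y(1)\cong\O_X(p)$: after pulling back to $X_{A'}$ we have $g_{A'}^*\O_{\mathbb P^1}(1)$ versus $\xi_{A'}$, and the factor of $p$ enters because the map $\mathbb P^1\to\mathbb P^1/G$ has degree $p$, so the ample generator downstairs pulls back to $\O(p)$ upstairs on the $\mathbb P^1$-factor; equivalently, among the summands $\E_{r,0}$ of $S^m\E_{2,0}$ a new section appears precisely when $p\mid$ (an index related to $m$), forcing $g^*\O_Y(1)$ to be $\O_X(p)$ rather than $\O_X(1)$.

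I expect the main obstacle to be the bookkeeping in this last step: pinning down exactly which power of $\xi$ is the pullback of the ample generator on $Y$, i.e.\ showing the multiplicity is exactly $p$ and not some other divisor of it. The cleanest route is probably to compute the ring $\bigoplus_m H^0(X,\O_X(m))=\bigoplus_m H^0(A,S^m\E_{2,0})$ directly: using Lemma \ref{lem:sym} and Theorem \ref{thm:facts_ell}(1), $H^0(A,S^m\E_{2,0})$ is nonzero exactly for those $m$ in a sub-semigroup of $\mathbb N$, and one checks that $\O_C$ itself (the summand with a section) occurs in $S^m\E_{2,0}$ precisely when $p\mid$ the relevant parameter, so the section ring is generated in degree $p$; this identifies the Iitaka fibration as $\mathrm{Proj}$ of a polynomial ring in one variable of degree $p$, giving both $Y\cong\mathbb P^1$ and $g^*\O_Y(1)\cong\O_X(p)$ simultaneously. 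Alternatively, working over the ordinary case first (where $\pi$ is étale Galois of degree $p$) and using the supersingular case $\pi=F_A$ separately makes the quotient-of-$\mathbb P^1$ argument entirely explicit.
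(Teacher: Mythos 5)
The first half of your plan is essentially the paper's: trivialize $\E_{2,0}$ by $\pi:A'\to A$ (Lemma \ref{lem:trivialize}), so $X_{A'}\cong\mathbb P^1\times A'$, and conclude $\kappa(X,\O_X(1))=1$. But your stated justification for $\kappa=1$ is incoherent: $h^0(X,\O_X(m))=h^0(A,S^m\E_{2,0})$ is \emph{not} bounded (boundedness would force $\kappa\le 0$; in fact the number of summands grows linearly, but proving that is already close to the point at issue). The clean argument, and the one the paper uses, is invariance of Iitaka dimension under pullback by the surjective morphism $X_{A'}\to X$, together with $\nu(X,\O_X(1))=1$ to get semi-ampleness so that the Iitaka fibration is an actual morphism onto $Y\cong\mathbb P^1$.

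The genuine gap is the heart of the claim: writing the general fiber of $g$ as $B$ with $\O_X(B)\cong\O_X(m)\otimes a^*\L$, one must prove $m=p$ and $\L\cong\O_A$, and neither of your two mechanisms establishes this. For the quotient argument: in the ordinary case the Galois group $G\cong\Z/p\Z$ of $\pi$ does act on the $\mathbb P^1$-factor through a constant map $A'\to\mathrm{PGL}_2$, but you still must show this action is nontrivial --- if it were trivial, $\mathrm{pr}_1$ would descend to a morphism $X\to\mathbb P^1$ and the multiple would be $1$, not $p$ --- and no such faithfulness argument is given (one needs, e.g., that $\mathbb P(\E_{2,0})$ is a nontrivial $\mathbb P^1$-bundle); moreover, even granting degree $p$, the kernel of $\pi_X^*$ on $\Pic$ allows a $p$-torsion twist, which must be excluded (the paper does this via the pushforward sequence and Lemma \ref{lem:sym}). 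In the supersingular case $\pi=F_A$ is purely inseparable, so there is no group $G$ at all and ``the quotient $\mathbb P^1/G$'' does not parse; the descent and degree count would have to be redone with relative Frobenius, which is a different argument you do not supply. Your ``cleanest route'' --- that $\O_A$ occurs as a summand of $S^m\E_{2,0}$ exactly when $p\mid m$ --- is precisely the content of Claim \ref{cl:ds}, which in the paper is \emph{deduced} from the proof of Claim \ref{cl:Iitaka} (via $a_*\O_B\cong\E_{m,0}$, $\pi^*a_*\O_B\cong\O_{A'}^{\oplus m}$, and $(a|_B)^*\E_{p,0}\cong\O_B^{\oplus p}$ forcing $m=p$); invoking it as ``one checks'' leaves the key step unproved and, relative to the paper's logic, circular. (Minor further slips: $h^0(S^m\E_{2,0})\ge1$ for every $m$, so the ``sub-semigroup'' remark is vacuous, and the relevant Veronese of the section ring is a polynomial ring in two variables, not one.)
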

\begin{cl}\label{cl:ds}
The $p$-th symmetric power $S^p\mathcal E_{2,0}$ of $\mathcal E_{2,0}$ is isomorphic to $\mathcal E_{p,0}\oplus\O_A.$ 
\end{cl}
\begin{proof}[Proof of Claims~\ref{cl:Iitaka} and~\ref{cl:ds}]
Since $\pi^*$ preserves the Iitaka-Kodaira dimension (cf.~\cite[Theorem 10.5]{Iit82}), 
we have 
$$
1\le \kappa(X,\O_{X}(1))
=\kappa(X_{A'},\O_{X_{A'}}(1))
=\kappa(\mathbb P^1,\O_{\mathbb P^1}(1))
=1, 
$$ 
so $\O_X(1)$ is semi-ample.
Let $g:X\to Y$ be the Iitaka fibration associated to $\O_X(1)$. 
Then, $Y\cong \mathbb P^1$ and a general fiber $B$ of $g$ is an elliptic curve. 
Now we have the following commutative diagram:
$$
\xymatrix{B_{A'}\ar[r] \ar[d] & B \ar[d] & \\ 
X_{A'} \ar[r]^{\pi_X} \ar[d]_{a_{A'}} & X \ar[d]^{a} \ar[r]^g & Y \\ 
A' \ar[r]_{\pi} & A. & }
$$
By the construction, we have $\O_X(B)\cong \O_X(m)\otimes a^*\mathcal L$ 
for an $m\ge 2$ and a torsion line bundle $\mathcal L$ on $A$.
Take $l\in\mathbb{Z}$. 
We consider the exact sequence 
\begin{align*}
0\to \O_X(l)\otimes\O_X(-B)\to \O_X(l)\to \O_B\to 0 
\tag{\ref{cl:ds}.1} \label{seq:surf_l}
\end{align*}
of $\O_X$-modules. 
Pushing forward via $a$, when $l=m-1$, we get $S^{m-1}\mathcal E_{2,0}\cong a_*\O_B$. 
This shows $H^0(A,S^{m-1}\mathcal E_{2,0})\cong k$, 
so Lemma~\ref{lem:sym} says $S^{m-1}\mathcal E_{2,0}\cong\mathcal E_{m,0}$, 
which means $\pi^*a_*\O_B\cong\O_{A'}^{\oplus m}$. 
Since $\pi:A'\to A$ and $a|_B:B\to A$ are flat, we obtain 
\begin{align*}
p\ge\dim H^0(B,(a|_B)^*\mathcal E_{p,0})
&=\dim H^0(B,(a|_B)^*\pi_*\O_{A'})\\
&=\dim H^0(B_{A'},\O_{B_{A'}})
=\dim H^0(A',\pi^*a_*\O_B)=m\ge2. 
\end{align*}
If $p>m$, then $(a|_B):\O_A\to (a|_B)_*\O_B$ splits, 
so $(a|_B)^*\mathcal E_{p,0}\cong\mathcal E_{p,0}$, which contradicts the above. 
Hence, $p=m$. 
Pushing forward (\ref{seq:surf_l}) via $a$ again, when $l=p$, we obtain the exact sequence 
\begin{align*}
0\to \mathcal L^{-1}\to S^p\mathcal E_{2,0}\to a_*\O_B\to0.  \tag{\ref{cl:ds}.2} \label{seq:surf_p}
\end{align*}
Combining this with Lemma~\ref{lem:sym}, 
we see that $\mathcal L\cong\O_A$, i.e., $g^*\O_Y(1) \cong \O_X(B) \cong \O_X(p)$, 
which is Claim~\ref{cl:Iitaka}. 
Also, since $H^0(A,S^p\mathcal E_{2,0})\cong H^0(X,\O_X(p))\cong H^0(Y,\O_Y(1))=k^{\oplus 2}$, 
we see that (\ref{seq:surf_p}) splits, which is Claim~\ref{cl:ds}. 
\end{proof}
We now start the proof of Proposition~\ref{prop:deg0}.
\begin{proof}[Proof of Proposition~\ref{prop:deg0}]
We use the same notation as the proof of Claims~\ref{cl:Iitaka} and~\ref{cl:ds}.  
We first prove that $(a,B)$ is $F$-split, assuming that $A$ is ordinary and $p=2$. 
We now have $\o_X\otimes\O_X(B)\cong\O_X(-2)\otimes\O_X(p)\cong\O_X$. 
Hence, thanks to Theorem~\ref{thm:cbf}~(ii)-(3), 
it is enough to show that $(X_z,B|_{X_z})$ is $F$-split for a fiber $X_z\cong\mathbb P^1$ of $a$.
Since $\pi^*a_*\O_B\cong\O_{A'}\otimes\O_{A'}$, 
we see that $B_{A'}$ is a disjoint union of two sections of $a_{A'}:X_{A'}\to A'$. 
This implies that the divisor $B|_{X_z}$ is a sum of two distinct (reduced) points.
Using the assumption that $p=2$, we find that $(X_z,B|_{X_z})$ is $F$-split. 

Next, we show that $A$ is ordinary and $p=2$, assuming $a$ is $F$-split. 
Fix an $e>0$ such that $\phi^{(e)}_{X/A}$ splits. 
By Claim~\ref{cl:Iitaka}, we have 
\begin{align*}
H^0(X,\o_X^{1-p^e})
&=H^0(X,\O_X(2p^e-2)) \\ 
&=H^0\left(X,\O_X(p-2)\otimes g^*\O_Y(2p^{e-1}-1)\right)=V\cdot g^*W, 
\end{align*}
where $V:=H^0(X,\O_X(p-2))$ and $W:=H^0(Y,\O_Y(2p^{e-1}-1))$. 
Therefore, we can choose $v\in V$ and $w\in W$ such that $v\cdot g^*w\in H^0(X,\o_X^{1-p^e})$ 
gives a splitting of $\phi^{(e)}_{X/A}$. 
The pullback $({\pi_X}^*v)\cdot({\pi_X}^*g^*w)$ then gives a splitting of $\phi^{(e)}_{X_{A'}/A'}$. 
Suppose that $A$ is supersingular. Then, by definition, $\pi=F_A$. 
One can easily check that $g\circ\pi_X$ factors through $F_Y:Y\to Y$, 
so there is some $u\in H^0(\O_{X_{A'}}(2p^{e-1}-1))$ with ${\pi_X}^*g^*w=u^p$.
This implies that the pullback of $v\cdot u^p$ to a fiber of $a_{A'}$ induces 
a splitting of $\mathbb P^1$, which is a contradiction. 
Hence, $A$ must be ordinary. 
We show $p=2$. 
It follows from the $F$-splitting of $a$ that 
$\O_{X_{A^1}}\to {F_{X/A}^{(1)}}_*\O_{X^1}$ splits. 
Applying the functor ${a_{A^1}}_*(\underline{\quad}\otimes\O_{X_{A^1}}(1))$ to this, we get 
$$
\mathcal E_{2,0}\cong{F_A}^*\mathcal E_{2,0}\cong {a_{A^1}}_*\O_{X_{A^1}}(1)
\xrightarrow{\textup{{\tiny split}}} {a^{(1)}}_*\O_{X^1}(p)
\cong S^p\mathcal E_{2,0}
\overset{\textup{{\tiny Claim~\ref{cl:ds}}}}{\cong}\mathcal E_{p,0}\oplus\O_A, 
$$ 
so $\mathcal E_{2,0}\cong \mathcal E_{p,0}$, and thus $p=2$, which completes the proof. 
\end{proof}
\subsubsection{The case when $\mathcal E$ is an indecomposable vector bundle of degree 1.} 
\begin{prop}\label{prop:deg1}
Suppose that $\deg\mathcal E=1$. Then, $a$ is $F$-split if and only if $A$ is ordinary or $p>2$.
\end{prop}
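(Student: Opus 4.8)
The plan is to base-change the projective bundle $X=\mathbb P(\E)$ along a suitable degree-two isogeny $\pi\colon A'\to A$, to recognise the resulting ruled surface, and then to transfer the conclusion back to $A$ by a computation with the pushed-forward relative trace map.

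First I would collect two preliminary facts. (a) Since $\gcd(2,1)=1$, Theorem \ref{thm:facts_ell}(1) gives that $\det\colon\E_A(2,1)\to\Pic^1(A)$ is a bijection; as the norm map $\mathrm{Nm}\colon\Pic^1(B)\to\Pic^1(A)$ is surjective for every degree-two isogeny $B\to A$ (it satisfies $\mathrm{Nm}\circ\pi^*=[2]$ on $\Pic^0$), it follows that $\E\cong\pi_*L$ for some degree-two isogeny $\pi\colon A'\to A$ of elliptic curves and some line bundle $L$ of degree one, and moreover $\pi$ may be taken to be étale when $p>2$ (all degree-two isogenies are then étale) and to be the Frobenius isogeny $A'\to(A')^{(p)}\cong A$ when $p=2$. (b) Let $\phi^{(e)}_{X/A}\colon{F_{X/A}^{(e)}}_*\o_{X^e/A^e}^{1-p^e}\to\O_{X_{A^e}}$ be the relative trace. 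Since $H^0(X_{A^e},\O_{X_{A^e}})=k$, the morphism $a$ is $F$-split if and only if $H^0(X_{A^e},\phi^{(e)}_{X/A})$ is surjective for some $e$; pushing down to $A^e$ and using $\o_{X/A}\cong\O_X(-2)\otimes a^*\det\E$ together with the projection formula, this becomes the surjectivity of $H^0(A^e,\Theta^{(e)})$, where $\Theta^{(e)}:=(a_{A^e})_*\phi^{(e)}_{X/A}\colon S^{2(p^e-1)}\E\otimes(\det\E)^{1-p^e}\to\O_{A^e}$. As relative Frobenii and Grothendieck-duality traces commute with the flat morphism $\pi$, the analogous criterion for $a_{A'}$ involves $\pi^*\Theta^{(e)}$, and combining this with the projection formula one checks: $F$-splitness of $a_{A'}$ always follows from $F$-splitness of $a$; and the converse holds provided $\pi_*\O_{A'}$ has $\O_A$ as a direct summand whose complement has no nonzero global sections.

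Now the reduction runs as follows. If $p>2$, the chosen $\pi$ is étale, so $\pi^*\E\cong L\oplus\sigma^*L$ ($\sigma$ the Galois involution) and $X_{A'}\cong\mathbb P(L\oplus\sigma^*L)$ is $F$-split by Example \ref{eg:pn-bdl}; here $\pi_*\O_{A'}\cong\O_A\oplus\lambda$ with $\lambda$ a nontrivial $2$-torsion line bundle, so $H^0(A,\lambda)=0$ and (b) gives that $a$ is $F$-split. If $p=2$, then $\pi$ is the Frobenius isogeny; the canonical filtration of $\pi^*\pi_*L$ has graded pieces $L$ and $L\otimes\Omega^1_{A'}\cong L$, and $\pi^*\E$ is indecomposable by Theorem \ref{thm:facts_ell}(3) (transporting $\E$ to $A'$, this is the case $h=1$), so $\pi^*\E\cong L\otimes\E_{2,0}$ and $X_{A'}\cong\mathbb P(\E_{2,0})$. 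By Proposition \ref{prop:deg0}, $a_{A'}$ is $F$-split if and only if $A'$, equivalently $A$, is ordinary. When $A$ is ordinary one has $\pi_*\O_{A'}\cong\O_A\oplus\lambda'$ with $\lambda'$ a nontrivial $2$-torsion line bundle, so (b) upgrades the descent to an equivalence and $a$ is $F$-split; when $A$ is supersingular, $a_{A'}$ is not $F$-split, hence neither is $a$. Combining the three cases yields the proposition.

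The step I expect to be delicate is the bookkeeping in (b): correctly identifying $(a_{A^e})_*\phi^{(e)}_{X/A}$, verifying that it base-changes along $\pi$ to $\pi^*\Theta^{(e)}$ (including in the inseparable $p=2$ case, where the fibre product $X\times_A A'$ over $A^e$ becomes non-reduced), and checking that the splitting of $\phi^{(e)}_{X/A}$ is seen on $H^0((A')^e,\,\cdot\,)$ precisely through the distinguished summand $\O_A\subseteq\pi_*\O_{A'}$ — so that the descent holds exactly when the complementary summand is acyclic in degree zero, and genuinely fails, as it must, in the supersingular $p=2$ case, where $\pi_*\O_{A'}\cong\E_{2,0}$ contains $\O_A$ only as a subsheaf. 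Everything else is a citation (Atiyah's classification, Theorem \ref{thm:facts_ell}), or reduces to the already-established Proposition \ref{prop:deg0} and Example \ref{eg:pn-bdl}, or is a routine identification of dualizing sheaves and symmetric powers of projective bundles.
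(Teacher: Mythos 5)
Your argument is correct and follows essentially the same route as the paper: reduce along a degree-two cover (an \'etale double cover when $p>2$, the Frobenius isogeny when $p=2$) so that the pulled-back bundle becomes decomposable or lands in the case of Proposition \ref{prop:deg0} via Theorem \ref{thm:facts_ell}, then transfer $F$-splitness across the cover, with the supersingular $p=2$ case excluded by base change along Frobenius. Your descent step, packaged through the pushed-down trace $\Theta^{(e)}$, the projection formula and the summand $\O_A\subseteq\pi_*\O_{A'}$ with $H^0$-trivial complement, is an equivalent reformulation of the paper's splitting of $\O_{X_{A^e}}\to{(\pi^{(e)})_X}_*\O_{X'_{A'^e}}$ (\'etale trace for $p>2$, ordinarity of $A$ for $p=2$).
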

\begin{proof}
We first prove that $a$ is $F$-split, assuming that $A$ is ordinary or $p>2$. 
When $p>2$, we take the \'etale cover $\rho:A'\to A$ of degree two 
corresponding to a torsion line bundle $\mathcal L$ of order two.
Then 
$$
{\rho_{A'}}_*\O_{A'\times_{A}A'}
\overset{\textup{{\tiny flatness of $\rho$}}}{\cong}\rho^*\rho_*\O_{A'}
\overset{\textup{{\tiny definition of $\rho$}}}{\cong}\rho^*(\O_A\oplus\mathcal L)
\overset{\textup{{\tiny definition of $\rho$}}}{\cong}\O_{A'}\oplus\O_{A'}.
$$ 
Hence $A'\times_A A'$ is a disjoint union of two copies of $A'$.
By Theorem \ref{thm:facts_ell} (1) and (2), we see that 
$\rho_*\mathcal M\cong\mathcal E$ for a line bundle $\mathcal M$ on $A'$ of degree 1. 
Then, 
$$
\rho^*\mathcal E\cong \rho^*\rho_*\mathcal M \cong {\rho_{A'}}_*\mathcal M_{A'} \cong \mathcal M\oplus\mathcal M.
$$
Therefore, $X':=X_{A'}\cong \mathbb P(\mathcal M\oplus\mathcal M)$ is $F$-split over $A'$. 
We now have the following commutative diagram: 
$$
\xymatrix@R=25pt@C=25pt{ {X'}^{e} \ar[r] \ar[d]_{F_{X'/A'}^{(e)}} & X^e \ar[d]^{F_{X/A}^{(e)}} \\ 
{X'}_{{A'}^e} \ar[r]_{(\rho^{(e)})_X} & X_{A^e}. }
$$
This induces the commutative diagram of $\O_{X_{A^e}}$-modules 
$$
\xymatrix@R=25pt@C=25pt{\O_{X_{A^e}}\ar[r] \ar[d] \ar[dr] & {(\rho^{(e)})_X}_*\O_{{X'}_{{A'}^e}} \ar[d] \\
{F_{X/A}^{(e)}}_*\O_{X^e} \ar[r] & {(\rho^{(e)})_X}_*{F_{X'/A'}^{(e)}}_*\O_{{X'}^e}. } 
$$
By the above argument, the right vertical morphism splits.
Since $p\nmid\deg\rho$, the upper horizontal morphism splits, 
so the diagonal one also splits, 
and hence so does the left vertical one, i.e., $a:X\to A$ is $F$-split. 
When $p=2$ and $A$ is ordinary, Theorem~\ref{thm:facts_ell}~(3) tells us that $F_A^*\mathcal E\in\mathcal E_A(2,2)$. 
Proposition~\ref{prop:deg0} then shows that $a_{A^1}:X_{A^1}\to A^1$ is $F$-split.
Replacing $\rho$ by $F_A$, we can prove the assertion by the same argument as the above. 

Next, we assume that $p=2$ and $A$ is supersingular. 
Theorem~\ref{thm:facts_ell}~(3) then shows ${F_A}^*\mathcal E\in\mathcal E_A(2,2)$. 
Hence, Proposition~\ref{prop:deg0} tells us that $a_{A^1}:X_{A^1}\to A^1$ is not $F$-split. 
This requires that $a:X\to A$ is not $F$-split, which completes the proof.
\end{proof}
\bibliographystyle{abbrv}
\bibliography{ref.bib}
\end{document}